\newtheorem*{genericthm*}{\thistheoremname}
\newcommand{\thistheoremname}{???}
\newcounter{genericthm}
\newenvironment{namedthm*}[1]
  {\renewcommand{\thistheoremname}{#1}%
   \refstepcounter{genericthm}%
   \begin{genericthm*}}
  {\end{genericthm*}}
\newtheorem{thm}{Theorem}[section]
\newtheorem{lem}[thm]{Lemma}
\newtheorem{prop}[thm]{Proposition}
\newtheorem{cor}[thm]{Corollary}
\theoremstyle{definition}
\newtheorem{defn}[thm]{Definition}
\theoremstyle{remark}
\newtheorem{ex}[thm]{Example}
\newtheorem{rem}[thm]{Remark}
\begin{document}
\title{Hilbert transforms on Coxeter groups and groups acting on buildings}
	
\author[Lu and Xia]{Xiao-Qi Lu \and Runlian Xia}
	
\address[Xiao-Qi Lu]{School of mathematics and Statistics, University of Glasgow, University Avenue, Glasgow G12 8QQ, UK}
\email{Xiaoqi.Lu@glasgow.ac.uk}

\address[Runlian Xia]{School of mathematics and Statistics, University of Glasgow, University Avenue, Glasgow G12 8QQ, UK}
\email{Runlian.Xia@glasgow.ac.uk}

\keywords{von Neumann algebras, noncommutative Cotlar identities, Coxeter groups, buildings satisfying the nested condition}

\subjclass[2010]{Primary: 46B40, 46L05, 47C15}

\begin{abstract}
In this paper, we study Hilbert transforms and their boundedness on $L_p$-spaces associated with Coxeter groups and groups acting on buildings. We establish new models for Hilbert transforms on these groups
through the geometric objects they act on, and we show that these Hilbert transforms satisfy a Cotlar identity which was developed in earlier work of Mei and Ricard, and that of  Gonz\'alez-P\'erez, Parcet and the second named author, thus conclude the $L_p$-boundedness. We manage to solve the problem firstly for the case when the  Coxeter group satisfies a certain nested condition, and then extend it to any finitely generated Coxeter groups and groups that admit actions on buildings satisfying the nested condition. 
Our results give the first example of groups with property (F$\mathbb{R}$) on which $L_p$-bounded Hilbert transforms can be defined, and generalize the work of Gonz\'alez-P\'erez, Parcet and the second named author for groups acting on simplicial trees. 
\end{abstract}

\maketitle

\section*{introduction}
Fourier multipliers and their boundedness on  $L_p(\mathbb{R}^n)$ $(n\in \mathbb{N}, 1<p<\infty)$ have been playing a prominent role in harmonic analysis. 
One of the most important Fourier multipliers  is the Hilbert transform on $\mathbb{R}$ (or directional Hilbert transforms in the higher dimensional case). Hilbert transforms are well-known for their use in describing the boundary behaviour of the harmonic conjugate on the upper half-plane, and the boundedness of them on $L_p(\mathbb{T})$ is equivalent to the  convergence of Fourier series in the same space. Given $f\in L_p(\mathbb{R})$ and any $x\in \mathbb{R}$, the Hilbert transform is defined as
$$Hf(x) := \frac{1}{\pi}\lim_{\varepsilon\to 0^+}\int_{|x-y|>\varepsilon} \frac{f(y)}{x-y} dy.$$
By taking the Fourier transform, the Hilbert transform can also be viewed as a  Fourier multiplier \cite[\S 5.2]{king2009hilbert}, i.e. $$(Hf)^{\wedge} (t) = m(t)\widehat{f}(t) = -i {\rm sgn}(t)\widehat{f}(t).$$
M. Riesz showed the $L_p$-boundedness ($1<p<\infty$) of the Hilbert transform in 1928, by using Green's theorem as a key step (see \cite[\S 7.1]{king2009hilbert}). In 1995, Cotlar used a fairly simple proof in \cite{cotlar1955unified} to prove the same result. He showed that $Hf$ satisfies the following \textit{Cotlar identity}:
\begin{equation}\label{equation;Cotlar-classical}
	(Hf)^2 = 2H(fHf) - H(Hf^2).
\end{equation}
Since $H$ is trivially bounded on $L_2(\mathbb{R})$ by the Plancherel formula, by induction, the $L_p$-boundedness follows for any $p = 2^{i}$ ($i\in \mathbb{N}$). Then the Riesz-Torin interpolation theorem and the duality imply the $L_p$-boundedness of the Hilbert transform for $1< p < \infty$.

The main goal of this paper is to study the analogue of Hilbert transforms on locally compact groups through their actions on geometric structures which admit half-space structures. More precisely, structures including Coxeter complexes and buildings. This establishes interesting interplays between geometry and  harmonic analysis. We show that  when considering the action of a Coxeter group on its Coxeter complex, one can define a corresponding natural Hilbert transform on the Coxeter group. In particular, when the  Coxeter group satisfies a certain nested condition relative to a generator, the Hilbert transform will give rise to an $L_p$-bounded Fourier multiplier. By the fact that  any finitely generated Coxeter groups admit  torsion-free subgroups of finite index which satisfy the nested condition, we extend the boundedness result to any finitely generated Coxeter groups. 
Based on our construction on Coxeter groups, we also generalize it to the case of groups acting on buildings satisfying the nested condition, which will include more examples of groups, such as graph products of groups. 

The study of Fourier multipliers  on groups and their boundedness on the associated non-commutative $L_p$-spaces arises naturally as a non-commutative counter-part of the classical theory. This line of research dates back to the work of Haagerup \cite{Haag79} where he investigated the metric approximation property of the reduced $C^*$-algebras and von Neumann algebras of free groups using Fourier multipliers. Following Haagerups' work, 
completely bounded Fourier multipliers on simple Lie groups have been studied in the characterization of completely bounded approximation property (CBAP) of their group von Neumann algebras in \cite{CowlingHaag89, DecanHaag85} by Cowing, de Canni\`ere and Haagerup. As the non-commutative $L_p$-spaces associated with group von Neumann algebras are operator spaces, one can consider the CBAP of these spaces as well. 
One remarkable result in direction was given in the work of Lafforgue and de la Salle \cite{LafdelaSalle11} where they have shown the non-commutative $L_p$-spaces on ${\rm SL}_n(\mathbb{R})$ ($n\geq 3$) fail to have CBAP when $p$ is larger than a number depending on $n$. This result is interesting in view of the $W^*$-superrigidity problem proposed by Connes in \cite{connes1982classification} using harmonic analysis approach. In recent years, H\"ormander-Mikhlin type $L_p$-bounded Fourier multipliers on higher rank simple Lie groups have been extensively studied by Conde-Alonso, de la Salle, González-Pérez, Parcet, Ricard, and Tablate in \cite{ParRidelaS22, ConGonParTab24}, and a non-commutative Calder\'on-Torchinsky theorem has been developed on semi-simple Lie groups by Caspers in \cite{Cas23}. Inspired by the importance of the Hilbert transforms  in constructing an approximate identity on $L_p(\mathbb{R}^n)$, constructing Hilbert transforms for new classes of groups might lead to new $L_p$-bounded Fourier multipliers which could shed light on the CBAP for non-commutative $L_p$-spaces for these groups. 


For a locally compact group $G$, consider its left regular representation $\lambda : G \rightarrow  B(L_2(G))$. It induces a non-degenerated representation of $L_1(G)$ on $L_2(G)$ which we also denote by $\lambda$ without ambiguity. We write $\lambda(f)= \int_{G}f(g)\lambda (g)d\mu(g)$, for $f\in L_1(G)$.  
The \textit{group von Neumann algebra} $\mathcal{L}G\subseteq B(L_2(G))$ of $G$ is defined as the weak closure of $\lambda(L_1(G))$. 
When $G$ is separable and unimodular,  there is a normal, faithful and semifinite trace on $\mathcal{L}G$, with respect to which the non-commutative $L_p$-spaces $L_p(\mathcal{L}G)$ for $1\le p \le\infty$ are defined (see \cite{pisier2003non}). 
Let $m: G\to \mathbb{C}$ be a bounded function and $C_c(G)$ be the space of all continuous functions with compact support. 
The Fourier multiplier with symbol $m$ is an operator 
 $T_m: \lambda(C_c(G))\subseteq \mathcal{L}G \to \mathcal{L}G$  given by the linear extension of 
\begin{equation}\label{equation; Fourier transform on groups}
	T_m(\lambda(f)) := \int_G m(g)f(g)\lambda (g) \,dg.
\end{equation}
In the classical case, we can use Calder\'{o}n-Zygmund theory to deal with the $L_p$-boundedness of Fourier multipliers. There have been many works on the counterpart of Calder\'{o}n-Zygmund theory in the non-commutative setting, for instance, \cite{CCAP22, JMPX21, Parcet09}. 
However, the generalization in the group von Neumann algebras setting is not yet known. On the frequency side, the H\"ormander-Mikhlin theory for Fourier multipliers developed in \cite{ParRidelaS22, ConGonParTab24} relies on the fact that the groups studied there admit Kazhdan Property (T). So far, no H\"ormander-Mikhlin theory is known for groups which do not have Kazhdan Property (T). 
For Hilbert transforms, 
Mei and Ricard initiated the study of a non-commutative analogue of  (\ref{equation;Cotlar-classical}) and constructed Hilbert transforms on amalgamated free products which satisfy the non-commutative Cotlar identity in \cite{mei2017free}. Later it was pointed out in \cite{gonzalez2022non-commutative} by Gonz\'alez-P\'erez, Parcet and the second named author that  
there exists another equivalent characterization of the Cotlar identity defined in \cite{mei2017free} in terms of the symbol of the Hilbert transform. In particular, in the case of $\mathbb{R}$, the classical Cotlar identity \eqref{equation;Cotlar-classical} can be reformulated as 
\begin{equation}\label{equation;Cotlar-m-classical}
	(m(-s)-m(t))(m(s+t) - m(s)) = 0, \quad \text{ for  a.e. } s, t \in \mathbb{R}.
\end{equation}
A main theorem in \cite{gonzalez2022non-commutative} which we list below shows that 
if  one can define a Fourier multiplier  on a group whose symbol satisfies a Cotlar identity, then it is bounded on $L_p({\mathcal{L} G})$ for any $1< p< \infty$. 

\begin{namedthm*}{Theorem CI}(\cite[Theorem A]{gonzalez2022non-commutative})
\label{prop; Lp-bounded-introduction}
Let $G$ be a locally compact unimodular group, $G_0 \subseteq G$ a closed subgroup and $m:G \rightarrow \mathbb{C}$ a left $G_0$-invariant bounded function. If $m$ satisfies 
\begin{equation}\label{eq: Cotlar}
\tag{$\widehat{\text{Cotlar}}$}
 (m(g)- m(h))(m(g^{-1}h) - m(g^{-1})) = 0, \quad \text{ for a.e. } g\in G\setminus G_0, h\in G,  
\end{equation}
then for any $1< p< \infty$, one has
	$$\|T_m:L_p({\mathcal{L} G})\to L_p(\mathcal{L} G)\|\lesssim \left(\frac{p^2}{p-1}\right)^\alpha \| m\|_\infty  \quad \text{ with } 
	\alpha = \log_2(1+\sqrt{2}).
    $$
\end{namedthm*}
A geometric model was given for Hilbert transforms on groups through their non-trivial actions on tree-like structures ($\mathbb{R}$-trees) in \cite{gonzalez2022non-commutative}, and it was also shown that those Hilbert transforms satisfy \eqref{eq: Cotlar}. 
In the following we summarize the model in \cite{gonzalez2022non-commutative} for the case when the tree-like structure is a simplicial tree.  Let $G \curvearrowright X$ be an isometric action on a tree, $x_0 \in X$ a vertex
  and $\widetilde{m}: X \to \mathbb{C}$ a bounded  function such that
  \begin{enumerate}
    \item[(i)] $\widetilde{m}$, restricted to $X \setminus \{x_0\}$, is constant along connected components.
    \item[(ii)] $\widetilde{m}$, is invariant under the action ${\rm Stab}_{x_0} \curvearrowright  X \setminus \{x_0\}$.
  \end{enumerate}
  Then the symbol $m: G \to \mathbb{C}$ of the Hilbert transform is defined as
  \begin{equation}\label{eq: Multiplier on trees}
    m(g) = \widetilde{m}(g \cdot x_0).  
  \end{equation}
This model includes many interesting groups, including left-orderable groups and the fundamental groups of graph of groups such as amalgamated free products and HNN extensions. However, the geometric model used in  \cite{gonzalez2022non-commutative} cannot deal with groups with property (F$\mathbb{R}$),
i.e. any action of the group on  $\mathbb{R}$-trees has a global fixed point. For instance, many Coxeter groups and all the groups with Kazhdan’s property (T) have  property (F$\mathbb{R}$). 
This is partially the reason why we are interested in Coxeter groups, and more generally, groups acting on buildings in this paper. To deal with these groups, new models are needed.

 \subsection*{Hilbert transforms on Coxeter groups}
 Coxeter groups with every pair of generators  
satisfying a non-trivial relation have property (F$\mathbb{R}$) (an application of Helly’s theorem in $\mathbb{R}$-trees, see \cite[Section 1.2]{chatterji2010kazhdan}).  However, every Coxeter group acts naturally on its Coxeter complex and  Davis complex (for more precise definitions, see Section \ref{section;Cotlar-proof-Coxeter}) which provide rich geometric and topological structures for us to study Hilbert transforms.

Coxeter introduced Coxeter groups as a generalization of reflection groups, and he completely classified finite Coxeter groups in 1930's. The presentation of infinite Coxeter groups was systematically studied by Tits in 1960's. These groups have connections to many areas of mathematics including graph theory and Lie theory.  Let $S$ be a finite set. Let ${\mathrm{m}} : S\times S\to \{1,2,\ldots ,\infty\}$ be a map such that  $\mathrm{m}_{st}=1$ if $ s=t$, $\mathrm{m}_{st}\ge 2$ if $s\ne t$, and $\mathrm{m}_{st} = \mathrm{m}_{ts}$, where $\mathrm{m}_{st}=\mathrm{m}(s,t)$ for $s, t\in S$. 
We call the matrix $M :=(\mathrm{m}_{st})_{S\times S}$ the \textit{Coxeter matrix}.
Given a Coxeter matrix $M :=(\mathrm{m}_{st})_{S\times S}$, the corresponding \textit{Coxeter group} $W$ is a group which admits the following presentation: 
$$W := \langle S:  (st)^{\mathrm{m}_{st}} = 1\rangle.$$
Note that when $\mathrm{m}_{st}=\infty$, it means that no relation of the form $(st)^{r}=1$ for any integer $r\geq 2$ should be imposed. We call the pair $(W, S)$ a \textit{Coxeter system} and the relation $(st)^{\mathrm{m}_{st}} = 1$ in the representation of $W$ the \textit{braid relation}. It is easy to check that for any $s,t\in S$, $s^2=1$, and $(st)^{\mathrm{m}_{st}} = 1$ if and only if $(ts)^{\mathrm{m}_{st}} = 1$.

Every Coxeter group admits a geometric representation called the \textit{Tits representation} on a  real vector space, and every generator corresponds to a reflection on that vector space. Let $V$ be a real vector space with basis $(e_s)_{s\in S}$. Define a bilinear form on $V$ by $$\langle e_s, e_t \rangle = -\cos (\frac{\pi}{\mathrm{m}_{st}}).$$
Note that when $\mathrm{m}_{st} = \infty$, set $\langle e_s, e_t \rangle = -1$.
The Tits representation $\rho: W \longrightarrow {\rm GL}(V)$ is defined by 
$$
\rho(s) x= x-2\langle e_s, x \rangle e_s, \quad \;\; \text{ for any }s\in S \text{ and } x\in V.
$$
For any $s\in S$, $\rho(s)$ is a reflection on $V$, it takes $e_s$ to $-e_{s}$ and fixes the hyperplane $H_s=\{x\in V: \langle e_s, x \rangle=0\}$. One can check that the representation given above is a faithful representation, in other words, we can identify $W$ as a subgroup of ${\rm GL}(V)$ generated by reflections.

Coxeter complexes and Davis complexes can be embedded in the real vector space $V$ constructed above. In these complexes, we have a type of  intrinsic geometric structure called walls (hyperplanes in $V$) which split the complex into two roots (half-spaces). Now we give a rough explanation for the model of our Hilbert transforms on Coxeter groups using the root structure, and the precise definition can be found in Section \ref{section: Cotlar}. 
Given a generator $s\in S$, there is an associate wall $H_s$ from which we get two roots $H_s^+$ and  $H_s^-$. Note that every element in Coxeter groups is in one-to-one correspondence with a chamber in the Coxeter complex and Davis complex, so Coxeter groups admit natural actions on these complexes which map chambers to chambers.  If we denote the chamber in the Coxeter complex corresponding to the unit in $W$ by $K$, then we define a multiplier $m_W^s: W \rightarrow \mathbb{C}$ by
\begin{equation*}
		m_W^s(g) := \left\{\begin{array}{l}
			1, \quad \text{if } \; gK\in H_s^{+}\\
			-1, \quad \text{if } \; gK\in H_s^{-}.
		\end{array} \right.
\end{equation*}
We show that whenever the Coxeter group 
satisfies a nested condition relative to $s$ (precise definition will be given in Definition \ref{Defn;Nested-condition}), the multiplier defined above will satisfy \eqref{eq: Cotlar}.

\begin{namedthm*}{Theorem A}\label{theoremA}
    Let $(W,S)$ be a Coxeter system. Assume there exists $s\in S$ such that $W$ satisfies the nested condition relative to $s$. 
   Set ${\rm Stab}_{ H_s^+} = \{g\in W: g\cdot H_s^+ = H_s^+\}$. 
Then the following statements hold:
\begin{enumerate}
    \item [(i)] The multiplier $m_W^s$ satisfies \eqref{eq: Cotlar}, i.e.
    $$(m_W^s(g) - m_W^s(h))(m_W^s(g^{-1}h) -m_W^s(g^{-1})) = 0, \text{ for any }g\in W\setminus {\rm Stab}_{ H_s^+} \text{ and } h\in W.$$

 \item [(ii)] The multiplier $m_W^s$ is left ${\rm Stab}_{ H_s^+}$-invariant, i.e. for any $h'\in {\rm Stab}_{ H_s^+}$ and $h\in W$, we have $m_W^s(h'h) = m_W^s(h)$. 
\end{enumerate}
 Therefore, $T_{m_W^s}$ gives a Fourier multiplier bounded on $L_p(\mathcal{L} W)$ for $1< p< \infty$.
\end{namedthm*}

We will give a characterization in terms of the braid relation for the Coxeter groups which satisfy the nested condition in Proposition \ref{prop;W-satisfies-NestC-equi-to-WRA}. In particular, this class includes all the right-angled Coxeter groups and the word hyperbolic Coxeter group $\mathrm{PGL}(2,\mathbb{Z})$. However, most of the  Coxeter groups which have property (F$\mathbb{R}$) do not satisfy the nested condition. 
Therefore, to deal with Coxeter groups in full generality, we will need new techniques. 
From the Tits representation, we see that Coxeter groups can be viewed as subgroups of $\mathrm{GL}(n,\mathbb{R})$. Then by the lemma of Selberg \cite{selberg1960discontinuous}, every Coxeter group $W$ admits a torsion-free subgroup $W_0$ of finite index. 
It turns out that an arbitrary normal and torsion-free subgroup $W_0$ (without loss of generality we can assume $sW_0s=W_0$ for a generator $s$) will also satisfy the nested condition relative to $s$ with respect to roots in Davis complexes as required in \ref{theoremA}. Then $L_p$-bounded Hilbert transforms can be defined on these finite index subgroups. Extending these Hilbert transforms to the whole group, we will be able to contruct $L_p$-bounded Hilbert transforms on any Coxeter group.  
Suppose the index of $W_0$ in $W$ is $k\in \mathbb{N}$ and $W= \mathop{\sqcup}\limits_{i=1}^kw_iW_0$, where $w_i\in W$ for $i = 1,\ldots ,k$. For any $g= w_ih$  with $h\in W_0$, define $m$ on $W$ by
\begin{equation*}
		m(g) := \left\{\begin{array}{l}
			1, \quad \text{if} \; hK\in H_s^+\\
			-1, \quad \text{if} \; hK\in H_s^-,
		\end{array}\right.
\end{equation*}
where $H_s^\pm$ are the roots associated to $s$ and $K$ is the fundamental chamber in the Davis complex of $W$. See the discussion right above and after Lemma \ref{cor;torsion-free-cotlar} for the detailed definition of the multiplier above.
\begin{namedthm*}{Theorem B}\label{theoremB}
Let $(W,S)$ be a Coxeter system. The multiplier $m$ defined above  gives a Fourier multiplier $T_m$ bounded on $L_p(\mathcal{L} W)$ for $1< p< \infty$.
\end{namedthm*}

\subsection*{Hilbert transforms on groups acting on buildings}
Coxeter groups are also closely related to buildings (see Subsection \ref{subsec: buildings} or \cite{abramenko2008buildings, ronan2009lectures, thomas2018geometric} for further details on buildings). 
Roughly speaking, we say that a building is of type $(W,S)$ for a Coxeter system $(W,S)$ if the building is formed by gluing subcomplexes (apartments) together which are isomorphic to the Coxeter complex of $W$. On the other hand, buildings can also be seen as a non-empty set $\Delta$ equipped with a Weyl distance function $\delta: \Delta \times \Delta\rightarrow W$ satisfying  certain properties (see Definition \ref{def: Buildings}). Elements in $\Delta$ are called chambers. 
Now we consider a locally compact group $G$ that admits actions on $(\Delta, \delta)$ preserving the distance $\delta$. Given a chamber $C_0$, define a function  $m_G^u$ on $G$ by
\begin{equation*}
    m_G^u(g) := \left\{\begin{array}{l}
		1, \quad \text{if} \; gC_0\in \phi(H_u^+)\\
		-1, \quad \text{if} \;  gC_0\in \phi(H_u^-) 
	\end{array} \right.
\end{equation*}
where $\phi$ is an isometry from the Coxeter complex into the building whose image contains $C_0$ and $gC_0$ such that $\phi(K) = C_0$.
We state our third main result in the following. 
 
\begin{namedthm*}{Theorem C}\label{theoremC}
Let $(\Delta, \delta)$ be a building of type $(W,S)$, where $W$ satisfies the nested condition relative to $u\in S$. Let $G$ be a unimodular locally compact group acting on $\Delta$ isometrically. Fix a chamber $C_0\in \Delta$,  and  set
 $G_0 := \{g\in G: \delta(C_0, gC_0)\in W_{T_u}\}$ where $T_u:= \{s\in S: \mathrm{m}_{su} =2\}$. 
Then  the following statements hold:
\begin{enumerate}
    \item[(i)] $G_0$ is a subgroup of $G$, and $m_G^u$ satisfies \eqref{eq: Cotlar}, i.e.  
    $$(m_G^u(g) - m_G^u(h))(m_G^u(g^{-1}h) -m_G^u(g^{-1})) = 0, \text{ for any }g\in G\setminus G_0 \text{ and } h\in G.$$
 \item[(ii)] The multiplier $m_G^u$ is left $G_0$-invariant. 
\end{enumerate}
Therefore $T_{m_G^u}$ gives a bounded Fourier multiplier on $L_p(\mathcal{L} G)$ for any $1< p< \infty$. 
\end{namedthm*}

As an application of the theorem above, we obtain Hilbert transforms on any graph products of groups, which we will illustrate in Subsection \ref{subsection3.5}.

\section{Preliminaries}\label{section;Cotlar-proof-Coxeter}

\subsection{Non-commutative $L_p$-spaces}
 Let $(\mathcal{M},\tau)$ be a von Neumann algebra with a \textit{normal, faithful} and \textit{semifinite trace} (simply denoted by \textit{n.f.s. trace}) $\tau$, and $\mathcal{S}_{\mathcal{M}}$ be the linear span of the set 
$$\{x\in \mathcal{M}_+: \tau(s(x))<\infty\},$$ 
where $s(x)$ is the support projection of $x$ (i.e. the smallest projection such that $s(x)x = x$). The non-commutative $L_p$-space $L_p(\mathcal{M},\tau)$ ($1\le p<\infty$) is defined as the completion of $\mathcal{S}_{\mathcal{M}}$ under $\|\cdot\|_p$, where 
$$\|x\|_p := \tau(|x|^p)^{\frac{1}{p}}.$$
	When $p = \infty$, we write $L_{\infty}(\mathcal{M}) := \mathcal{M}$. 
  We refer interested readers to \cite{Hiai, pisier2003non, Terp81} for more detailed information about the theory of non-commutative integration and non-commutative $L_p$-spaces.

Let $G$ be a separable locally compact group endowed with a \textit{left Haar measure} $\mu$. In this paper we will always assume our groups to be separable, and we will omit this assumption in the sequel. 
Let $\lambda$ be the \textit{left regular representation}  of $G$ on $L_2(G)$ defined as $\lambda(s)(f)(t) := f(s^{-1}t)$ for every $f\in L_2(G)$ and $s,t\in G$.
It induces a non-degenerated representation of $L_1(G)$ on $L_2(G)$ which we also denote by $\lambda$ without ambiguity and $\lambda: L_1(G)\to B(L_2(G))$ is given by 
	$$\lambda(f)= \int_{G}f(g)\lambda(g)d\mu(g).$$ 
    The \textit{group von Neumann algebra} $\mathcal{L}G\subseteq B(L_2(G))$ is defined as the weak closure of $\lambda(L_1(G))$. 
When $G$ is unimodular, there is a n.f.s. trace on $\mathcal{L}G$ called the \textit{Plancherel trace} (see for instance \cite[Theorem 7.2.2 \& Propositon 7.2.8]{eilers2018c}).
\begin{defn}
	The \textit{Plancherel trace} $\tau: \mathcal{L}G_+\to [0,\infty]$ is define as 
	$$\tau(\lambda(f)^*\lambda(f)) = 
		\|f\|_2 = \int_{G}|{f}(g)|^2d\mu(g),
	$$
 where $ f\in L_2(G)$ such that  $g\mapsto f*g$ is bounded on $L_2(G)$; and $\tau(\lambda(f)^*\lambda(f))=\infty$ otherwise.
\end{defn}
The above trace satisfies the following \textit{Plancherel identity}:
$$\tau(\lambda (f)) = f(1),$$
where $f\in C_c(G)*C_c(G)$. We denote the non-commutative $L_p$-space of $\mathcal{L}G$ associated with the Plancherel trace simply by $L_p(\mathcal{L}G)$.

\subsection{Non-commutative Cotlar identities}
This part is a brief introduction to the non-commutative Cotlar identities (see \cite{gonzalez2022non-commutative} and \cite{mei2017free}).
To that end,  we will first give the definition of conditional expectations in von Neumann algebras. 

	Let $\mathcal{M}$ be a von Neumann algebra equipped with a n.f.s. trace $\tau$ and $\mathcal{N}$ be a von Neumann subalgebra such that  $\tau_{\mathcal{N}} := \tau|_{\mathcal{N}}$ is still a n.f.s. trace on $\mathcal{N}$. There exists a \textit{conditional expectation} $E: \mathcal{M}\to \mathcal{N}$ which is a linear and positive map satisfying
	\begin{enumerate}
	    \item [(i)] (unital) $E|_{\mathcal{N}} = id_{\mathcal{N}}$;

           \item [(ii)] (projection) $E\circ E =E$;

           \item [(iii)] ($\mathcal{N}$-$\mathcal{N}$-bilinear) $E(y_1xy_2) = y_1E(x)y_2$ for $x\in \mathcal{M}$ and $y_1, y_2\in \mathcal{N}$;
           
  	    \item [(iv)] (trace-preserving) $\tau\circ E = \tau_{\mathcal{N}}$.
	\end{enumerate}
 
The next definition comes from \cite[Definition 1.1]{gonzalez2022non-commutative} or \cite[Proposition 3.2(iv)]{mei2017free}.
\begin{defn}
	Let $\mathcal{M},\,
 \mathcal{N}$ be as above and $E: \mathcal{M}\to \mathcal{N}$ be a conditional expectation. Denote $E^{\perp} = id - E$. A linear operator $T$ on $\mathcal{M}\cap L_2(\mathcal{M})$  satisfies the \textit{non-commutative Cotlar identity relative to} $\mathcal{N}$ if and only if
	\begin{equation}\label{equation; non-commutative Cotlar identity}
		\tag{Cotlar}
		E^{\perp}(T(f)T(f)^*) = E^{\perp}\left[T(fT(f)^*) + T(fT(f)^*)^* - T(T(ff^*)^*)\right],
	\end{equation}
	for any $f\in \mathcal{M}\cap L_2(\mathcal{M})$.
\end{defn}
When we consider group von Neumann algebras and if $T$ is a Fourier multiplier $T_m$ with symbol $m$, it was shown in \cite[Theorem 1.4]{gonzalez2022non-commutative} that the Cotlar identity above admits an alternative formulation with respect to the symbol $m$. Let $G$ be a  locally compact
 unimodular group, $G_0$ be a closed subgroup of $G$. Consider $\mathcal{M} = {\mathcal{L} G}$ and $\mathcal{N}= {\mathcal{L} G_0}$.
Note that the Plancherel trace on ${\mathcal{L} G_0}$ is the restriction of the Plancherel trace on ${\mathcal{L} G}$. The conditional expectation $E: {\mathcal{L} G}\to {\mathcal{L} G_0}$ is given by 
$$E(\lambda (f)) := \int_{G_0}{f}(g)\lambda(g)d\mu(g).$$ 
In this case, \eqref{equation; non-commutative Cotlar identity} holds if and only if \eqref{eq: Cotlar} holds.


 \subsection{Coxeter groups, Coxeter complexes and  Davis complexes} 
 A Coxeter group $$W := \langle S:  (st)^{\mathrm{m}_{st}} = 1\rangle$$
 is called \textit{affine} if all eigenvalues of the matrix $(-2\cos \frac{\pi}{\mathrm{m}_{st}})$ are non-negative; and is \textit{word hyperbolic} in the sense of Gromov \cite{gromov1987hyperbolic} if there is no $\mathbb{Z}\times\mathbb{Z}$ subgroup of $W$.
Every Coxeter group can be determined by its \textit{Coxeter diagram} which is defined as a graph whose set of vertices is $S$ and the set of edges is $\{\{s,t\}: \mathrm{m}_{st}\ge 3\}$, and  when $\mathrm{m}_{st}>3$ the edge $\{s,t\}$ is labeled by the integer $\mathrm{m}_{st}$. A Coxeter group is called \textit{irreducible} if the related Coxeter diagram is connected.

According to the classification of Coxeter groups, irreducible finite and affine Coxeter groups  have 12 types and 10 types respectively (see \cite[Section C.1 or Table 6.1]{davis2012geometry}). The followings are some typical examples of Coxeter groups, where the first example is a type of irreducible finite Coxeter groups and the second one is a type of irreducible affine Coxeter groups.
\begin{ex}[The Coxeter group of type $A_n$]
	Let $S = \{s_i: i=1,..,n\}$. The Coxeter group generated by $S$ with the following relations is called of type $A_n$:
	\begin{itemize}
		\item $s_i^2 = 1$ for $1\le i\le n$;
		\item $s_is_{i+1}s_i = s_{i+1}s_is_{i+1}$ for $1\le i \le n-1$;
		\item $s_is_j = s_js_i$ for $|i-j|\ge 2$.
	\end{itemize}
	Here is the Coxeter diagram of type $A_n$:
\begin{center}
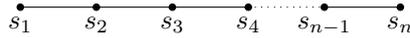

	\begin{tikzpicture}
		\coordinate[label = below: $s_1$] (s) at (0,0);
		\coordinate[label = below: $s_2$] (t) at (1,0);
		\coordinate[label = below: $s_3$] (r) at (2,0);
		\coordinate[label = below: $s_4$] (k) at (3,0); 
		\coordinate[label = below: $s_{n-1}$] (h) at (4,0);
		\coordinate[label = below: $s_n$] (g) at (5,0);
		\node at (s)[circle, fill, inner sep=1pt]{};
		\node at (t)[circle, fill, inner sep=1pt]{};
		\node at (r)[circle, fill, inner sep=1pt]{};
		\node at (k)[circle, fill, inner sep=1pt]{};
		\node at (h)[circle, fill, inner sep=1pt]{};
		\node at (g)[circle, fill, inner sep=1pt]{};

		\draw (s)--(t) node[above, midway]{};
		\draw (t)--(r) node[above, midway]{};
		\draw (r)--(k) node[above, midway]{};
		\draw [dotted] (k)--(h) node[above, midway]{};
		\draw (h)--(g) node[above, midway] (line){};
	\end{tikzpicture}
    \captionof{figure}{The Coxeter diagram of type $A_n$.}
	\end{center}
\end{ex}
It is easy to see that $A_n = \Sigma_{n+1} (n\ge 1)$, where $\Sigma_{n+1}$ is the symmetric group.

\begin{ex}[The Coxeter group of type $\widetilde{A}_n$]\label{def; A2-tilde}
	Let $S = \{s_i:i=1,\ldots ,n+1 \}$. The affine Coxeter group of type $\widetilde{A}_n$ is generated by $S$ with the following relations:
    \begin{itemize}
		\item $s_i^2 = 1$ for $1\le i\le n+1$;
		\item $s_is_{i+1}s_i = s_{i+1}s_is_{i+1}$ for $1\le i \le n$;
		\item $s_is_j = s_js_i$ for $|i-j|\ge 2$ except the case $s_1s_{n+1} = s_{n+1}s_1$;
        \item $s_1s_{n+1}s_1 = s_{n+1}s_1s_{n+1}$.
	\end{itemize}
    The Coxeter diagram of type $\widetilde{A}_n$ is as follows:

    \begin{center}
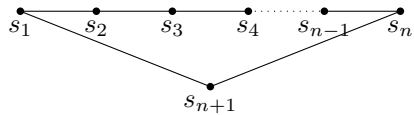

    \begin{tikzpicture}
		\coordinate[label = below: $s_1$] (s) at (0,0);
		\coordinate[label = below: $s_2$] (t) at (1,0);
		\coordinate[label = below: $s_3$] (r) at (2,0);
		\coordinate[label = below: $s_4$] (k) at (3,0); 
            \coordinate[label = below: $s_{n+1}$] (l) at (2.5,-1); 
		\coordinate[label = below: $s_{n-1}$] (h) at (4,0);
		\coordinate[label = below: $s_n$] (g) at (5,0);
		\node at (s)[circle, fill, inner sep=1pt]{};
		\node at (t)[circle, fill, inner sep=1pt]{};
		\node at (r)[circle, fill, inner sep=1pt]{};
		\node at (k)[circle, fill, inner sep=1pt]{};
            \node at (l)[circle, fill, inner sep=1pt]{};
		\node at (h)[circle, fill, inner sep=1pt]{};
		\node at (g)[circle, fill, inner sep=1pt]{};

		\draw (s)--(t) node[above, midway]{};
		\draw (t)--(r) node[above, midway]{};
		\draw (r)--(k) node[above, midway]{};
		\draw [dotted] (k)--(h) node[above, midway]{};
		\draw (h)--(g) node[above, midway] (line){};
            \draw (l)--(g) node[above, midway] (line){};
            \draw (s)--(l) node[above, midway] (line){};
	\end{tikzpicture}
    \captionof{figure}{The Coxeter diagram of type $\widetilde{A}_n$.} 
    \end{center}

\end{ex}

There is another type of Coxeter groups which will be of special interests to us, the right-angled Coxeter groups. 

\begin{ex} [Right-angled Coxeter groups]
Let $(W,S)$ be a Coxeter system. $W$ is called a \textit{right-angled Coxeter group} if $\mathrm{m}_{st} \in\{2,\infty\}$ for any $s\ne t\in S$.
\end{ex}
Note that every right-angled Coxeter group can be written as an amalgamated free product by \cite[Lemma 3.20]{green1990graph}.  Later we will show that we can define Hilbert transforms on a generalized version of right-angled Coxeter groups 
satisfying \eqref{eq: Cotlar}.

Now we recall some properties for reduced expressions of elements in Coxeter groups.
\begin{defn}
	Let $(W,S)$ be a Coxeter system. Denote the \textit{length} of a word $w\in W$ by $l(w)$, which is the minimum number $k$ such that one can write $w= s_1\ldots s_k$, where $s_i\ne s_{i+1}\in S$ for $ i\in \{1,\ldots ,k\}$. A word $w$ is called a \textit{reduced word} if $w$ can be expressed as $w = s_1\ldots s_{l(w)}$. We call $s_1\ldots s_{l(w)}$ a \textit{reduced expression} of $w$.
\end{defn}
\begin{defn}
	Let $(W,S)$ be a Coxeter system. An \textit{$M$-operation} on a word means one of the following steps:
	\begin{enumerate}
    \item [(a)]Delete a subword of the form $ss$ for any $s\in S$;	
 \item [(b)] Given $s\ne t\in S$ with $\mathrm{m}_{st}<\infty$, replacing a subword of length $\mathrm{m}_{st}$, $st\ldots $ by $ts\ldots $ via the braid relation $(st)^{\mathrm{m}_{st}}=1$.
	\end{enumerate}
	A word is called \textit{$M$-reduced} if it cannot be shortened by finite steps of $M$-operations. 
\end{defn}


\begin{prop}(\cite[Theorem 3.4.2]{davis2012geometry})\label{them; M-operation} Let $(W,S)$ be a Coxeter system.
	\begin{enumerate}
    \item [(i)] A word is reduced if and only if it is $M$-reduced.
	\item [(ii)] Let $w\in W$ with $l(w) = n\in \mathbb{N}$.  $s_1\ldots s_n$ and $t_1\ldots t_n$ are reduced expressions of the same word $w$ if and only if one can transfer $s_1\ldots s_n$ to $t_1\ldots t_n$ by $M$-operations of type $(b)$. 
 \end{enumerate}
\end{prop}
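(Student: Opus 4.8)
The plan is to deduce both statements from the \emph{Exchange Condition}, which I would first extract from the Tits representation $\rho$ introduced above, and I would prove statement (ii) (the ``word property'') before the hard half of statement (i), since that half will rely on (ii).

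First I would set up the root system. Let $\Phi=\{\rho(w)e_s: w\in W,\ s\in S\}$, and call a root \emph{positive} (resp. \emph{negative}) if it is a nonnegative (resp. nonpositive) combination of the basis $(e_s)_{s\in S}$. The central geometric fact, which I would prove by induction on $l(w)$ using that $\rho(s)$ negates $e_s$ and permutes $\Phi^+\setminus\{e_s\}$, is the sign coherence $\Phi=\Phi^+\sqcup\Phi^-$ together with the length formula $l(w)=\#\{\alpha\in\Phi^+:\rho(w)^{-1}\alpha\in\Phi^-\}$; in particular $l(sw)<l(w)$ iff $\rho(w)^{-1}e_s\in\Phi^-$. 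From this I would read off the \emph{Exchange Condition}: if $w=s_1\cdots s_k$ is reduced and $l(sw)<l(w)$, then tracking the sign of the roots $\rho(s_i\cdots s_1)e_s$ locates an index where it flips, yielding $s s_1\cdots s_{i-1}=s_1\cdots s_i$ and hence $sw=s_1\cdots\widehat{s_i}\cdots s_k$; the right-handed version follows by symmetry.

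For statement (ii) I would argue by induction on $l(w)$ that any two reduced expressions $\mathbf{a}=a_1\cdots a_k$ and $\mathbf{b}=b_1\cdots b_k$ of $w$ are connected by type-$(b)$ moves. If $a_1=b_1=s$, then $a_2\cdots a_k$ and $b_2\cdots b_k$ are reduced expressions of $sw$ and I conclude by induction. If $a_1=s\ne t=b_1$, then both $s$ and $t$ are left descents of $w$, and the key sub-lemma is that this forces $\mathrm{m}_{st}<\infty$ together with the existence of a reduced expression of $w$ beginning with the alternating word $\underbrace{sts\cdots}_{\mathrm{m}_{st}}$. I would prove this via the parabolic decomposition $w=uv$ with $u\in W_{\{s,t\}}$, $v$ a minimal left-coset representative and $l(w)=l(u)+l(v)$: the left descents of $w$ inside $\{s,t\}$ coincide with those of $u$ in the dihedral group $W_{\{s,t\}}$, and in a dihedral group only the longest element has both generators as descents — which exists exactly when $\mathrm{m}_{st}<\infty$ and equals that alternating word. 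Granting the sub-lemma, let $\mathbf{c}$ be such a reduced expression and $\mathbf{c}'$ the result of one braid move on its length-$\mathrm{m}_{st}$ prefix; then $\mathbf{a}\sim\mathbf{c}$ and $\mathbf{b}\sim\mathbf{c}'$ by the common-first-letter case, while $\mathbf{c}\sim\mathbf{c}'$ by a single type-$(b)$ move, whence $\mathbf{a}\sim\mathbf{b}$.

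With (ii) in hand I would finish (i). The implication reduced $\Rightarrow$ $M$-reduced is immediate: type-$(b)$ moves preserve both $l$ and the represented element while a type-$(a)$ move strictly drops the length, so any shortening by $M$-operations would produce an expression shorter than $l(w)$, a contradiction. For the converse I prove the contrapositive: if $w=s_1\cdots s_k$ is not reduced, choose the least $j$ with $s_1\cdots s_j$ not reduced, so $s_1\cdots s_{j-1}$ is reduced and right-multiplication by $s_j$ decreases its length. The right-handed Exchange Condition then yields an index $i$ with $s_1\cdots s_{j-1}s_j=s_1\cdots\widehat{s_i}\cdots s_{j-1}$; cancelling the common prefix gives two reduced expressions (both of length $j-i$, being factors of a reduced word) $u:=s_i\cdots s_{j-1}=s_{i+1}\cdots s_{j-1}s_j$. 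By (ii) these are linked by type-$(b)$ moves, so I may rewrite the factor $s_i\cdots s_{j-1}$ of the original word as $s_{i+1}\cdots s_{j-1}s_j$, which places its trailing $s_j$ immediately before the untouched letter $s_j$ at position $j$; deleting the resulting $s_js_j$ by a type-$(a)$ move shortens the word, so it is not $M$-reduced. I expect the main obstacle to be the two foundational inputs feeding this argument — the sign coherence $\Phi=\Phi^+\sqcup\Phi^-$ with its length formula, and the dihedral/parabolic analysis showing that two distinct left descents force $\mathrm{m}_{st}<\infty$ and the full alternating prefix — as everything else is bookkeeping once these are available.
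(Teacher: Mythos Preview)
Your proposal is correct and follows the standard Tits--Matsumoto argument; note however that the paper does not supply its own proof of this proposition but simply cites \cite[Theorem 3.4.2]{davis2012geometry}, so there is nothing to compare against. Your route via the Tits representation, the root-system derivation of the Exchange Condition, and the dihedral/parabolic sub-lemma for two common descents is precisely the approach taken in Davis's book (and in most standard references), and the bookkeeping in your reduction of the $M$-reduced $\Rightarrow$ reduced direction to statement (ii) is clean and correct.
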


Let us now recall the deletion property and the exchange property of Coxeter groups. In the following proposition, $\hat{s_i}$ means omitting $s_i$.

\begin{prop}(\cite[Section 3.2]{davis2012geometry})
Let $(W,S)$ be a Coxeter system and $w = s_1\ldots s_k\in W$ be a reduced word.
\begin{enumerate}
	    \item [(i)]	[Exchange Property] If $s\in S$ and $l(ws)< l(w)$, then $w = s_1\ldots \hat{s_i}\ldots s_ks$ for some $i\in \{1,\ldots,k\}$.
  \item [(ii)] [Deletion Property] If $l(w)<k$, then $w = s_1\ldots\hat{s_i}\ldots\hat{s_j}\ldots s_k$ for some $i\ne j$.
    \end{enumerate}
\end{prop}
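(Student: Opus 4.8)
The plan is to work entirely inside the Tits representation $\rho\colon W\to \mathrm{GL}(V)$ introduced above, where $V$ has basis $(e_s)_{s\in S}$ and $\rho(s)x = x - 2\langle e_s,x\rangle e_s$. Write $\Phi = \{\rho(w)e_s : w\in W,\ s\in S\}$ for the set of roots, and call a root \emph{positive} (resp. \emph{negative}) if, expressed in the basis $(e_s)$, all its coordinates are $\ge 0$ (resp. $\le 0$); denote these sets by $\Phi^+$ and $\Phi^-$. The two structural facts I would rely on are: (F1) every root lies in $\Phi^+\cup\Phi^-$, and each simple reflection $\rho(s)$ maps $e_s\mapsto -e_s$ while permuting $\Phi^+\setminus\{e_s\}$; and (F2) the sign--length dictionary $l(ws)<l(w)\iff \rho(w)e_s\in\Phi^-$. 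Granting these, the Exchange Property and the Deletion Property both fall out by short combinatorial arguments.

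For the Exchange Property, suppose $w=s_1\cdots s_k$ is reduced with $l(ws)<l(w)$, so by (F2) the root $\rho(w)e_s$ is negative. I would track the roots $\gamma_j := \rho(s_{j+1}\cdots s_k)e_s$ for $0\le j\le k$, which satisfy $\gamma_k=e_s\in\Phi^+$, $\gamma_0=\rho(w)e_s\in\Phi^-$, and $\gamma_{j-1}=\rho(s_j)\gamma_j$. Choosing $p$ maximal with $\gamma_{p-1}\in\Phi^-$ forces $\gamma_p\in\Phi^+$ together with a sign flip $\rho(s_p)\gamma_p=\gamma_{p-1}\in\Phi^-$; by the second half of (F1) the only positive root sent negative by $\rho(s_p)$ is $e_{s_p}$, hence $\gamma_p = \rho(s_{p+1}\cdots s_k)e_s = e_{s_p}$. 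Since each $\rho(w)$ preserves $\langle\cdot,\cdot\rangle$ and $\rho(s)$ is the reflection fixing $H_s$, conjugating $\rho(s)$ by $\rho(u)$, $u:=s_{p+1}\cdots s_k$, gives the reflection in $\rho(u)e_s=e_{s_p}$, namely $\rho(s_p)$; as $\rho$ is faithful this yields $u\,s\,u^{-1}=s_p$ in $W$. Rearranged, this reads $s_p s_{p+1}\cdots s_k = s_{p+1}\cdots s_k\, s$, and substituting into $w$ gives $w = (s_1\cdots \hat{s_p}\cdots s_k)\,s$, which is exactly the asserted identity.

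The Deletion Property I would then deduce from the Exchange Property. If $w=s_1\cdots s_k$ has $l(w)<k$, the word is not reduced, so there is a smallest index $j$ for which $s_1\cdots s_{j+1}$ is non-reduced while $w':=s_1\cdots s_j$ is reduced; thus $l(w's_{j+1})<l(w')$. Applying the Exchange Property to $w'$ and $s=s_{j+1}$ produces some $i\le j$ with $s_1\cdots s_j s_{j+1} = s_1\cdots\hat{s_i}\cdots s_j$. Plugging this back into $w=s_1\cdots s_j s_{j+1} s_{j+2}\cdots s_k$ cancels the letters in positions $i$ and $j+1$, giving $w=s_1\cdots\hat{s_i}\cdots\hat{s_{j+1}}\cdots s_k$ with $i\ne j+1$, as required.

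The genuine obstacle, and the only place requiring real work, is establishing (F1) and (F2): these are not independent but must be proved together by induction on $l(w)$, the delicate point being the one-sign property of roots (a strictly positive coordinate and a strictly negative coordinate cannot coexist in one root). The cleanest route is to show simultaneously that $N(w):=\{\alpha\in\Phi^+ : \rho(w)\alpha\in\Phi^-\}$ has cardinality $l(w)$ and that each reflection behaves as in (F1); the explicit formula for $\rho(s)$ shows that applying $\rho(s)$ alters only the $e_s$-coordinate, which drives the induction, while faithfulness of $\rho$ (already recorded above) is what lets equalities of roots be read back as equalities in $W$. Once this foundational step is in place, both properties follow immediately by the arguments above.
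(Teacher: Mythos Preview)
The paper does not supply its own proof of this proposition; it simply cites \cite[Section 3.2]{davis2012geometry} as a black box. So there is no ``paper's proof'' to compare against, and your task was really to reconstruct a standard argument.

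Your proposal is correct and follows the classical root-system route (as in Humphreys or Bj\"orner--Brenti): track the sign change along the root sequence $\gamma_j=\rho(s_{j+1}\cdots s_k)e_s$, use (F1) to pin down $\gamma_p=e_{s_p}$, read off the reflection identity $usu^{-1}=s_p$ via faithfulness, and then derive Deletion from Exchange by locating the first non-reduced prefix. The only caveat worth flagging is a potential circularity: in many treatments, faithfulness of the Tits representation is itself proved \emph{after} (or simultaneously with) the combinatorics you are invoking, so one must be careful that (F1), (F2), and injectivity of $\rho$ are established in a self-contained inductive package that does not already presuppose the Exchange Property. You acknowledge this in your final paragraph, and the sketch you give (proving $|N(w)|=l(w)$ together with (F1) by induction on length) is exactly the right way to close that loop. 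With that foundational step granted, both parts follow as you describe.

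One cosmetic point: the proposition as stated in the paper says ``let $w=s_1\ldots s_k$ be a reduced word'' and then in (ii) assumes $l(w)<k$, which is vacuous as written; you correctly interpreted (ii) for a not-necessarily-reduced expression, which is the intended statement.
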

It is easy to get the following facts by using the exchange property of Coxeter groups.
\begin{cor}\label{cor; length-starting} 
Let $(W,S)$ be a Coxeter system.
\begin{enumerate}
	    \item [(i)]	For $s\in S$ and $w\in W$, $l(sw)< l(w)$ (resp. $l(ws)< l(w)$) if and only if there is a reduced expression of $w$ starting with $s$ (resp. ending with $s$).
	
  \item [(ii)] For $s\in S$ and $w\in W$, $l(sw)> l(w)$ (resp. $l(ws)> l(w)$) if and only if there is no reduced expression of $w$ starting with $s$ (resp. ending with $s$).
    \end{enumerate}
\end{cor}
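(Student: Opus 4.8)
The plan is to deduce everything from the Exchange Property stated just above, together with the elementary fact that multiplication by a generator always changes the length by exactly one. Throughout I would use that $l(w^{-1}) = l(w)$, which holds because reversing a reduced expression $w = s_1 \cdots s_k$ produces the reduced expression $w^{-1} = s_k \cdots s_1$ of the same length.

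First I would record the parity observation that underlies the reduction of (ii) to (i). The assignment $s \mapsto -1$ extends to a group homomorphism $\varepsilon : W \to \{\pm 1\}$, since each braid relator $(st)^{\mathrm{m}_{st}}$ maps to $(-1)^{2\mathrm{m}_{st}} = 1$; as $\varepsilon(w) = (-1)^{l(w)}$ and $\varepsilon(ws) = -\varepsilon(w)$, the integers $l(ws)$ and $l(w)$ have opposite parity. Combined with the trivial bounds $l(w) - 1 \le l(ws) \le l(w) + 1$ (and likewise for $l(sw)$), this forces $l(ws) = l(w) \pm 1$. Hence for each fixed $s$ and $w$ exactly one of $l(ws) < l(w)$ and $l(ws) > l(w)$ holds, and the same dichotomy trivially holds for the existence versus non-existence of a reduced expression of $w$ ending in $s$. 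Consequently part (ii) is the logical negation of part (i) on both sides, and it suffices to prove (i).

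For (i), I would first treat reduced expressions ending in $s$. The direction $(\Leftarrow)$ is immediate: if $w = s_1 \cdots s_{k-1} s$ is reduced with $k = l(w)$, then $ws = s_1 \cdots s_{k-1}$, so $l(ws) \le k - 1 < l(w)$. For $(\Rightarrow)$, suppose $l(ws) < l(w)$ and fix a reduced expression $w = s_1 \cdots s_k$ with $k = l(w)$. The Exchange Property yields $w = s_1 \cdots \hat{s_i} \cdots s_k s$ for some $i$. This expression uses $k$ letters and represents $w$, whose length is $k$; therefore it is a \emph{reduced} expression of $w$, and it ends in $s$, as required.

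Finally I would obtain the ``starting with $s$'' statement by passing to inverses. Since $s^{-1} = s$, we have $l(sw) < l(w)$ if and only if $l(w^{-1} s) = l((sw)^{-1}) < l(w^{-1})$, which by the case already proved is equivalent to $w^{-1}$ admitting a reduced expression $w^{-1} = t_1 \cdots t_{m-1} s$ ending in $s$; reversing this gives the reduced expression $w = s\, t_{m-1} \cdots t_1$ of $w$ starting in $s$, and the correspondence is reversible. I do not anticipate a genuine obstacle here: the only point requiring care is verifying that the word produced by the Exchange Property is actually reduced, which is settled purely by counting its letters against $l(w)$.
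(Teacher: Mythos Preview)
Your argument is correct and follows exactly the route the paper indicates: the corollary is stated immediately after the Exchange Property with the remark that it is ``easy to get'' from that property, and no further proof is given. Your write-up supplies precisely the missing details---the parity argument ruling out $l(ws)=l(w)$, the direct application of the Exchange Property to produce a reduced expression ending in $s$, and the passage to inverses for the ``starting with $s$'' case---and there are no gaps.
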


	Coxeter complexes and  Davis complexes are two standard geometric realizations of Coxeter groups, and Coxeter groups also admit interesting actions on them. In the following, we will give a self-contained introduction to these two types of complexes and we refer readers to \cite{davis2012geometry,ronan2009lectures,thomas2018geometric} for more information about them.

\begin{defn}(\cite[Definition 4.1]{thomas2018geometric})\label{Def: Abstract Simplicial Complex}
Let $V$ be a set.  A \textit{simplicial complex} 
is a collection $\mathcal{K}$ of finite subsets (including the empty set) of $V$ (called the vertex set) such that 
\begin{enumerate}
    \item[(i)] every singleton set $\{v\}$ is in $\mathcal{K}$;
    \item[(ii)] for every $K\in \mathcal{K}$, all subsets of $K$ are in $\mathcal{K}$. 
\end{enumerate}
\end{defn}
  
Elements in $\mathcal{K}$ are called \textit{simplices} and every subset of a simplex $K$ is said to be a \textit{face} of $K$. 
The \textit{dimension} of a simplex $K$ is $|K| - 1$. A $k$-simplex is a simplex of dimension $k$. 
The dimension of $\mathcal{K}$ is the maximal dimension among its simplices. If a subset of $\mathcal{K}$ is also a simplicial complex, it is called a \textit{subcomplex} of $\mathcal{K}$. Note that every $k$-simplex $K$ can also be viewed as a simplicial complex of dimension $k$ by identifying $K$ with the collection of all the subsets of $K$.

In this paper, we will only consider finite dimensional simplicial complexes. 
The simplicial complex $\mathcal{K}$ in Definition \ref{Def: Abstract Simplicial Complex} admits a \textit{geometric realization}. For any $k$-simplex in a simplicial complex, its geometric realization $|K|$ is the convex hull of the canonical basis in $\mathbb{R}^{k+1}$.
Then the geometric realization of $\mathcal{K}$ denoted by $|\mathcal{K}|$ will be 
$$
|\mathcal{K}| = \bigcup_{K\in \mathcal{K}} |K|\subseteq \mathbb{R}^{N},
$$
where $N$ is the cardinality of the vertex set of $\mathcal{K}$. We equipped $|\mathcal{K}|$ with the subspace topology of $\mathbb{R}^{N}$.  
In the rest of the paper, we identify a simplicial complex $\mathcal{K}$ with its geometric realization.

\begin{rem}
Note that any partial ordered set (poset) $P$ 
gives rise to a simplicial complex $\mathcal{K}(P)$ which we call the \textit{ geometric realization of $P$}. The elements of $P$ are vertices of $\mathcal{K}(P)$ and the simplices of $\mathcal{K}(P)$ are the finite totally ordered subsets of $P$. 
 \end{rem}


To properly define Coxeter complexes and Davis complexes, we will first introduce a general construction which will include these two types of complexes as special cases.

\begin{defn}
Let $(W, S)$ be a Coxeter system and $T\subsetneq S$. We call the subgroup $W_T$ generated by $T$ a \textit{parabolic subgroup}. When $T$ is a singleton set, say $\{s\}$, we simplify $W_{\{s\}}=\{e, s\}$ as $W_s$.    
\end{defn}

\begin{defn}[The Basic Construction] (\cite[Definitions 4.2, 4.5]{thomas2018geometric})\label{def; Basic-constraction}
Let $(W, S)$ be a Coxeter system and $X$ be a simplicial complex with a collection of nonempty closed subcomplexes $X_s$ $(s\in S)$.
\begin{enumerate}
    \item [(i)] The space $X$ is called a \textit{mirror space over $S$} and $X_s$ ($s\in S$) are called \textit{mirrors}.

    \item [(ii)] For every simplex $x\in X$, let $S(x) := \{s\in S: x\in X_s\}$. Define the \textit{basic construction} to be the following quotient space:
$$\mathcal{U}(W,X) := (W\times X)/\sim,$$
where $(w,x)\sim (w',y)$ if and only if $x= y$ and $w^{-1}w'$ is in $W_{S(x)}$. Equipped with quotient topology, $\mathcal{U}(W,X)$ is a connected topological space.  
\end{enumerate}
The basic construction $\mathcal{U}(W,X)$ obtained above forms a simplicial complex. 
The simplices in $\mathcal{U}(W,X)$ are denoted by $[w,x]$.
\end{defn}
The basic construction $\mathcal{U}(W,X)$ obtained above
can be viewed as a geometric realization for $W$ by gluing $|W|$-many copies of $X$ along mirrors. 

\begin{defn}
Let $\mathcal{U}(W,X)$ be a basic construction.
\begin{enumerate}  
\item [(i)] The map $i: X
\rightarrow \mathcal{U}(W,X)$
defined by $x\mapsto [1, x]$ is an embedding. We identify $X$ with its image under $i$ and call it the \textit{fundamental chamber}.
The image of $\{w\}\times X$  for any $w\in W$ in $\mathcal{U}(W,X)$ is denoted by $wX$ which is called a \textit{chamber}.

\item [(ii)]  Two chambers $w_1X$ and $w_2X$ are said to be \textit{$s$-adjacent} if $w_1^{-1}w_2 = s\in S$, where $w_1, w_2\in W$. Two chambers are called \textit{adjacent} if they are $s$-adjacent for a generator $s$.

\item [(iii)] Define the $W$-action on $\mathcal{U}(W,X)$ via $W\times \mathcal{U}(W,X)\to \mathcal{U}(W,X)$ by $(g, [w,x]) \mapsto [gw, x]$. In particular, the $W$-action maps chambers to chambers, i.e. $(g, wX)\mapsto gwX$.
\end{enumerate}
\end{defn}
 Now we are ready to give the definitions of Coxeter complexes and Davis complexes. 


\begin{defn}(Coxeter Complexes) (\cite[Example 4.7, Theorem 5.23]{thomas2018geometric})
Let $(W,S)$ be a Coxeter system. Let $K$ be a $(|S|-1)$-simplex with mirrors $K_s$ ($s\in S$) being the co-dimension 1 faces labeled by $s$. The resulting simplicial complex $\mathcal{U}(W,K)$ is called the \textit{Coxeter complex} $\Sigma(W,S)$. An equivalent definition of Coxeter complex is to define it as the geometric realization of the poset $\{wW_T: w\in W,\; T\subsetneq S\}$ ordered by inclusion. 
\end{defn}


\begin{defn}(Davis Complexes) (\cite[Definitions 5.6, 5.9, Corollary 5.21, Theorem 5.22]{thomas2018geometric})\label{defn; Davis complex}
Let $(W, S)$ be a Coxeter system. Consider the poset $P=\{T \subseteq S: W_T \text{ is finite}\}$  and let $K=\mathcal{K}(P)$ (the geometric realization of $P$). The mirror $K_s$ ($s\in S$) is the co-dimension 1 subcomplex where every vertex contains $s$. 
The resulting simplicial complex $\mathcal{U}(W,K)$ is called the \textit{Davis complex} $\Xi(W,S)$. An equivalent definition of Davis complex is to define it as the geometric realization of the poset $\{wW_T: w\in W,\; T\subseteq S, W_T \text{ is finite} \}$ ordered by inclusion. 
\end{defn}



\begin{rem} \label{rem;locally finite prop}
Note that the Coxeter complex of $W$ can be embedded in the real vector space where the Tits representation of $W$ acts. 
 Davis complexes are locally finite subcomplexes  of Coxeter complexes, since for any vertex $wW_T$ in the Davis complex, there are $|W_T|<\infty$ many chambers in the Davis complex containing it.
	Moreover, when Coxeter groups are irreducible and affine, for any $T\subsetneq S$, $W_T$ is finite. Therefore, in this case their Davis complexes are the same as Coxeter complexes.
\end{rem}

On $\Sigma(W,S)$,  there is a distance function between chambers. For any $w_1, w_2\in W$, the distance between the chambers $w_1K$ and $w_2K$ is defined by
$$d(w_1K, w_2K):= l(w_1^{-1}w_2). $$
Note that $W$ acts freely and transitively on chambers in $\Sigma(W,S)$ by the $W$-action.
Moreover, the action also preserves the above distance function, that is, $d(gw_1K, gw_2K) = d(w_1K, w_2K)$ for any $g\in W$.
To check $d$ is a well-defined distance function, we only need to check the triangle inequality since the other two conditions are obvious. This follows from the fact of the length function: $l(xy)\le l(x) +l(y)$ for any $x,y\in W$. One can immediately get 
$$d(w_1K, w_2K) = d(w^{-1}w_1K, w^{-1}w_2K)\le d(w_1K, wK)+ d(wK, w_2K),$$ 
for any $w_1, w_2$ and $w$ in $W$.

The Coxeter complex $\Sigma(W,S)$ and Davis complex $\Xi(W,S)$  possess  rich geometric structures, such as walls and roots, which will be essential for the  construction of  Hilbert transforms on Coxeter groups in the next section.  


\begin{defn}\label{def; roots-in-Cox-complexes}(\cite[Section 2.2]{ronan2009lectures})
Let $(W,S)$ be a Coxeter system. 
\begin{enumerate}
    \item[(i)]  For any $s\in S$ and $w\in W$, the conjugate $wsw^{-1}$ is called a \textit{reflection} in $W$.
Define $\Gamma := \{wsw^{-1}: w\in W, s\in S\}$ to be the set of all reflections in $W$. 
 \item[(ii)] In $\Sigma(W,S)$, the set of all simplices fixed by the action of $r\in \Gamma$, denoted by $H_r$, is called a \textit{wall} labeled by $r$.
  \item[(iii)] For each wall $H_{r}$, it partitions the chambers of $\Sigma(W,S)$ into two roots (half-spaces) which are interchanged by $r$.
  We define the \textit{positive root $H_{r}^+$} to be the half which contains the fundamental chamber and the \textit{negative root $H_{r}^-$} to be the other half. So we have $\Sigma(W,S)=H_{r}^+ \cup H_{r}^-$ where $H_{r}^+\cap H_{r}^- = H_r$.  
\item[(iv)]Denote the set of all roots in $\Sigma(W,S)$ by $\mathcal{H}$ and the set of all walls by $\partial\mathcal{H}$.
\end{enumerate}
\end{defn}

\begin{rem}\label{rem; unique-wall}
  Note that there is a unique wall separating any two adjacent chambers. Indeed, for any $w\in W$ and $s\in S$, it is obvious that the reflection $wsw^{-1}$ is the unique reflection interchanging $wK$ and $wsK$. By \cite[Proposition 2.6(iii)]{ronan2009lectures}, there is a one-to-one correspondence between $\Gamma$ and $\partial\mathcal{H}$ with $r\in \Gamma$ corresponding to the wall labeled by $r$, so the unique wall separating $wK$ and $wsK$ is $H_{wsw^{-1}}$. Moreover, the intersection of $wK$ and $wsK$ in $\Sigma(W,S)$ is a copy of mirror $K_s$ in these two chambers. The only reflection which fixes this intersection is $wsw^{-1}$ so the intersection lies in a unique wall which is $H_{wsw^{-1}}$.
\end{rem}

\begin{rem}\label{rem: expression of half spaces}
By \cite[Proposition 2.6(ii)]{ronan2009lectures}, we know that
any root $\alpha\in \mathcal{H}$ in  $\Sigma(W,S)$
can be characterized by any two adjacent chambers $x, y$ in $\Sigma(W,S)$ such that $x\in \alpha$ and $y\notin \alpha$, and $\alpha=\{z: d(z,x)<d(z,y)\}$. Let $r\in \Gamma$ and $K$ be the fundamental chamber in $\Sigma(W,S)$. Then there exists $w\in W, s\in S$ such that $r=wsw^{-1}$.
    When $l(w)< l(ws)$, 
$$H_{wsw^{-1}}^{+} = H(wK,wsK) = \{gK\in\Sigma(W,S): d(gK, wK)< d(gK, wsK)\};$$
when $l(w)> l(ws)$,
$$H_{wsw^{-1}}^{+} = H(wsK, wK) = \{gK\in\Sigma(W,S): d(gK, wK)> d(gK, wsK)\}.$$
In both cases,  the negative root $H_{wsw^{-1}}^{-}$ has the following expression:
$$H_{wsw^{-1}}^{-} = \{gK\in\Sigma(W,S): gK \notin H_{wsw^{-1}}^{+} \}.$$
The $w$ and $ws$ which appear in the above expressions can be replaced by any $w'\in W$ and $w't$ with $t\in S$ such that $wsw^{-1} = w'tw'^{-1}$.
\end{rem}

Note that for any $s\in S$, by Definition \ref{def; roots-in-Cox-complexes}, $K\in H_s^+$ and $s$ interchanges $H_s^+$ and $H_s^-$, so we have $sK\in H_s^-$. It then follows directly from  the above remark that we have the following useful characterization of the positions of chambers in terms of the word length.

\begin{prop}\label{prop; charac-root with length}
		Let $s\in S$ and $g\in W$.
\begin{enumerate}
	    \item [(i)]		$gK\in H_s^+$ (i.e. $d(gK, K) < d(gK, sK)$) if and only if $l(g)< l(sg)$.
		
  \item [(ii)]	$gK\in H_s^-$ (i.e. $d(gK, K) > d(gK, sK)$) if and only if $l(g)> l(sg)$.
  \end{enumerate}
\end{prop}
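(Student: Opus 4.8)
The plan is to translate both equivalences into a direct computation of chamber distances via the word length, and then to read off the geometric content from Remark \ref{rem: expression of half spaces} and the note immediately preceding the statement.

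\textbf{Step 1 (distances as lengths).} Using the definition $d(w_1K, w_2K) = l(w_1^{-1}w_2)$, together with the standard facts that $l(w) = l(w^{-1})$ for all $w \in W$ and $s = s^{-1}$, I would compute $d(gK, K) = l(g^{-1}) = l(g)$ and $d(gK, sK) = l(g^{-1}s) = l\big((g^{-1}s)^{-1}\big) = l(sg)$. Hence the distance inequality $d(gK,K) < d(gK,sK)$ is literally the length inequality $l(g) < l(sg)$, and the reverse inequality $d(gK,K) > d(gK,sK)$ is literally $l(g) > l(sg)$.

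\textbf{Step 2 (identifying the roots).} The reflection attached to the pair of adjacent chambers $K$ and $sK$ is $s = e\cdot s\cdot e^{-1}$, with $l(e)=0<1=l(s)$. Applying Remark \ref{rem: expression of half spaces} with $w = e$ (so that $ws = s$), or equivalently using the note just before the statement ($K \in H_s^+$ and $sK \in H_s^-$) together with the characterization of a root by any two adjacent chambers, gives $H_s^+ = \{gK : d(gK,K) < d(gK,sK)\}$; applying the same characterization to the root $H_s^-$ (with $sK \in H_s^-$, $K \notin H_s^-$) gives $H_s^- = \{gK : d(gK,sK) < d(gK,K)\}$. Combining with Step 1 yields (i) and (ii). If one wishes, Corollary \ref{cor; length-starting} then rephrases these conditions in terms of whether $g$ has a reduced expression starting with $s$.

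\textbf{The only point requiring care} is that, for chambers, the two cases in (i) and (ii) are genuinely exhaustive, i.e. no chamber $gK$ lies on the wall $H_s$, equivalently $l(g) \ne l(sg)$. This is already contained in Remark \ref{rem: expression of half spaces}, where $H_s^+$ and $H_s^-$ partition the chambers of $\Sigma(W,S)$ with intersection the wall $H_s$, which contains no chamber; at the combinatorial level it also reflects the fact that multiplication by a generator changes $l(\cdot)$ by exactly $\pm 1$, since the parity of the length is the sign character $W \to \{\pm 1\}$ sending each $s \in S$ to $-1$. Beyond recording this, the argument is routine, so I do not anticipate a genuine obstacle.
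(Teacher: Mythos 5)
Your proposal is correct and follows essentially the same route as the paper, which derives the proposition "directly from" Remark \ref{rem: expression of half spaces} together with the observations $K\in H_s^+$ and $sK\in H_s^-$; you have simply spelled out the distance-to-length translation $d(gK,K)=l(g)$, $d(gK,sK)=l(sg)$ and the specialization of the remark to $w=e$. Your added remark on exhaustiveness (no chamber lies on the wall, since $l(sg)=l(g)\pm 1$ by the sign character) is a point the paper leaves implicit, and it is correct.
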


It is well-known that the action of $W$ on $\Sigma(W,S)$ preserves the geometric structures mentioned above, i.e. it maps walls to walls and roots to roots. For the sake of completeness, we include a short proof of the following result. 

\begin{prop} \label{Prop: actions on half spaces}
Let $(W,S)$ be a Coxeter system, $w\in W$ and $s\in S$. Denote the stabilizer of the wall ${ H_s^+}$ by ${\rm Stab}_{ H_s^+}$, i.e. ${\rm Stab}_{ H_s^+}= \{g\in W: g\cdot H_s^+ = H_s^+\}$. The following results hold.
\begin{enumerate}
    \item [(i)] 
$w\cdot H_s = H_{wsw^{-1}}$.

\item [(ii)] 
  One has $w\cdot H_s^{+}=H(wK,wsK)$ and $w\cdot H_s^{-}= H(wsK,wK)$. In other words, 
$$w\cdot H_s^{+} = \left\{\begin{array}{l}
			H_{wsw^{-1}}^{+}, \quad \text{if } \; l(w)< l(ws)\\
			H_{wsw^{-1}}^{-}, \quad \text{if } \; l(w)> l(ws). 
		\end{array} \right.$$

\item [(iii)] Set $T_s = \{t\in S: \mathrm{m}_{st} = 2\}$. $W_{T_s}\subseteq {\rm Stab}_{ H_s^+}$ and $w\cdot H_s^{+} = H_s^{-}$ if $sw \;\text{or}\;ws\in {\rm Stab}_{ H_s^+}$.


\end{enumerate}
\end{prop}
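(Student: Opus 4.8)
The plan is to deduce all three parts directly from the definitions of walls and roots (Definition~\ref{def; roots-in-Cox-complexes}), together with the fact that $W$ acts on $\Sigma(W,S)$ by simplicial automorphisms preserving the chamber distance $d$. For part~(i) I would argue from the definition of a wall as a fixed-simplex set: for a simplex $x\in\Sigma(W,S)$ one has $x\in w\cdot H_s \iff w^{-1}x\in H_s \iff s(w^{-1}x)=w^{-1}x \iff (wsw^{-1})x=x \iff x\in H_{wsw^{-1}}$, hence $w\cdot H_s = H_{wsw^{-1}}$.

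For part~(ii), recall that $K\in H_s^+$ while $s$ interchanges $H_s^+$ and $H_s^-$ (Definition~\ref{def; roots-in-Cox-complexes}(iii)), so $sK\in H_s^-$; thus $H_s^+,H_s^-$ are the two roots determined by the adjacent chambers $K,sK$, and the adjacent-chamber description of roots in Remark~\ref{rem: expression of half spaces} gives $H_s^+=\{z:d(z,K)<d(z,sK)\}$. Since the $W$-action preserves $d$, applying $w$ yields $w\cdot H_s^+=\{z:d(z,wK)<d(z,wsK)\}=H(wK,wsK)$ and likewise $w\cdot H_s^-=H(wsK,wK)$; by part~(i) and Remark~\ref{rem; unique-wall} the wall separating $wK$ and $wsK$ is $w\cdot H_s=H_{wsw^{-1}}$. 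Finally I would match this against the $+/-$ convention using Remark~\ref{rem: expression of half spaces}: if $l(w)<l(ws)$ that remark says $H_{wsw^{-1}}^+=H(wK,wsK)$, so $w\cdot H_s^+=H_{wsw^{-1}}^+$; if $l(w)>l(ws)$ it says $H_{wsw^{-1}}^+=H(wsK,wK)$, the complementary root, so $w\cdot H_s^+=H(wK,wsK)=H_{wsw^{-1}}^-$.

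For part~(iii), since ${\rm Stab}_{H_s^+}$ is a subgroup it suffices to show each generator $t$ with $\mathrm{m}_{st}=2$ lies in it. For such $t$ we have $st=ts$, hence $tst^{-1}=tst=s$, and the length-two word $ts$ is reduced (distinct generators, so $M$-reduced by Proposition~\ref{them; M-operation}), giving $l(t)=1<2=l(ts)$; part~(ii) with $w=t$ then yields $t\cdot H_s^+=H_{tst^{-1}}^+=H_s^+$. (Alternatively, $\langle e_s,e_t\rangle=-\cos(\pi/2)=0$ forces $\rho(t)e_s=e_s$, so $t$ fixes $e_s$ and preserves the side $\{x:\langle e_s,x\rangle\ge 0\}$ of $H_s$ in the Tits cone.) For the last assertion, any $g\in{\rm Stab}_{H_s^+}$, being a homeomorphism fixing $H_s^+$, also fixes its frontier $H_s$ and the closure $H_s^-$ of its complement, so ${\rm Stab}_{H_s^+}={\rm Stab}_{H_s^-}$. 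Then, using $s\cdot H_s^+=H_s^-$: if $ws\in{\rm Stab}_{H_s^+}$, then $w\cdot H_s^+=(ws)\cdot(s\cdot H_s^+)=(ws)\cdot H_s^-=H_s^-$; if $sw\in{\rm Stab}_{H_s^+}$, then $w\cdot H_s^+=s\cdot\bigl((sw)\cdot H_s^+\bigr)=s\cdot H_s^+=H_s^-$.

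The only point needing genuine care is the end of part~(ii): one must reconcile the root that $w$ produces — the one on the $wK$-side of the wall — with the $+/-$ labelling, which is anchored at the fundamental chamber $K$ rather than at $wK$. This mismatch is exactly what produces the $l(w)$ versus $l(ws)$ dichotomy, and it is resolved by invoking Remark~\ref{rem: expression of half spaces}. Parts~(i) and~(iii), and the remainder of~(ii), are routine manipulations with the definitions.
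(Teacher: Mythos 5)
Your proof is correct and follows essentially the same route as the paper: parts (ii) and (iii) are exactly the paper's (terse) argument, and your filled-in details — the $l(w)$ versus $l(ws)$ dichotomy via Remark~\ref{rem: expression of half spaces}, the reduction of $W_{T_s}\subseteq{\rm Stab}_{H_s^+}$ to generators, and the factorizations $w=(ws)s$ and $w=s(sw)$ — are all sound. The only divergence is in (i), where you compute the fixed-simplex set of $wsw^{-1}$ directly from the definition of a wall, whereas the paper observes that both $w\cdot H_s$ and $H_{wsw^{-1}}$ separate $wK$ from $wsK$ and invokes the uniqueness of the separating wall (Remark~\ref{rem; unique-wall}); both are equally valid one-line arguments.
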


\begin{proof}
    (i) Note that $w\cdot H_s$ and $H_{wsw^{-1}}$ are walls which separate $wK$ and $wsK$. 
    Thus by Remark \ref{rem; unique-wall}, we have $w\cdot H_s = H_{wsw^{-1}}$.

    (ii) 
    The results in (ii) follow easily from Remark \ref{rem: expression of half spaces} and the fact that the action preserves the distance $d$.

    (iii) Note that $s\notin T_s$. It follows directly from part (ii) that $W_{T_s}\subseteq {\rm Stab}_{ H_s^+}$. On the other hand, 
    note that for any $t\in S$, $t\cdot H_s^+ = H_s^-$ if and only if $t = s$. Therefore, we get the conclusion directly.

\end{proof}

	
	

\section{Hilbert transforms on Coxeter groups}\label{section: Cotlar}

Based on the background introduced in the previous section, we will now give the proper definition of the multiplier on Coxeter groups. Let $W$ be a Coxeter group and $\Sigma(W,S)$ be the Coxeter complex of $W$. Denote 
 the wall in $\Sigma(W,S)$ associated to the generator $s$ by $H_s$ and  the corresponding positive root by $H_s^+$ and negative root by $H_s^-$.
We define
\begin{equation}\label{eq: def of symbol on W}
\tag{MW}
		{m}_W^s(g) := \left\{\begin{array}{l}
			1, \quad \text{if } \; gK\in H_s^{+}\\
			-1, \quad \text{if } \; gK\in H_s^{-}.
		\end{array} \right.
\end{equation}
The above multiplier will give rise to a candidate for Hilbert transforms on $W$, i.e. the Fourier multiplier $T_{{m}_W^s}$. 
To check if it is  bounded on $L_p(\mathcal{L}W)$, the main tool we will use is the Cotlar identity. If ${m}^s_W$ defined above satisfies \eqref{eq: Cotlar} relative to a certain proper subgroup, applying  \ref{prop; Lp-bounded-introduction} the boundedness then follows. However, it turns out that this is not always the case unless we put some extra conditions on the roots in $\Sigma(W,S)$. But  Hilbert transforms can still be defined on general Coxeter groups even though they fail to satisfy the Cotlar identity; see the last subsection of this section for details.
\subsection{The nested condition}

Now we will discuss the nested condition for roots in $\Sigma(W,S)$ which is the key property that we will require for the Coxeter group $W$ to make the Cotlar identity true. Then \ref{theoremA} follows. 
 Consider the roots associated to the wall $H_s$, and let $g\in W$ act on them. We will then obtain two new roots associated to the wall $g\cdot H_s=H_{gsg^{-1}}$. After examining the multiplier defined in \eqref{eq: def of symbol on W}, we realize it satisfies \eqref{eq: Cotlar} relative to a certain proper subgroup if roughly speaking one of the new roots obtained by the action of any element is a subset of an original root.

\begin{defn}\label{Defn;Nested-condition} (Nested Condition)
    Let $(W,S)$ be a Coxeter system, $L$ be a subset of $W$ and $s\in S$. We say that $L$ satisfies the
    \textit{nested condition relative to $s$}, if for any $g\in L$, one of the following cases holds: 
    \begin{equation*}\label{nested-condition}
        \tag{NestC}
        g\cdot H_s^+=H_s^+,\;  g\cdot H_s^+= H_s^-,\; g\cdot H_s^+\subsetneq H_s^+,\; g\cdot H_s^+\subsetneq H_s^-,\; g\cdot H_s^-\subsetneq H_s^+,\; \text{or}\; g\cdot H_s^-\subsetneq H_s^-.
    \end{equation*}
\end{defn}

The six cases in \eqref{nested-condition} can be subdivided into two classes, depending on whether $gK$ is in the positive root $H_s^{+}$ or the negative root $H_s^{-}$. We show in the following that one class in \eqref{nested-condition} holds if and only if $gK$ is in a root associated with $H_s$.

\begin{prop}\label{prop;nested-condition}
     Let $(W,S)$ be a Coxeter system, $g\in W$ and $s\in S$. Assume that $W$ satisfies the nested condition relative to $s$. 

   \begin{enumerate}
	    \item [(i)] If $gK\in H_s^+$, then one of the following cases holds:
	$$g\cdot H_s^{+} = H_s^{+},\quad g\cdot H_s^{+} \subsetneq H_s^{+},\quad\text{or}\quad g\cdot H_s^{-} \subsetneq H_s^{+}.$$	
	 \item [(ii)] If $gK\in H_s^-$, then one of the following cases holds:
	$$g\cdot H_s^{+} = H_s^{-},\quad g\cdot H_s^{+} \subsetneq H_s^{-},\quad\text{or}\quad g\cdot H_s^{-} \subsetneq H_s^{-}.$$
    \end{enumerate}
\end{prop}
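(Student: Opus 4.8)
The plan is to read the conclusion off the nested condition once one knows where the two chambers $gK$ and $gsK$ sit relative to the wall $H_s$. I would first record two elementary facts. Since $K\in H_s^+$ and $sK\in H_s^-$ (by definition of the positive and negative roots, as already noted in the excerpt), applying $g$ shows that for \emph{every} $g\in W$ one has $gK=g\cdot K\in g\cdot H_s^+$ while $gsK=g\cdot(sK)\in g\cdot H_s^-$. Secondly — and this is the crux — I claim that whenever $gsg^{-1}\neq s$ the chambers $gK$ and $gsK$ lie on the same side of $H_s$. Indeed $gK$ and $gsK$ are $s$-adjacent, and by Remark~\ref{rem; unique-wall} the unique wall separating them is $H_{gsg^{-1}}$, since $gsg^{-1}$ is the reflection interchanging them; as $gsg^{-1}\neq s$, the bijection between $\Gamma$ and $\partial\mathcal{H}$ forces $H_{gsg^{-1}}\neq H_s$, so $H_s$ is not the separating wall, and since a chamber always lies in exactly one of $H_s^+,H_s^-$, the two chambers lie in the same one.

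Granting these, part (i) follows by eliminating cases. Suppose $gK\in H_s^+$ and apply the nested condition to $g$, so one of the six alternatives in \eqref{nested-condition} holds. If $g\cdot H_s^+=H_s^-$ or $g\cdot H_s^+\subsetneq H_s^-$, then $gK\in g\cdot H_s^+\subseteq H_s^-$; but a chamber, being top-dimensional, is not contained in the wall $H_s=H_s^+\cap H_s^-$, so this would put $gK$ on the negative side, contradicting $gK\in H_s^+$. If $g\cdot H_s^-\subsetneq H_s^-$, then the inclusion being proper rules out $gsg^{-1}=s$ (that identity would make $g$ preserve $H_s$, hence force $g\cdot H_s^-\in\{H_s^+,H_s^-\}$ by Proposition~\ref{Prop: actions on half spaces}), so $gsg^{-1}\neq s$; by the claim $gK$ and $gsK$ lie on the same side of $H_s$, and since $gsK\in g\cdot H_s^-\subseteq H_s^-$ this gives $gK\in H_s^-$, again a contradiction. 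The only surviving alternatives are $g\cdot H_s^+=H_s^+$, $g\cdot H_s^+\subsetneq H_s^+$ and $g\cdot H_s^-\subsetneq H_s^+$, which is exactly (i). Part (ii) is the mirror image: assuming $gK\in H_s^-$, the alternatives $g\cdot H_s^+=H_s^+$ and $g\cdot H_s^+\subsetneq H_s^+$ are excluded because they would put $gK$ in $H_s^+$, and $g\cdot H_s^-\subsetneq H_s^+$ is excluded because it forces $gsK\in H_s^+$ and hence, by the same-side claim, $gK\in H_s^+$; the survivors $g\cdot H_s^+=H_s^-$, $g\cdot H_s^+\subsetneq H_s^-$ and $g\cdot H_s^-\subsetneq H_s^-$ are exactly (ii).

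The only genuinely non-formal ingredient is the same-side claim, and even that is short; everything else is bookkeeping over the six cases. The one point needing a little care is the interplay between a root viewed as a subcomplex of $\Sigma(W,S)$ and viewed as a set of chambers: I would note explicitly that a root is a union of closed chambers, so an inclusion of roots is the same as an inclusion of the corresponding chamber sets, and that a chamber, being top-dimensional, is not contained in the wall $H_s$ and hence lies in exactly one of $H_s^+$ and $H_s^-$ in the sense used by Proposition~\ref{prop; charac-root with length}. With that understood, Remark~\ref{rem; unique-wall} and Proposition~\ref{Prop: actions on half spaces} supply everything needed, and no hypothesis beyond the nested condition enters.
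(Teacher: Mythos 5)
Your argument is correct and is essentially the paper's own proof: both eliminate the impossible alternatives of the nested condition using the facts that $gK\in g\cdot H_s^+$, $gsK\in g\cdot H_s^-$, that a chamber lies on exactly one side of $H_s$, and that the unique wall separating the adjacent chambers $gK$ and $gsK$ is $H_{gsg^{-1}}$ (Remark \ref{rem; unique-wall}). Your ``same-side claim'' is just the contrapositive packaging of the paper's step deducing $g\cdot H_s=H_s$ from both walls separating $gK$ and $gsK$, so the two proofs coincide in substance.
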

\begin{proof}
    (i) 
    Since $gK\in H_s^+$, it follows from \eqref{nested-condition} that one of the following cases holds:
$$g\cdot H_s^+ = H_s^+,\quad g\cdot H_s^+\subsetneq H_s^+,\quad g\cdot H_s^+\subsetneq H_s^-,\quad g\cdot H_s^-\subsetneq H_s^+,\quad \text{or}\quad g\cdot H_s^-\subsetneq H_s^-.$$ 
Now we claim that it is impossible to get one of the following cases:
$$g\cdot H_s^+ \subsetneq H_s^-\quad \text{or}\quad g\cdot H_s^-\subsetneq H_s^-.$$
If $g\cdot H_s^+ \subsetneq H_s^-$, then $g\cdot H_s^+\cap H_s^+ \subsetneq H_s^-\cap H_s^+ = H_s$, which means $g\cdot H_s^+\cap H_s^+$ contains no chamber. However, this contradicts the fact that $gK\in g\cdot H_s^+\cap H_s^+$. 
If $g\cdot H_s^-\subsetneq H_s^-$, 
then $gsK\in  g\cdot H_s^-= H_s^-\cap g\cdot H_s^-$ and $gK\in H_s^+\cap g\cdot H_s^+$. 
Since $gsK$ and $gK$ are adjacent and both walls $g\cdot H_s$ and $H_s$ separate $gK$ and $gsK$, by 
Remark \ref{rem; unique-wall}, we have $g\cdot H_s = H_s$. This contradicts the hypothesis that $g\cdot H_s^-\subsetneq H_s^-$. Thus the case $g\cdot H_s^-\subsetneq H_s^-$ will not happen if $gK\in H_s^+$.

(ii) Since $gK\in H_s^-$, it follows from \eqref{nested-condition} that one of the following cases holds:
$$g\cdot H_s^+ = H_s^-,\quad g\cdot H_s^+\subsetneq H_s^+,\quad g\cdot H_s^+\subsetneq H_s^-,\quad g\cdot H_s^-\subsetneq H_s^+,\quad \text{or}\quad g\cdot H_s^-\subsetneq H_s^-.$$ 
By a similar argument as in (i), we can show 
the following cases can not happen
$$g\cdot H_s^+ \subsetneq H_s^+\quad \text{or}\quad g\cdot H_s^-\subsetneq H_s^+.$$

\end{proof}

\begin{rem}\label{rem: Coxeter to Davis}
Note that all the definitions and results prior to this point in this section can be adapted to Davis complexes.
\end{rem}


   Now we are ready to prove Theorem A.

\begin{proof}[Proof of \ref{theoremA}]To simplify the notation, we will use $m$ to denote $m_W^s$ in this proof. 
(i) It is enough to consider the case when $m(g)\ne m(h)$. Suppose first that $m(g) = 1$ and $m(h) = -1$, that is $gK\in H_s^+$ and $hK\in H_s^-$.
Since $g\notin {\rm Stab}_{ H_s^+}$ and $gK\in H_s^+$, it follows from Proposition \ref{prop;nested-condition} (i) that either $g\cdot H_s^{+} \subsetneq  H_s^{+}$ or $g\cdot H_s^{-} \subsetneq  H_s^{+}$.
In the meantime, for $hK\in H_s^-$, it follows from Proposition \ref{prop;nested-condition} (ii) that one of the following cases holds:
\begin{equation}\label{equation; geometrical proof-Cotlar-Coxeter}
	h\cdot H_s^{+} = H_s^{-},\quad h\cdot H_s^{-} \subsetneq H_s^{-},\quad\text{or}\quad h\cdot H_s^{+} \subsetneq H_s^{-}.
\end{equation}	
Firstly, we assume that $g\cdot H_s^{+} \subsetneq  H_s^{+}$. Note that $g\cdot H_s^{+} \subsetneq  H_s^{+} \iff H_s^{-} \subsetneq g\cdot H_s^{-}$.
Therefore, if the first case in \eqref{equation; geometrical proof-Cotlar-Coxeter} holds, we have 
$$h\cdot H_s^{+} = H_s^{-}\subsetneq g\cdot H_s^{-},$$
and then
$$g^{-1}h\cdot H_s^{+} = g^{-1}\cdot H_s^{-}\subsetneq H_s^{-}.$$
Then by applying Proposition \ref{prop;nested-condition} (ii) again, we have that $$m(g^{-1}h) = m(g^{-1}) = -1.$$
The arguments are similar for other two cases in \eqref{equation; geometrical proof-Cotlar-Coxeter}, where we also get that  for any $h\in W$, $m(g^{-1}h) = m(g^{-1}) = -1$.
Now we consider the case $g\cdot H_s^{-} \subsetneq  H_s^{+}$.  Note that $g\cdot H_s^{-} \subsetneq  H_s^{+} \iff H_s^{-} \subsetneq g\cdot  H_s^{+}$.
If the first case  in \eqref{equation; geometrical proof-Cotlar-Coxeter} holds, we have  
$$h\cdot H_s^{+} = H_s^{-}\subsetneq g\cdot H_s^{+},$$
and then
$$g^{-1}h\cdot H_s^{+} = g^{-1}\cdot H_s^{-}\subsetneq H_s^{+}.$$
It follows from Proposition \ref{prop;nested-condition} (i) that $$m(g^{-1}h) = m(g^{-1}) = 1.$$
One can easily check that the other two cases in (\ref{equation; geometrical proof-Cotlar-Coxeter}) also imply that for any $h\in W$,
$m(g^{-1}h) = m(g^{-1}) = 1$. 
The argument is similar for the case when $m(g) = -1$ and $m(h) = 1$, so we omit the proof for this case. Therefore \eqref{eq: Cotlar} is proved.

(ii) Now we show that $m$ is left $ {\rm Stab}_{ H_s^+}$-invariant. 
For any  $h'\in {\rm Stab}_{ H_s^+}$, we have $h'\cdot H_s^{+} = H_s^{+}$ and $h'\cdot H_s^{-} = H_s^{-}$. For both the case $hK\in H_s^{+}$ and the case $hK\in H_s^{-}$, it is easy to see $m(h'h) = m(h)$. 

Applying \ref{prop; Lp-bounded-introduction}, we get that $T_m$ is bounded on $L_p(\mathcal{L}W)$ for any $1< p< \infty$.
\end{proof}


Next, we give a characterization of the multiplier defined in \eqref{eq: def of symbol on W} satisfying \eqref{eq: Cotlar} relative to a subgroup $G_0$. The following result shows that when we want to construct
a multiplier by \eqref{eq: def of symbol on W} which satisfies the Cotlar identity with a smallest subgroup (which is ${\rm Stab}_{ H_s^+}$) being subtracted, the assumption in \ref{theoremA} that $W$ satisfies the nested condition relative to the generator $s$ is optimal.

\begin{prop}\label{prop;equi-char-Cotlar-nested}
    Let $(W,S)$ be a Coxeter system, $s\in S$ and $m_W^s$ be given in \eqref{eq: def of symbol on W}. Let $\Gamma_s$ be the maximal subset in $W$ which satisfies the nested condition \eqref{nested-condition} relative to $s$. Then $m_W^s$ satisfies \eqref{eq: Cotlar} relative to a subgroup $G_0$ if and only if the subset $(W \setminus \Gamma_s) \cup {\rm Stab}_{ H_s^+}$ is contained in $G_0$.
\end{prop}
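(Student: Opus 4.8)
The plan is to reduce the equivalence to a single set equality. Since $W$ is discrete, \eqref{eq: Cotlar} relative to $G_0$ holds precisely when $\big(m_W^s(g)-m_W^s(h)\big)\big(m_W^s(g^{-1}h)-m_W^s(g^{-1})\big)=0$ for every $g\in W\setminus G_0$ and every $h\in W$, and since $m_W^s$ is $\{\pm1\}$-valued this vanishing amounts to $m_W^s(g)=m_W^s(h)$ or $m_W^s(g^{-1}h)=m_W^s(g^{-1})$. Putting
$$
B:=\{\,g\in W:\ \exists\, h\in W,\ m_W^s(g)\ne m_W^s(h)\ \text{and}\ m_W^s(g^{-1}h)\ne m_W^s(g^{-1})\,\},
$$
we see that \eqref{eq: Cotlar} relative to $G_0$ holds if and only if $B\subseteq G_0$; so it suffices to prove $B=(W\setminus\Gamma_s)\cup{\rm Stab}_{H_s^+}$, and then both implications of the proposition are immediate.

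For $(W\setminus\Gamma_s)\cup{\rm Stab}_{H_s^+}\subseteq B$ I would argue in two cases. If $g\in{\rm Stab}_{H_s^+}$, I take $h=s$: then $gK\in g\cdot H_s^+=H_s^+$, while $g^{-1}\in{\rm Stab}_{H_s^+}$ gives $g^{-1}K\in H_s^+$ and $g^{-1}sK\in g^{-1}\cdot H_s^-=H_s^-$, so both factors equal $\pm2$ and $g\in B$. If $g\notin\Gamma_s$, I note that the negation of \eqref{nested-condition} for $g$ is exactly the statement that all four chamber-sets $g\cdot H_s^{\varepsilon}\cap H_s^{\delta}$ ($\varepsilon,\delta\in\{+,-\}$) are nonempty (each of the six relations asserts the emptiness of one of these, and conversely); in particular $g\cdot H_s\ne H_s$. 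Writing $gK\in H_s^{\delta_0}$ and $K\in g\cdot H_s^{\varepsilon_0}$, I pick $h$ with $hK$ in the nonempty set $g\cdot H_s^{-\varepsilon_0}\cap H_s^{-\delta_0}$; then $m_W^s(h)=-m_W^s(g)$, and applying $g^{-1}$ to $K$ and $hK$ sends them into opposite roots of $H_s$, so $m_W^s(g^{-1}h)=-m_W^s(g^{-1})$, whence $g\in B$.

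For the reverse inclusion I would show that if $g\in\Gamma_s\setminus{\rm Stab}_{H_s^+}$ and $h\in W$ satisfies $m_W^s(g)\ne m_W^s(h)$, then $m_W^s(g^{-1}h)=m_W^s(g^{-1})$, i.e.\ $K$ and $hK$ lie in the same root of $g\cdot H_s$. The proof of Proposition~\ref{prop;nested-condition} uses only that $g$ itself satisfies \eqref{nested-condition}, so it is available for $g$ here. If $m_W^s(g)=-1$, then $hK\in H_s^+$, and Proposition~\ref{prop;nested-condition}(ii) implies $H_s^+$ is contained in one of the two roots of $g\cdot H_s$; as $K,hK\in H_s^+$, they lie in that common root. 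If $m_W^s(g)=1$, then $hK\in H_s^-$, and since $g\notin{\rm Stab}_{H_s^+}$ Proposition~\ref{prop;nested-condition}(i) gives $g\cdot H_s^+\subsetneq H_s^+$ or $g\cdot H_s^-\subsetneq H_s^+$; in either case $g\cdot H_s\ne H_s$ and $H_s^-$ lies in a single root $R$ of $g\cdot H_s$. Then $hK$ and $sK$ both lie in $H_s^-\subseteq R$; and since $K$ and $sK$ are $s$-adjacent, Remark~\ref{rem; unique-wall} says the only wall separating them is $H_s$, which differs from $g\cdot H_s$, so $K$ is on the same side of $g\cdot H_s$ as $sK$, giving $K\in R$. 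Hence $K,hK\in R$, as needed; combined with the previous step this yields $B=(W\setminus\Gamma_s)\cup{\rm Stab}_{H_s^+}$.

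I expect the reverse inclusion to be the hard part: it is in effect the computation behind \ref{theoremA}, but now run for an arbitrary $h\in W$ rather than for $h$ in a group satisfying the nested condition, so Proposition~\ref{prop;nested-condition} is no longer available for $h$ (and a priori not for $g^{-1}$ or $g^{-1}h$ either). The resolution is that the side of $g\cdot H_s$ containing the fundamental chamber $K$ is already pinned down by its $s$-neighbour $sK$ together with $g\cdot H_s\ne H_s$, so one only needs Proposition~\ref{prop;nested-condition} applied to $g$ itself.
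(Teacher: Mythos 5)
Your proof is correct and follows the same overall architecture as the paper's: both reduce the equivalence to identifying the set of elements $g$ that admit a witness $h$ violating the identity, and both show this set equals $(W\setminus\Gamma_s)\cup{\rm Stab}_{H_s^+}$ via two inclusions. Your forward inclusion coincides with the paper's — it likewise takes $hK\in H_s^-$ for $g$ in the stabilizer, and a chamber in the diagonally opposite nonempty intersection for $g\notin\Gamma_s$ — though the paper asserts ``without loss of generality'' that $K,gK\in g\cdot H_s^+\cap H_s^+$, whereas you track the two signs $\delta_0,\varepsilon_0$ independently, which is the more careful bookkeeping since $K$ and $gK$ need not lie in the same piece. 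The genuine divergence is in the reverse inclusion: the paper disposes of it with ``By \ref{theoremA}, it is easy to see\dots'', but the proof of \ref{theoremA} applies Proposition \ref{prop;nested-condition} to $h$ as well as to $g$, and in the present proposition $h$ ranges over all of $W$ while only $g\in\Gamma_s$ is known to satisfy \eqref{nested-condition}. Your argument repairs exactly this point: Proposition \ref{prop;nested-condition} applied to $g$ alone shows that the root of $H_s$ containing $hK$ sits inside a single root $R$ of $g\cdot H_s$, and $K$ is then placed in $R$ via its $s$-neighbour $sK$ together with the uniqueness of the wall separating adjacent chambers (Remark \ref{rem; unique-wall}). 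This yields a self-contained proof of the sufficiency direction that never needs the nested condition for $h$, $g^{-1}$ or $g^{-1}h$ — precisely the gap the paper's one-line citation glosses over.
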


\begin{proof}
We will simply denote $m_W^s$ by $m$ in this proof. 
    By \ref{theoremA}, it is easy to see if $(W \setminus \Gamma_s) \cup {\rm Stab}_{ H_s^+}\subseteq G_0$, then $m$ satisfies \eqref{eq: Cotlar} relative to $G_0$. 
   Now we show the reverse implication by showing  that
    for any $g\in (W \setminus \Gamma_s) \cup {\rm Stab}_{ H_s^+}$, 
    there exists $h\in W$ such that 
\begin{equation}\label{eq;7}
    m(h) \ne m(g) \quad \text{and}\quad m(g^{-1}h) \ne m(g^{-1}).
\end{equation}
First note that when $g\in {\rm Stab}_{ H_s^+}$, for any $h\in W$ such that $hK\in H_s^-$, \eqref{eq;7} holds.
Then it is enough to consider $g\in W \setminus \Gamma_s$. Since $g$ is an arbitrary element in $W$ which fails to satisfy \eqref{nested-condition}, we know that all the following intersections contain at least one chamber:
$$g\cdot H_s^+\cap H_s^+, \quad g\cdot H_s^+\cap H_s^-, \quad g\cdot H_s^-\cap H_s^+, \quad \text{and}\quad  g\cdot H_s^-\cap H_s^-.$$
Without loss of generality, we can assume that $K, gK\in g\cdot H_s^+ \cap H_s^+$. Consider $h\in W$ with $hK\in g\cdot H_s^-\cap H_s^-$. Then $m(g)\ne m(h)$. However, $g^{-1}hK\in H_s^-\cap g^{-1}\cdot H_s^-$ and $g^{-1}K\in H_s^+$, which implies that $m(g^{-1}h) \ne m(g^{-1})$, so
\eqref{eq;7} holds. 
\end{proof}

The nested condition was defined using inclusions between roots in Coxeter complexes, which is less straightforward to check in practice. 
We will show in the following that the  nested condition \eqref{nested-condition} admits an equivalent algebraic characterization in terms of the braid relation between generators.

\begin{prop}\label{prop;W-satisfies-NestC-equi-to-WRA}
    Let $(W,S)$ be a Coxeter system and $s\in S$. Let $N_s\subseteq W$ be the set of elements whose reduced expressions neither start by $s$ nor end by $s$. Then the following statements are equivalent.
    \begin{enumerate}
        \item [(i)] $W$ satisfies the nested condition \eqref{nested-condition} relative to $s$.

        \item [(ii)] For any $u\in S\setminus \{s\}$, $\mathrm{m}_{su}\in \{2,\infty\}$.

    \end{enumerate}
    
\end{prop}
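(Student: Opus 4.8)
The plan is to translate the inclusion-based nested condition \eqref{nested-condition} into a statement about crossing walls, and then into one about conjugacy of reflections. Recall from the proof of Proposition~\ref{prop;equi-char-Cotlar-nested} that the nested condition holds at a given $g\in W$ unless the walls $H_s$ and $g\cdot H_s=H_{gsg^{-1}}$ are distinct and all four sets $g\cdot H_s^{\pm}\cap H_s^{\pm}$ contain a chamber (when the walls coincide, $g\cdot H_s^{+}$ is automatically $H_s^{+}$ or $H_s^{-}$; when they are distinct, one of the four root-inclusions in \eqref{nested-condition} holds precisely when one of those four intersections is empty). Hence $W$ satisfies the nested condition relative to $s$ if and only if no $W$-translate of $H_s$ crosses $H_s$, i.e.\ for every reflection $r\in\Gamma$ conjugate to $s$ with $r\neq s$ the walls $H_s$ and $H_r$ do not cross. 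It is a standard fact about Coxeter complexes (equivalently Davis complexes, since the nested condition only records which chambers lie on which side of a wall, cf.\ Remark~\ref{rem: Coxeter to Davis}) that two distinct walls $H_s,H_r$ cross if and only if $\langle s,r\rangle$ is finite: in the $\mathrm{CAT}(0)$ Davis complex $\Xi(W,S)$ the set $H_s\cap H_r$ is the fixed-point set of $\langle s,r\rangle$, which is nonempty exactly when this subgroup is finite, and disjoint convex walls are nested. So the proposition is equivalent to the assertion that $\mathrm{m}_{su}\in\{2,\infty\}$ for all $u\in S\setminus\{s\}$ if and only if there is no reflection $r\neq s$ conjugate to $s$ with $\langle s,r\rangle$ finite.

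For (i)$\Rightarrow$(ii) I argue the contrapositive by exhibiting an explicit crossing. Suppose $3\le\mathrm{m}_{su}=:m<\infty$ for some $u\neq s$, and work inside the standard parabolic $P=W_{\{s,u\}}$, a dihedral group of order $2m$, in which lengths agree with those of $W$. Take $g=u$, so that $g\cdot H_s=H_{usu}$ with $usu\neq s$ (as $m\ge3$) by Proposition~\ref{Prop: actions on half spaces}(i), and by Proposition~\ref{Prop: actions on half spaces}(ii) and Remark~\ref{rem: expression of half spaces}, $g\cdot H_s^{+}=H_{usu}^{+}=\{hK: l(h^{-1}u)<l(h^{-1}us)\}$. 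A short length computation in $P$ then places a chamber in each of the four quadrants: $K\in H_s^{+}\cap H_{usu}^{+}$; $sK\in H_s^{-}\cap H_{usu}^{+}$ (using $l(su)=2<3=l(sus)$); $usK\in H_s^{+}\cap H_{usu}^{-}$ ($us$ has the unique reduced expression $us$, which does not start with $s$); and $w_0K\in H_s^{-}\cap H_{usu}^{-}$, where $w_0$ is the longest element of $P$ (it has a reduced expression starting with $s$, and $l(w_0^{-1}u)=m-1>m-2=l(w_0^{-1}us)$, the last equality because $w_0u$ has a reduced expression ending in $s$). Since all four quadrants are occupied, none of the six cases of \eqref{nested-condition} holds at $g=u$, so $W$ fails the nested condition relative to $s$.

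For (ii)$\Rightarrow$(i), assume $\mathrm{m}_{su}\in\{2,\infty\}$ for every $u\in S\setminus\{s\}$. Then $s$ is conjugate in $W$ to no generator other than itself, since two generators of a Coxeter group are conjugate if and only if they are joined by a path of odd-labelled edges in the Coxeter diagram, and no edge at $s$ carries an odd label. Suppose, for contradiction, that $r\neq s$ is a reflection conjugate to $s$ with $\langle s,r\rangle$ finite. By the classical fact that every finite subgroup of a Coxeter group lies in a conjugate of a finite standard parabolic, after conjugating we may take $s,r\in W_I$ with $W_I$ finite; since every reflection of $W$ lying in $W_I$ is a reflection of $(W_I,I)$, hence $W_I$-conjugate to an element of $I$, and $s$ is conjugate to no generator but $s$, we conclude $s\in I$. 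Finiteness of $W_I$ forces all $\mathrm{m}_{su}$ with $u\in I\setminus\{s\}$ to be finite, hence equal to $2$, so $s$ commutes with all of $I\setminus\{s\}$ and $W_I\cong\langle s\rangle\times W_{I\setminus\{s\}}$ — in which $s$ is its own unique conjugate, contradicting $r\neq s$. Hence no such $r$ exists, and by the reformulation $W$ satisfies the nested condition relative to $s$.

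The main obstacle is the direction (ii)$\Rightarrow$(i): a direct induction on $l(g)$ founders because shortening the witness $g$ can change the wall $g\cdot H_s$, so the geometric detour seems unavoidable — crossings of $H_s$ with its translates are governed by fixed points of finite dihedral subgroups on the $\mathrm{CAT}(0)$ Davis complex, and hypothesis (ii) is exactly what prevents a nontrivial conjugate of $s$ from sitting inside a finite parabolic alongside $s$. The standard inputs one should double-check are: distinct walls do not cross when their reflections generate an infinite subgroup; finite subgroups of Coxeter groups lie in conjugates of finite standard parabolics; reflections contained in a parabolic $W_I$ are reflections of $(W_I,I)$; and the odd-path criterion for conjugacy of generators (all standard; see e.g.\ \cite{davis2012geometry}).
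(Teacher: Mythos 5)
Your proof is correct, but the route is genuinely different from the paper's, most notably in the direction (ii)$\Rightarrow$(i). The paper argues entirely with reduced words: for $g\in N_s\setminus W_{T_s}$ it shows $g\cdot H_s^-\subsetneq H_s^+$ by observing that a trailing $s$ in a reduced expression of $h^{-1}$ can never cancel through a letter that is free with $s$ under $M$-operations, and then it propagates this to the remaining cases $g\in W_{T_s}s$, $s(N_s\setminus W_{T_s})$, etc. You instead reformulate the nested condition as ``no wall $g\cdot H_s\neq H_s$ crosses $H_s$'' (this reformulation is sound and is exactly what Proposition~\ref{prop;equi-char-Cotlar-nested} already uses), reduce crossing to finiteness of $\langle s,gsg^{-1}\rangle$ via the $\mathrm{CAT}(0)$ geometry of the Davis complex, and then rule out a reflection $r\neq s$ conjugate to $s$ with $\langle s,r\rangle$ finite by combining three classical facts (finite subgroups sit in conjugates of finite standard parabolics; reflections of $W$ lying in $W_I$ are reflections of $(W_I,I)$; the odd-path criterion for conjugacy of generators). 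Your (i)$\Rightarrow$(ii) is close in spirit to the paper's — both are length computations in the dihedral parabolic $W_{\{s,u\}}$ — though you phrase it as an explicit four-quadrant witness at $g=u$ rather than the paper's argument with the longest element. The trade-off: your argument is shorter and more conceptual, identifying the nested condition with a purely group-theoretic statement about the conjugacy class of $s$, but it imports several nontrivial (if standard) external results, whereas the paper's combinatorial case analysis is self-contained and, importantly, is reused verbatim to derive the finer six-case dictionary of Proposition~\ref{prop; equi-char-Nest-algebraic-forms}, which the building arguments in Section~\ref{section;building} depend on; your proof establishes the equivalence but would not by itself supply that dictionary.
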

\begin{proof}
    We first show that (i) implies (ii). If $W$ satisfies the nested condition relative to $s$, then for any $u\in S\setminus \{s\}$, we have that $uK\in H_s^+$. Note that $l(u)<l(us)$, which implies $d(uK,K)<d(uK,sK)$. By Remark \ref{rem: expression of half spaces}, 
    this is equivalent to saying $uK\in H_s^+$. Applying Proposition \ref{prop;nested-condition}, one of the following cases holds:
	$$u\cdot H_s^{+} = H_s^{+},\quad u\cdot H_s^{+} \subsetneq H_s^{+},\quad\text{or}\quad u\cdot H_s^{-} \subsetneq H_s^{+}.$$	
    Now we show that $u\cdot H_s^+\subsetneq H_s^+$ will not happen. Suppose this is the case. Applying the group action of $u$, we have  $H_s^+\subsetneq u\cdot H_s^+$ and then $u\cdot H_s^+\subsetneq u\cdot H_s^+$, which is impossible. 
Now to obtain (ii), it suffices to show that
the following statements hold for any $u\in S\setminus \{s\}$: 
	\begin{enumerate}
	    \item [(a)] $\mathrm{m}_{su} = 2$ if and only if $u\cdot H_s^+ = H_s^+$.
        \item [(b)] If $u\cdot H_s^-\subsetneq H_s^+$, then $\mathrm{m}_{su}= \infty$.
	\end{enumerate}
Statement (a) follows directly from Proposition \ref{Prop: actions on half spaces}. For  (b), assume $\mathrm{m}_{su} \ne \infty$ then there exists $ k \in\mathbb{N}\setminus \{1,2\}$ such that $\mathrm{m}_{su} = k$. Since $(su)^k=1$, the largest word length for elements in $W_{\{s,u\}}$ will be $k$. We take $g\in W_{\{s,u\}}$ such that $l(g) = k$, then $g$ can be written as  $g = usu\ldots = sus\ldots$. Since $g$ admits a reduced expression starting with $s$, by Proposition \ref{prop; charac-root with length} and Corollary \ref{cor; length-starting}, we have $gK\in H_s^{-}$. Then by our assumption that $u\cdot H_s^-\subsetneq H_s^+$, we have $ugK\in  H_s^{+}$. This implies that $l(ug)<l(sug)$ by Proposition \ref{prop; charac-root with length}, which is not true since $k -1 = l(ug)$ and $l(sug) = k-2$. So we have  $\mathrm{m}_{su}= \infty$. 

\medskip

Next we show that (ii) implies (i).  Let $g\in N_s$. Recall that $T_s=\{u\in S: \mathrm{m}_{su}=2\}$ then $W_{T_s}\subseteq N_s$. If $g\in W_{T_s}$, 
    then by Proposition \ref{Prop: actions on half spaces},
	$g\cdot H_s^- = H_s^-$. On the other hand, if $g\in N_s\setminus W_{T_s}$, then in any reduced expression of $g$, there is at least one letter $s_\ell\in S$ such that $\mathrm{m}_{ss_{\ell}} =\infty$. We claim in this case that $g\cdot H_s^- \subsetneq H_s^+$. Note that by Proposition \ref{prop; charac-root with length}, Proposition \ref{Prop: actions on half spaces} and Corollary \ref{cor; length-starting}, we have 
    \begin{align*}
      hK\in g\cdot H_s^- &   \iff   d(hK, gK)>d(hK, gsK)\\
      & \iff  l(h^{-1}g)>l(h^{-1}gs)\\
      & \iff \text{There exists a reduced expression of } h^{-1}g \text{ ending with } s,
    \end{align*}
and 	
\begin{align*}
 hK\in H_{s}^+ & \iff    d(hK, K)<d(hK, sK)\\
 & \iff l(h^{-1})<l(h^{-1}s)\\
 &\iff \text{There is no reduced expression of } h^{-1} \text{ ending with } s.
\end{align*}
 Take $hK\in g\cdot H_s^- $, then there exists a reduced expression of $h^{-1}g$ ending with $s$.  Suppose $hK\notin H_{s}^+ $, then there is a reduced expression of $h^{-1}$ ending with $s$. 
 Since  $g\in N_s$ and there is a letter $s_\ell\in S$ in any reduced expression of $g$ such that $\mathrm{m}_{ss_{\ell}} =\infty$, when multiplying $h^{-1}$ and $g$, the $s$ at the end of the reduced expression of $h^{-1}$ can never reach the end of that of $h^{-1}{g}$ by $M$-operations, which contradicts the assumption that $hK\in g\cdot H_s^- $. Therefore, $hK$ is also in $H_{s}^+ $, which
shows the inclusion $g\cdot H_s^- \subseteq H_s^+$. 
Moreover, it is easy to see that $gK\in g\cdot H_s^+$. However,  since $g\in N_s$, we have $l(g^{-1})<l(g^{-1}s)$, which shows that $gK\in H_s^+$, so 
$gK\notin H_s^-$. Therefore, if $g\in N_s$, then we have either $g\cdot H_s^{-} = H_s^{-}$ or $g\cdot H_s^{-} \subsetneq H_s^{+}$.
\medskip

 If $g$ admits a reduced expression ending with $s$ but not starting with $s$ i.e. $g = g_1s$ where $g_1\in N_s\setminus \{1\}$, then for the case $g_1\in N_s\setminus W_{T_s}$, it follows from the claim in the previous paragraph and Proposition \ref{Prop: actions on half spaces} (iii) that
	$$g\cdot H_s^+ = g_1s\cdot H_s^+ = g_1\cdot H_s^- \subsetneq H_s^{+}.$$
    If $1\ne g_1\in W_{T_s}$, Proposition \ref{Prop: actions on half spaces} (iii) implies that $g\cdot H_s^- = H_s^+$. If $g$ admits a reduced expression starting with $s$ but not ending with $s$ i.e. $g = sg_2$ where $g_2\in N_s\setminus \{1\}$, then for the case $g_2\in N_s\setminus W_{T_s}$, the claim in the previous paragraph and Proposition \ref{Prop: actions on half spaces} (iii) imply that
	$$g\cdot H_s^- = sg_2\cdot H_s^- \subsetneq s\cdot H_s^+ = H_s^-.$$
    If $1\ne g_2\in W_{T_s}$, then $g = sg_2$ and $g\cdot H_s^- = H_s^+$. 
	If $g$ admits a reduced expression starting with $s$ and ending with $s$ i.e. $g = sg_3s$ with $g_3\in N_s\setminus W_{T_s}$ or $g = s$, then similarly if $g= sg_3s$ with $g_3\in N_s\setminus W_{T_s}$, we have 
	$$g\cdot H_s^+ = sg_3s\cdot H_s^+ = sg_3\cdot H_s^- \subsetneq s\cdot H_s^+ = H_s^-.$$
    If $g= s$, by  Proposition \ref{Prop: actions on half spaces} (iii), $g\cdot H_s^+ = H_s^-$. 
    Therefore, $W$ satisfies the nested condition relative to $s$.
\end{proof}

\begin{cor}\label{Cor;RA-Coxeter gp-NestC}
 Right-angled Coxeter groups satisfy the  nested condition \eqref{nested-condition} relative to all generators. 
\end{cor}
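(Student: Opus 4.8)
The plan is to deduce this immediately from Proposition \ref{prop;W-satisfies-NestC-equi-to-WRA}, which characterizes the nested condition relative to a generator $s$ by the purely algebraic requirement that $\mathrm{m}_{su}\in\{2,\infty\}$ for every $u\in S\setminus\{s\}$. So the only thing to do is match the definition of a right-angled Coxeter group against condition (ii) of that proposition.

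Concretely, I would argue as follows. Let $(W,S)$ be a right-angled Coxeter group, so by definition $\mathrm{m}_{st}\in\{2,\infty\}$ for all distinct $s,t\in S$. Fix an arbitrary generator $s\in S$. Then for every $u\in S\setminus\{s\}$ we have $\mathrm{m}_{su}\in\{2,\infty\}$, which is precisely statement (ii) of Proposition \ref{prop;W-satisfies-NestC-equi-to-WRA}; hence statement (i) of that proposition holds, i.e.\ $W$ satisfies the nested condition \eqref{nested-condition} relative to $s$. Since $s\in S$ was arbitrary, $W$ satisfies the nested condition relative to all generators.

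There is no real obstacle here: all the substantive work — translating the geometric inclusions among roots in $\Sigma(W,S)$ into a condition on the Coxeter matrix — has already been carried out in Proposition \ref{prop;W-satisfies-NestC-equi-to-WRA}, and this corollary is just the observation that the defining condition of right-angled Coxeter groups is exactly that algebraic condition, uniformly over all choices of the distinguished generator. Combined with \ref{theoremA}, it follows in particular that the multiplier $m_W^s$ is $L_p(\mathcal{L}W)$-bounded for every right-angled Coxeter group and every generator $s$.
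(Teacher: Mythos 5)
Your proof is correct and is exactly the argument the paper intends: the corollary is stated immediately after Proposition \ref{prop;W-satisfies-NestC-equi-to-WRA} precisely because condition (ii) there is, for every choice of generator, the defining property of a right-angled Coxeter group. Nothing further is needed.
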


\begin{rem}
    It is easy to see from the argument for (ii) implying (i) in the above proof, that the inclusion relations for $g\notin N_s$ can be deduced from that for $g\in N_s$. Therefore,  to check whether $W$ satisfies the  nested condition \eqref{nested-condition} relative to $s$, it is sufficient to check that for $g\in N_s$, we have either $g\cdot H_s^- = H_s^-$ or $g\cdot H_s^-\subsetneq H_s^+$.
\end{rem}

As a consequence of Proposition \ref{prop;W-satisfies-NestC-equi-to-WRA}, the six cases in \eqref{nested-condition} can be reformulated with respect to the algebraic expressions of $g$ in a subset of $W$. These expressions will be useful in the study of buildings in Section \ref{section;building}.
\begin{prop}\label{prop; equi-char-Nest-algebraic-forms}
    Let $(W,S)$ be a Coxeter system, $s\in S$ and $N_s\subseteq W$ be the set of elements whose reduced expressions neither start by $s$ nor end by $s$. Assume that $W$ satisfies the nested condition relative to $s$. Then for any $g\in W$, the six cases in \eqref{nested-condition} can be reformulated in the following form:
    $$g \in W_{T_s},\quad g \in W_{T_s}s,\quad g \in ( N_s\setminus W_{T_s})s,\quad g \in s( N_s\setminus W_{T_s})s,\quad g\in N_s\setminus W_{T_s}\quad \text{or}\quad g \in s( N_s\setminus W_{T_s}).$$
\end{prop}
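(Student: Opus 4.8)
My plan is to read the desired statement off the proof of Proposition~\ref{prop;W-satisfies-NestC-equi-to-WRA}: in the course of proving ``(ii)$\Rightarrow$(i)'' there, one already sorts the elements of $W$ into exactly the six algebraic families appearing in the statement and computes the image $g\cdot H_s^{\pm}$ in each. So the proof reduces to recording the dictionary, matched against the six cases of \eqref{nested-condition} in the listed order: $g\cdot H_s^+=H_s^+\Leftrightarrow g\in W_{T_s}$; $g\cdot H_s^+=H_s^-\Leftrightarrow g\in W_{T_s}s$; $g\cdot H_s^+\subsetneq H_s^+\Leftrightarrow g\in(N_s\setminus W_{T_s})s$; $g\cdot H_s^+\subsetneq H_s^-\Leftrightarrow g\in s(N_s\setminus W_{T_s})s$; $g\cdot H_s^-\subsetneq H_s^+\Leftrightarrow g\in N_s\setminus W_{T_s}$; and $g\cdot H_s^-\subsetneq H_s^-\Leftrightarrow g\in s(N_s\setminus W_{T_s})$. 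The implications ``algebraic family $\Rightarrow$ geometric case'' are precisely the computations already carried out there, obtained from Proposition~\ref{Prop: actions on half spaces}(iii), the identity $s\cdot H_s^{\pm}=H_s^{\mp}$, and the two facts that $g\in W_{T_s}$ implies $g\cdot H_s^-=H_s^-$ and $g\in N_s\setminus W_{T_s}$ implies $g\cdot H_s^-\subsetneq H_s^+$.

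The content that still needs to be supplied is that the six families genuinely exhaust (hence partition) $W$. For this I would split $W$ according to whether $s$ is a left descent of $g$ (some reduced expression starts with $s$, equivalently $l(sg)<l(g)$) and whether it is a right descent, using Corollary~\ref{cor; length-starting}. The ``neither'' case is $g\in N_s=W_{T_s}\sqcup(N_s\setminus W_{T_s})$. The ``right only'' case gives a reduced factorization $g=g_1s$ where $g_1$ cannot end in $s$ (reducedness), cannot start with $s$ (else $s$ is a left descent), and cannot lie in $W_{T_s}$ (else $g=g_1s=sg_1$, again a left descent); so $g\in(N_s\setminus W_{T_s})s$, and symmetrically ``left only'' gives $s(N_s\setminus W_{T_s})$. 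The ``both'' case is the delicate one: with $h=sg$ one has $l(sgs)\in\{l(g)-2,\,l(g)\}$; if $l(sgs)=l(g)-2$, stripping both flanking $s$'s yields $g=sg_3s$ with $g_3\in N_s$, and $g_3\in W_{T_s}$ would force $g=g_3\in W_{T_s}$ with no $s$-descent, a contradiction, so $g_3\in N_s\setminus W_{T_s}$; if $l(sgs)=l(g)$, an exchange/deletion argument forces $sgs=sg$, i.e.\ $h=sg$ commutes with $s$, whence — using Tits' theorem (Proposition~\ref{them; M-operation}) together with the nested hypothesis $\mathrm{m}_{su}\in\{2,\infty\}$, which is exactly what prevents the leading $s$ of a reduced word for $sh$ from being braided past any letter outside $T_s$ — one gets $h\in W_{T_s}$, so $g=sh=hs\in W_{T_s}s$. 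Pairwise disjointness of the six families is then automatic once we know they cover $W$, because the six cases of \eqref{nested-condition} are mutually exclusive and each family implies its designated case.

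The main obstacle is this ``both descents'' analysis, and inside it the fact that, under the nested condition relative to $s$, the centralizer of $s$ in $W$ is $\langle s\rangle\times W_{T_s}$: that is what forces an element of $N_s$ commuting with $s$ to lie in $W_{T_s}$, hence that the only elements with two $s$-descents failing to be of the form $sg_3s$ with $g_3\in N_s\setminus W_{T_s}$ are exactly those of $W_{T_s}s$. Once the partition is in hand the proposition is immediate: each $g\in W$ lies in exactly one of the six families, that family determines which of the six mutually exclusive cases of \eqref{nested-condition} holds, and the nested condition guarantees one of them does hold.
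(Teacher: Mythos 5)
Your proof is correct, but the converse direction is organized differently from the paper's. The paper proves each implication ``geometric case $\Rightarrow$ algebraic family'' directly: from an inclusion such as $g\cdot H_s^-\subsetneq H_s^+$ it reads off the positions of $gK$ and $g^{-1}K$ relative to $H_s^{\pm}$ and converts these, via Proposition \ref{prop; charac-root with length} and Corollary \ref{cor; length-starting}, into descent conditions on $g$ and $g^{-1}$ (and excludes $W_{T_s}$ using the already-settled first case). You instead prove a purely combinatorial statement — that the six algebraic families exhaust $W$ — by splitting $W$ according to left/right $s$-descents, and then deduce all six equivalences formally from the mutual exclusivity of the six cases of \eqref{nested-condition}. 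Both arguments share the same combinatorial core, namely the fact that under the nested condition an element of $N_s$ commuting with $s$ lies in $W_{T_s}$ (the paper needs this for the cases $g\cdot H_s^+=H_s^{\pm}$; you need it in the ``both descents'' subcase $l(sgs)=l(g)$). What your route buys is that the remaining four cases come for free once the partition is established; what it costs is two extra verifications that the paper's route does not need: exhaustiveness of the families (your ``both descents'' analysis, which is the genuinely delicate part and which you handle correctly via the standard dichotomy $l(sgs)\in\{l(g)-2,\,l(g)\}$) and the pairwise exclusivity of the six geometric cases (true, and easy by complementation, e.g.\ $g\cdot H_s^+\subsetneq H_s^+$ forces $H_s^-\subsetneq g\cdot H_s^-$, which rules out $g\cdot H_s^-\subsetneq H_s^-$, but you should record it since your whole deduction hinges on it). Two small points: the displayed identity ``$sgs=sg$'' in the second subcase is a typo — the exchange argument yields $sgs=g$, equivalently that $h=sg$ commutes with $s$, which is what you in fact use; and your appeal to Tits' theorem to get $h\in W_{T_s}$ from $h\in N_s$ and $hs=sh$ is stated at the same level of detail as the paper's corresponding step, so it is acceptable, though a reader would benefit from the one-line reason that braid moves involving $s$ are only commutations with letters of $T_s$, so the letter count of $s$ and its ability to cross non-commuting letters are both controlled.
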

\begin{proof}
    By the proof of Proposition \ref{prop;W-satisfies-NestC-equi-to-WRA}, the six algebraic expressions in the statement of the proposition imply the six cases in  \eqref{nested-condition}. Thus it is enough to show the other side. Recall that $T_s=\{t\in S: \mathrm{m}_{st}=2\}$.
    First, we show that $g\cdot H_s^+ = H_s^+$ implies $g\in W_{T_s}\subseteq N_s$. Since $g\in {\rm Stab}_{ H_s^+} = \{w\in W: w\cdot H_s^+ = H_s^+\}$, by Proposition \ref{Prop: actions on half spaces}, we know $gs = sg$. Let $s_1\ldots s_k$ be a reduced expression of $g$. By Proposition \ref{prop;W-satisfies-NestC-equi-to-WRA}, we have $\mathrm{m}_{ss_j}\in \{1,2,\infty\}$ for any $j =1,\ldots,k$. In this case, $gs = sg$ implies that $\mathrm{m}_{ss_j} = 2$ for any $s_j\ne s$ with $j\in\{1,\ldots,k\}$. If there is some $i\in \{1,\ldots,k\}$ such that $s_i =s$, then $g = g's$ for some $g'\in W_{T_s}$.  By Proposition  \ref{Prop: actions on half spaces} (iii), we get $g\cdot H_s^+ = H_s^-$. This contradicts our assumption that $g\in {\rm Stab}_{ H_s^+}$. Thus $g\in W_{T_s}$. By a similar argument, we also see that $g\cdot H_s^+ = H_s^-$ implies $g = g_1s$ for some $g_1\in W_{T_s}$.

    Next, we show that $g\cdot H_s^-\subsetneq H_s^+$ implies $g\in N_s\setminus W_{T_s}$. Note that $g\cdot H_s^-\subsetneq H_s^+$ implies that $gK$ and $gsK$ are in $H_s^+$. Otherwise, $H_s$ will separate $gK$ and $gsK$. The uniqueness of walls separating adjacent chambers (see Remark \ref{rem; unique-wall}) ensures that $g\cdot H_s = H_s$ which contradicts our assumption that $g\cdot H_s^-\subsetneq H_s^+$. It then follows from $g\cdot H_s^-\subsetneq H_s^+$ i.e. $g^{-1}\cdot H_s^-\subseteq H_s^+$ that $gK\in H_s^+$ and $g^{-1}K\in H_s^+$. By Proposition \ref{prop; charac-root with length}  and Corollary \ref{cor; length-starting}, $g$ and $g^{-1}$ do not admit reduced expression starting with $s$, then $g\in N_s$. This together with the discussion of the first case shows that $g\in N_s\setminus W_{T_s}$. 

    For the rest three cases, the arguments are similar so we only give the proof for the implication that  $g\cdot H_s^+ \subsetneq H_s^+$ implies $g = g_2s$ for some $g_2\in N_s\setminus W_{T_s}$. Since $g\cdot H_s^+\subsetneq H_s^+$, i.e. $g^{-1}\cdot H_s^-\subsetneq H_s^-$, we have $gK\in H_s^+$ and $g^{-1}K\in H_s^-$. By Proposition \ref{prop; charac-root with length}  and Corollary \ref{cor; length-starting}, $g$ admits a reduced expression ending with $s$ but not starting with $s$. Therefore $g = g_2s$ for some $g_2\in N_s\setminus W_{T_s}$.
\end{proof}
\begin{rem}
  By Proposition \ref{prop; charac-root with length} and  Corollary \ref{cor; length-starting}, we can easily see that the multiplier given in \eqref{eq: def of symbol on W} admits an equivalent algebraic form in terms of the length function on $W$:
\begin{equation}\label{eq;algebric-def-symbol-w RA-Cg}
	m_W^s(g) := \left\{\begin{array}{l}
		1, \quad \text{if } \; l(sg) > l(g)\\
		-1, \quad \text{if } \; l(sg) < l(g),
	\end{array} \right.
	\end{equation}  
or equivalently, 
\begin{equation}\label{eq;algebric-def-symbol-w}
	m_W^s(g) := \left\{\begin{array}{l}
		1, \quad \text{if there is no reduced expression of }w \text{ starting with }s\\
		-1, \quad \text{if there is a reduced expression of }w \text{ starting with }s.
	\end{array} \right.
	\end{equation}
Due to the algebraic description of the nested condition given in Proposition \ref{prop; equi-char-Nest-algebraic-forms},   \ref{theoremA} admits a purely algebraic proof using only the reduced expressions of elements in Coxeter groups. We leave this for the interested reader to check. 
\end{rem}

\subsection{Hilbert transforms on general Coxeter groups}

In this subsection, the main goal is to prove \ref{theoremB}. To this end, 
 we will show that every finitely generated Coxeter group admits a torsion-free subgroup of finite index which satisfies the nested condition \eqref{nested-condition} relative to a certain generator. Therefore, 
  we can define multipliers on those finite index subgroups that satisfy \eqref{eq: Cotlar} and extend them to the whole group. To make connections between the torsion-freeness and the nested condition for roots, we will make use of the locally finite property of Davis complexes. In the following part of this subsection, we will consider the nested condition for roots in Davis complexes which are subcomplexes of Coxeter complexes
  (see Remark \ref{rem;locally finite prop}).
  

Every Coxeter group $W$ admits a torsion-free subgroup of finite index (see \cite[Corollary D.1.4]{davis2012geometry}), and we denote it by $W_0$.  It was shown in \cite{davis2012geometry} (originally due to Millson \cite{Mil76}) that any torsion-free subgroup $W_0$ such that $sW_0s = W_0$ for a generator $s\in S$ will satisfy a \textit{trivial intersection property} in the Davis complex of $W$, and we list the corresponding result in the following. Note that in the proof of the following lemma, the locally finite property of Davis complexes is substantially used. 

\begin{lem}(\cite[Lemma 14.1.8]{davis2012geometry})\label{lem;nested-properties for torsion-free normal subg}
 Let  $(W,S)$ be a Coxeter system and $W_0\subseteq W$ be a torsion-free subgroup. Consider the Davis complex of $W$ (denoted by $\Xi(W,S)$) and the walls in it. If there is $s\in S$ such that $sW_0s = W_0$, then the pair $(W_0,s)$ has the following trivial intersection property, 
	$$g\cdot H_s = H_s\quad \text{or}\quad g\cdot H_s\cap H_s = \emptyset,$$
	for any $g\in W_0$. In particular, if $W_0$ is a normal and torsion-free subgroup of $W$, then for any $u\in S$, $(W_0,u)$ satisfies the trivial intersection property.
\end{lem}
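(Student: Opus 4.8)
The plan is to exploit the one structural difference between the Davis complex and the Coxeter complex that matters here: $\Xi(W,S)$ is locally finite (Remark~\ref{rem;locally finite prop}) while $W$ still acts simply transitively on its chambers, so every simplex of $\Xi(W,S)$ has \emph{finite} stabiliser in $W$. Indeed, if a simplex $\sigma$ is contained in the chambers $C_1,\dots,C_k$ (finitely many, by local finiteness), then $\mathrm{Stab}_W(\sigma)$ permutes the set $\{C_1,\dots,C_k\}$, and since only the identity of $W$ fixes a given chamber, this permutation action is faithful; hence $\mathrm{Stab}_W(\sigma)$ embeds into a finite symmetric group. Granting this, the lemma reduces to a short dihedral computation together with the hypothesis $sW_0s=W_0$.

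For the main assertion, fix $g\in W_0$ and argue by contradiction: suppose $g\cdot H_s\neq H_s$ and $g\cdot H_s\cap H_s\neq\emptyset$. By Proposition~\ref{Prop: actions on half spaces}(i), $g\cdot H_s=H_r$ with $r:=gsg^{-1}$. Pick a simplex $\sigma\in H_s\cap H_r$ (nonempty by assumption); by the definition of a wall, $\sigma$ is fixed by both $s$ and $r$, so $\langle s,r\rangle\leq\mathrm{Stab}_W(\sigma)$ is finite. Being generated by two involutions, it is a finite dihedral group, so $sr$ has some finite order $n$; moreover $n\geq 2$, since $sr=e$ would give $r=s$, hence $g\cdot H_s=H_{gsg^{-1}}=H_s$, contrary to assumption. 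Now the hypothesis $sW_0s=W_0$ enters: since $g\in W_0$ we get $sgs\in W_0$, and also $g^{-1}\in W_0$, so $sr=sgsg^{-1}=(sgs)g^{-1}\in W_0$ is a nontrivial torsion element, contradicting torsion-freeness of $W_0$. Hence $g\cdot H_s=H_s$ or $g\cdot H_s\cap H_s=\emptyset$, which is the trivial intersection property for $(W_0,s)$.

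For the ``in particular'' statement, if $W_0$ is normal and torsion-free, then for every $u\in S$ one has $uW_0u=uW_0u^{-1}=W_0$ (using $u^{-1}=u$), so the previous argument applies verbatim with $s$ replaced by $u$, giving the trivial intersection property for every pair $(W_0,u)$.

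I do not expect a genuine obstacle here; the only delicate point is the finiteness of cell stabilisers, which is exactly the local finiteness recorded in Remark~\ref{rem;locally finite prop} combined with the simple transitivity of the chamber action. The subtlety worth underlining is that the statement is \emph{false} on the Coxeter complex as soon as some proper parabolic is infinite (a shared simplex of two distinct walls there need not have finite stabiliser) — this is precisely why the present subsection is carried out in the locally finite Davis complex rather than in $\Sigma(W,S)$.
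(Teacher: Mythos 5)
Your proof is correct and is essentially the argument of the cited source: the paper itself gives no proof of this lemma, only the reference to \cite[Lemma 14.1.8]{davis2012geometry} together with the remark that local finiteness of the Davis complex is the substantive input, which is exactly how you use it (finite simplex stabilisers force $\langle s, gsg^{-1}\rangle$ to be finite dihedral whenever the two walls meet, so $sgsg^{-1}=(sgs)g^{-1}\in sW_0s\cdot W_0=W_0$ would be a nontrivial torsion element). Your closing observation about why the statement can fail in the Coxeter complex when a proper parabolic is infinite is also accurate and explains the paper's switch from $\Sigma(W,S)$ to $\Xi(W,S)$ in this subsection.
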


Although the nested condition in  Definition \ref{Defn;Nested-condition} is defined with respect to Coxeter complexes, when we replace walls and roots in Coxeter complexes by that in Davis complexes, the condition can also be adapted to  Davis complexes. 
Note that chambers in the Davis complex $\Xi(W,S)$ are subcomplexes, and they are not necessarily simplices, but to keep it consistent with Coxeter complexes, we abuse notation here by writing  $gK\in \Xi(W,S)$ instead of $gK\subseteq \Xi(W,S)$ for any $g\in W$.
Now we define a multiplier $m_{W_0}^s$ on $W_0$ similar  as in \eqref{eq: def of symbol on W} but in terms of the Davis complex. Fix $s\in S$, for any $g\in W_0$,
	\begin{equation}\label{eq: multiplier on W0}
		m_{W_0}^s(g) := \left\{\begin{array}{l}
			1, \quad \text{if} \; gK\in H_s^+\\
			-1, \quad \text{if} \; gK\in H_s^-,
		\end{array} \right.
	\end{equation}
    where $H_s^\pm$ are roots in the Davis complex of $W$.

    \begin{lem}\label{cor;torsion-free-cotlar}
        Let $W_0\subseteq W$ be a torsion-free subgroup of $W$ and $s\in S$ such that $sW_0s = W_0$. Then the multiplier $m^s_{W_0}$ defined above  satisfies  \eqref{eq: Cotlar} relative to the subgroup $W_0\cap {\rm{Stab}}_{H_s^+}$. 
    \end{lem}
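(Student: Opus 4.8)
The plan is to reduce the Cotlar identity for $m_{W_0}^s$ to the analogue of Theorem A but carried out inside the Davis complex rather than the Coxeter complex. The key point is that all the geometric facts used in the proof of Theorem A — the characterization of roots by adjacent chambers, the uniqueness of the wall separating two adjacent chambers, the description of the $W$-action on roots, and Proposition \ref{prop;nested-condition} — are valid verbatim for Davis complexes by Remark \ref{rem: Coxeter to Davis} and Remark \ref{rem;locally finite prop}, since Davis complexes are subcomplexes of Coxeter complexes on which $W$ acts freely and transitively on chambers. So the real work is to verify that the pair $(W_0,s)$ satisfies the nested condition \eqref{nested-condition} relative to $s$ when the roots are taken in $\Xi(W,S)$; once that is in hand, the proof of Theorem A(i)–(ii) applies word for word with $W$ replaced by $W_0$ and ${\rm Stab}_{H_s^+}$ replaced by $W_0\cap{\rm Stab}_{H_s^+}$, and then \ref{prop; Lp-bounded-introduction} (equivalently the statement that \eqref{eq: Cotlar} for the symbol implies \eqref{equation; non-commutative Cotlar identity} for $T_{m_{W_0}^s}$) gives the conclusion.

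First I would invoke Lemma \ref{lem;nested-properties for torsion-free normal subg}: since $sW_0s=W_0$, the pair $(W_0,s)$ has the trivial intersection property in $\Xi(W,S)$, i.e. for every $g\in W_0$ either $g\cdot H_s=H_s$ or $g\cdot H_s\cap H_s=\emptyset$. Then I would argue that this dichotomy forces \eqref{nested-condition}. If $g\cdot H_s=H_s$, then $g$ permutes the two roots $H_s^+,H_s^-$, so either $g\cdot H_s^+=H_s^+$ or $g\cdot H_s^+=H_s^-$, giving the first two cases. If instead $g\cdot H_s\cap H_s=\emptyset$, then the wall $g\cdot H_s=H_{gsg^{-1}}$ lies entirely on one side of $H_s$, say inside $H_s^+$ (the $H_s^-$ case is symmetric); since the two roots bounded by $g\cdot H_s$ are $g\cdot H_s^+$ and $g\cdot H_s^-$, and both are connected and disjoint from $H_s$ except possibly along $g\cdot H_s$ which here avoids $H_s$, exactly one of $g\cdot H_s^+$, $g\cdot H_s^-$ is contained in $H_s^+$ and the other contains $H_s^-$; the one contained in $H_s^+$ is then a proper subset of $H_s^+$ (it misses the chamber on the far side of $H_s$), which yields one of $g\cdot H_s^+\subsetneq H_s^+$ or $g\cdot H_s^-\subsetneq H_s^+$. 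Thus $W_0$ satisfies \eqref{nested-condition} relative to $s$ in $\Xi(W,S)$.

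With the nested condition established, I would then run the proof of \ref{theoremA} verbatim in the Davis-complex setting: for $g\in W_0\setminus(W_0\cap{\rm Stab}_{H_s^+})$ and $h\in W_0$, the case analysis using Proposition \ref{prop;nested-condition} (in its Davis-complex form) shows $(m_{W_0}^s(g)-m_{W_0}^s(h))(m_{W_0}^s(g^{-1}h)-m_{W_0}^s(g^{-1}))=0$, and left $W_0\cap{\rm Stab}_{H_s^+}$-invariance is immediate from $h'\cdot H_s^\pm=H_s^\pm$ for $h'\in{\rm Stab}_{H_s^+}$. Finally I would note that $W_0\cap{\rm Stab}_{H_s^+}$ is a subgroup of $W_0$ and that the Plancherel trace on $\mathcal{L}(W_0\cap{\rm Stab}_{H_s^+})$ restricts correctly, so the equivalence between \eqref{eq: Cotlar} and \eqref{equation; non-commutative Cotlar identity} recorded after the definition of the conditional expectation applies and gives the statement.

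The main obstacle I anticipate is the second step — turning the trivial intersection dichotomy into the six-case nested condition — where one must argue cleanly that a wall disjoint from $H_s$ produces a strict inclusion of exactly one of its two roots into one root of $H_s$. This requires a little care about connectedness of roots and about the fact that ``$\subsetneq$'' (not just ``$\subseteq$'') holds, i.e. that there is a chamber of $\Xi(W,S)$ in $H_s^+$ not in $g\cdot H_s^\pm$; the cleanest way is probably to exhibit such a chamber explicitly (for instance, the fundamental chamber $K$ itself, or $sK$, depending on which side of $H_s$ contains $g\cdot H_s$) and to use the adjacent-chambers characterization of roots together with Remark \ref{rem; unique-wall} to rule out $g\cdot H_s=H_s$ in that subcase.
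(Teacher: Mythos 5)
Your proposal is correct and follows essentially the same route as the paper: invoke Lemma \ref{lem;nested-properties for torsion-free normal subg} to get the trivial intersection property, deduce that $W_0$ satisfies \eqref{nested-condition} relative to $s$ in the Davis complex, and then rerun the proof of \ref{theoremA} with $W$ replaced by $W_0$ and ${\rm Stab}_{H_s^+}$ by $W_0\cap{\rm Stab}_{H_s^+}$. The only difference is that you spell out the step the paper dismisses as ``easy to see'' (that the disjoint-wall case forces a strict root inclusion), which is a welcome addition rather than a divergence.
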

    
\begin{proof}
It is easy to see that $(W_0,s)$ having
the trivial intersection property implies $W_0$ satisfies the nested condition \eqref{nested-condition} relative to $s$ with respect to roots in the Davis complex of $W$. Note that by a similar argument to that of \ref{theoremA}, if we only assume a subgroup of $W$ satisfies the nested condition \eqref{nested-condition}, we will be able to show that \eqref{eq: Cotlar} holds relative to the intersection of  $ \rm{Stab}_{H_s^+}$ and the subgroup.
Then the lemma follows by applying Lemma \ref{lem;nested-properties for torsion-free normal subg}.
\end{proof}

Suppose the index of $W_0$ in $W$ is $k\in \mathbb{N}$ and $W= \mathop{\sqcup}\limits_{i=1}^kw_iW_0$, where $w_i\in W$ for $i = 1,\ldots ,k$.
For any $g= w_ih$  with $h\in W_0$, we define the extension of $m_{W_0}^s$ on $W$ by
\begin{equation}\label{eq: def of general symbol}\tag{MU}
m(g)=m_{W_0}^s(h). 
\end{equation}
The Fourier multiplier $T_m$ associated to the $m$ in \eqref{eq: def of general symbol} will give rise to a Hilbert transform on any finitely generated Coxeter group $W$. 
Now we prove \ref{theoremB} which tells us that $T_m$ is bounded on $L_p(\mathcal{L}W)$ for any $1<p<\infty$.

\begin{proof}[Proof of \ref{theoremB}]
 We may  assume without loss of generality that there is $s\in S$ such that $sW_0s = W_0$, since otherwise we can consider the largest normal subgroup $N(W_0) := \mathop{\cap}\limits_{{g\in W}}gW_0g^{-1}$  contained in $W_0$ which will be of finite index and torsion-free as well. By the similar argument as that of \ref{theoremA}, $m_{W_0}^s$ on $W_0$ is left $W_0\cap {\rm{Stab}}_{H_s^+}$-invariant. 
   Moreover, Lemma \ref{cor;torsion-free-cotlar} shows that the multiplier $m_{W_0}^s$ on $W_0$ satisfies the Cotlar identity, and then $T_{m_{W_0}^s}$ is bounded on $L_p(\mathcal{L}W_0)$ for $1<p<\infty$ by \ref{prop; Lp-bounded-introduction}. Since $W_0$ has finite index in $W$, by a standard argument, it is straightforward to deduce the boundedness of $T_m$  on $L_p(\mathcal{L}W)$ from that of $T_{m_{W_0}^s}$ on $L_p(\mathcal{L}W_0)$.
\end{proof}



\subsection{Examples and counterexamples}

In this subsection, we first give some examples of Coxeter groups satisfying the nested condition and explain the geometric model of the Hilbert transforms defined via \eqref{eq: def of symbol on W} through their Coxeter complexes. We then give some examples of Coxeter groups on which the multiplier defined via \eqref{eq: def of symbol on W} does not satisfy \eqref{eq: Cotlar} and construct a subgroup of finite index which satisfies the nested condition. These examples of groups satisfy property (F$\mathbb{R}$), however $L_p$-bounded Hilbert transforms can be constructed on these groups by applying \ref{theoremB}.
\begin{ex}\label{ex; D-infty}
	All right-angled Coxeter groups satisfy the nested condition relative to any generator by Corollary \ref{Cor;RA-Coxeter gp-NestC}.

\end{ex}

\begin{ex}\label{ex; PGL-2-Z-first}
	Let  $\mathrm{PGL}_2(\mathbb{Z})$ be the group which is the quotient of the group of $2\times 2$ invertible matrices over $\mathbb{Z}$ by identifying any matrix with its negative. $\mathrm{PGL}_2(\mathbb{Z})$ is a Coxeter group of three generators (see \cite[Example 2.2.3]{abramenko2008buildings}): 
	$$u = \left(\begin{array}{cc}
		0 & 1\\
		1 & 0\\\end{array} \right),\quad  
		t = \left(\begin{array}{cc}
		-1 & 1\\
		0 & 1\\\end{array} \right)\quad \text{and}\quad 
		s = \left(\begin{array}{cc}
		-1 & 0\\
		0 & 1\\\end{array} \right) $$
such that $\mathrm{m}_{ut}=3$, $\mathrm{m}_{su}=2$ and $\mathrm{m}_{ts}=\infty$. Therefore, $\mathrm{PGL}_2(\mathbb{Z})$ is not a right-angled Coxeter group but it can be viewed as a Coxeter group which satisfies the nested condition relative to $s$ by Proposition \ref{prop;W-satisfies-NestC-equi-to-WRA}. 
    
    Considering the Tits representation of $\mathrm{PGL}_2(\mathbb{Z})$, the three generators correspond to the reflections on the Poincar\'e upper half-plane $\mathbb{H}$ given by $\rho(u): z\mapsto \frac{1}{\bar{z}}$, $\rho(t): z\mapsto 1- \bar{z}$ and $\rho(s): z\mapsto -\bar{z}$.  
They fix three walls $H_u = \{z\in \mathbb{H}: |z| = 1\}$, $H_t = \{z\in \mathbb{H}: {\rm Re}(z) = 1/2\}$ and $H_s = \{z\in \mathbb{H}: {\rm Re}(z) = 0\}$ respectively. Consider the domain 
$$K:= \{z\in \mathbb{H}: 0\le {\rm Re}(z)\le \frac{1}{2}\; \text{and}\; |z|\ge 1\},$$
which is a simplex of dimension 2 (a geodesic triangle with one vertex at $\infty$) and with mirror structure $\{K_u, K_t, K_s\}$, where $K_u, K_t$ and $K_s$ are the three boundaries of $K$ lying in $H_u$, $H_t$ and $H_s$ respectively. Then the Coxeter complex of $\mathrm{PGL}_2(\mathbb{Z})$ is the basic construction $\mathcal{U}(\mathrm{PGL}_2(\mathbb{Z}), K)$, which is the tessellation of the upper half-plane $\mathbb{H}$ by geodesic triangles. 
We show a piece of the Coxeter complex in  Figure \ref{figure: GL2}, where we denote the orbit of $K_s$ under the action of $\mathrm{PGL}_2(\mathbb{Z})$ by orange curves, that of $K_u$ by black curves and that of $K_t$ by teal curves. It is easy to see that the walls $g\cdot H_s$ for any $g\in \mathrm{PGL}_2(\mathbb{Z})$ only intersect at infinity, which confirms the fact that $\mathrm{PGL}_2(\mathbb{Z})$ satisfies the nested condition relative to $s$.
	
The wall $H_s$ splits the Coxeter complex into two parts. Since the fundamental chamber $K$ is in the root $\{z\in \mathbb{H}: {\rm Re}(z)\geq 0\}$, it is the positive root $H_s^+$. For the multiplier $m$ defined in \eqref{eq: def of symbol on W}, we have  $m(g) = 1$ if  the chamber $gK$ is in the root on the right-hand side of $H_s$, or equivalently, if $g$ does not admit a reduced expression starting with $s$; $m(g) = -1$ if the chamber $gK$ is in the root on the left-hand side of $H_s$, or equivalently, if $g$ admits a reduced expression starting with $s$.

    \begin{center}
	\begin{tikzpicture}
	   \draw [orange] (-1/6,3) coordinate (l3) node {$H_s$};
		\draw [teal] (5/6,3) coordinate (l2) node {$H_t$};
		\draw [black] (1/2,21/10) coordinate (l1) node {$H_u$};
		\draw [black] (1/2,5/2) coordinate (K) node {$K$};
		\draw [black] (-1/2,5/2) coordinate (K) node {$sK$};
		\draw [black] (-3/2,5/2) coordinate (K) node {$stK$};
		\draw [black] (3/2,5/2) coordinate (K) node {$tK$};
		\draw [black] (1/3,{sqrt(3)}) coordinate (K) node {$uK$};
		\draw [black] (-1/3,{sqrt(3)}) coordinate (K) node {$suK$};
		\draw [black] (-5/3,{sqrt(3)}) coordinate (K) node {$stuK$};
		\draw [black] (2/3,6/5) coordinate (K) node {$utK$};
		\draw [black] (4/3,6/5) coordinate (K) node {$utuK$};
		\draw [black] (5/3,{sqrt(3)}) coordinate (K) node {$tuK$};
		\draw [black] (0,0) coordinate  ($0$) node [below] {$0$};
		\draw [black] (1,0) coordinate  ($1/2$) node [below] {$1/2$};
		\draw [black] (-1,0) coordinate  ($-1/2$) node [below] {$-1/2$};
		\draw [black] (2,0) coordinate  ($1$) node [below] {$1$};
		\draw [black] (-2,0) coordinate  ($-1$) node [below] {$-1$};

		\draw (-4,0)--(4,0) {};
		\draw [orange] (0,0)--(0,4);
		\draw [teal] (1,0)--(1,{sqrt(3)/3});
		\draw [black] (1,{sqrt(3)/3})--(1,{sqrt(3)});
		\draw [teal] (1,{sqrt(3)})--(1,4);
		\draw [orange] (2,0)--(2,4);
		\draw [teal] (-1,0)--(-1,{sqrt(3)/3});
		\draw [black] (-1,{sqrt(3)/3})--(-1,{sqrt(3)});
		\draw [teal] (-1,{sqrt(3)})--(-1,4);
		
		\draw [teal] (0,0) arc (180:120:2);
		\draw [teal] (0,0) arc (0:60:2);
		\draw [black] (1,{sqrt(3)}) arc (120:60:2);
		\draw [black] (-1,{sqrt(3)}) arc (60:120:2);

		\draw [orange] (-2,0)--(-2,4);
		\draw [orange] (2,0) arc (0:180:1);
		\draw [orange] (0,0) arc (0:180:1);
		\draw [orange] (1,0) arc (0:180:1/2);
		\draw [orange] (2,0) arc (0:180:1/2);
		\draw [orange] (0,0) arc (0:180:1/2);
		\draw [orange] (-1,0) arc (0:180:1/2);

		\draw [teal] (2,0) arc (0:60:2);
		\draw [black] (1,{sqrt(3)}) arc (60:120:2);
		\draw [teal] (-2,0) arc (180:120:2);
		\draw [teal] (1,{sqrt(3)/3}) arc (60:180:2/3);
		\draw [teal] (-1,{sqrt(3)/3}) arc (60:180:2/3);
		\draw [black] (1,{sqrt(3)/3}) arc (60:30:2/3);
		\draw [black] (-1,{sqrt(3)/3}) arc (60:30:2/3);
		\draw [teal] (1,{sqrt(3)/3}) arc (120:0:2/3);
		\draw [teal] (-1,{sqrt(3)/3}) arc (120:0:2/3);
		\draw [black] (1,{sqrt(3)/3}) arc (120:150:2/3);
		\draw [black] (-1,{sqrt(3)/3}) arc (120:150:2/3);	 
	\end{tikzpicture} 
   
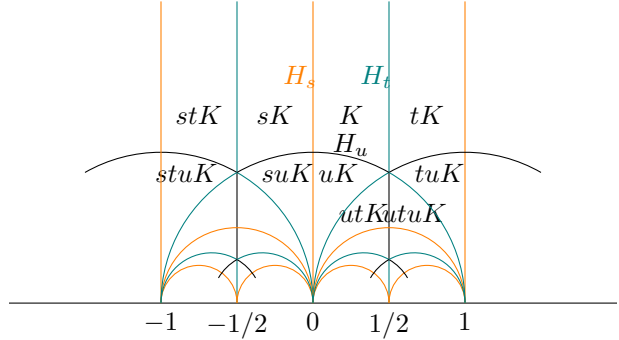
\captionof{figure}{Part of the Coxeter complex of $\mathrm{PGL}_2(\mathbb{Z})$.} 
   \label{figure: GL2}
	\end{center}

Some readers  may be more familiar with the  subgroup $\mathrm{PSL}_2(\mathbb{Z})$ of  $\mathrm{PGL}_2(\mathbb{Z})$, which is of index 2 and admits a presentation $$\mathrm{PSL}_2(\mathbb{Z}) = \langle  s_1, s_2: s_1^2 = s_2^3 = 1 \rangle,$$
    where 
    $$s_1 = \left(\begin{array}{cc}
		0 & -1\\
		1 & 0\\\end{array} \right)\quad \text{and}\quad 
		s_2 = \left(\begin{array}{cc}
		0 & -1\\
		1 & 1\\\end{array} \right).$$
        Note that by restricting the multiplier $m$ on $\mathrm{PGL}_2(\mathbb{Z})$ defined above to $\mathrm{PSL}_2(\mathbb{Z})$, we get a multiplier on $\mathrm{PSL}_2(\mathbb{Z})$ which coincides with the one studied in \cite[Section 5]{gonzalez2022non-commutative}.

\end{ex}

Next, we show that all affine type Coxeter groups  do not satisfy \eqref{eq: Cotlar} relative to any proper subgroup. We will illustrate this by showing a special case which is the Coxeter group $\widetilde{A}_2$, but a similar argument can be applied to other affine type Coxeter groups as well. 
\begin{ex}\label{ex; A-2-tilda-not-satisfying}
	The affine Coxeter group $\widetilde{A}_2$ is defined as 
	$$\widetilde{A}_2 := \langle s, t, u : s^2 = t^2 = u^2 = (st)^3 = (su)^3 = (tu)^3 = 1\rangle$$
	which is also known as the triangle group $D(3,3,3)$. 
    Consider $K$ to be an equilateral triangle in $\mathbb{R}^2$ with three edges $\{K_s, K_t, K_u\}$ lying in the three walls, denoted by $H_s, H_t$ and $H_u$ respectively. 
    Then the Coxeter complex is the basic construction $\mathcal{U}(\widetilde{A}_2, K)$ which is the tessellation of $\mathbb{R}^2$ by equilateral triangles and we show a piece of the Coxeter complex in Figure \ref{figure: A_2}. We denote the orbit of $K_s $ under the action of $\widetilde{A}_2$ by orange segments, that of $K_u$ by black segments and that of $K_t$ by teal segments. 
    
	\begin{center}
		
	\begin{tikzpicture}c
	\draw [orange] (0,0)--(1,0);
	\draw [teal] (1/2,{sqrt(3)/2})--(1,0);
	\draw [black] (0,0)--(1/2,{sqrt(3)/2});
	\draw [teal] (-1/2,{sqrt(3)/2})--(1/2,{sqrt(3)/2});
	\draw [orange] (0,0)--(-1/2,{sqrt(3)/2});
	\draw [black] (1/2,{sqrt(3)/2})--(3/2,{sqrt(3)/2});
	\draw [orange] (1,0)--(3/2,{sqrt(3)/2});
	\draw [orange] (1,{sqrt(3)})--(3/2,{sqrt(3)/2});
	\draw [orange] (1,{sqrt(3)})--(0,{sqrt(3)});
	\draw [black] (0,{sqrt(3)})--(1/2,{sqrt(3)/2});
	\draw [orange] (0,{sqrt(3)})--(-1/2,{sqrt(3)/2});
	\draw [black] (0,{sqrt(3)})--(-1,{sqrt(3)});
	\draw [black] (-3/2,{sqrt(3)/2})--(-1,{sqrt(3)});
	\draw [black] (2,0)--(3/2,{sqrt(3)/2});
	\draw [black] (3/2,{-sqrt(3)/2})--(2,0);
	\draw [black] (0,0)--(1/2,{-sqrt(3)/2});
	\draw [teal] (1/2,{-sqrt(3)/2})--(1,0);
	\draw [orange] (3/2,{-sqrt(3)/2})--(1,0);
	\draw [black] (3/2,{-sqrt(3)/2})--(1/2,{-sqrt(3)/2});
	\draw [black] (3/2,{-sqrt(3)/2})--(1,{-sqrt(3)});
	\draw [teal] (-1/2,{-sqrt(3)/2})--(1/2,{-sqrt(3)/2});
	\draw [orange] (0,0)--(-1/2,{-sqrt(3)/2});
	\draw [black] (0,0)--(-1,0);
	\draw [teal] (-1/2,{sqrt(3)/2})--(-1,0);
	\draw [teal] (-1/2,{-sqrt(3)/2})--(-1,0);
	\draw [black] (-3/2,{sqrt(3)/2})--(-1,0);
	\draw [orange] (-3/2,{sqrt(3)/2})--(-1/2,{sqrt(3)/2});
	\draw [orange] (-3/2,{sqrt(3)/2})--(-2,0);
	\draw [orange] (-3/2,{-sqrt(3)/2})--(-2,0);
	\draw [orange] (0,{-sqrt(3)})--(-1/2,{-sqrt(3)/2});
	\draw [black] (0,{-sqrt(3)})--(1/2,{-sqrt(3)/2});
	\draw [orange] (0,{-sqrt(3)})--(1,{-sqrt(3)});
	\draw [black] (0,{-sqrt(3)})--(-1,{-sqrt(3)});
	\draw [black] (-3/2,{-sqrt(3)/2})--(-1,0);
	\draw [orange] (-3/2,{-sqrt(3)/2})--(-1/2,{-sqrt(3)/2});
	\draw [black] (-3/2,{-sqrt(3)/2})--(-1,{-sqrt(3)});

	\draw [teal] (-1,{sqrt(3)})--(-1/2,{sqrt(3)/2});
	\draw [black] (-3/2,{sqrt(27)/2})--(-1,{sqrt(3)});
	\draw [teal]
	(1,{-sqrt(3)})--(1/2,{-sqrt(3)/2});
	\draw [orange] (3/2,{-sqrt(27)/2})--(1,{-sqrt(3)});
	\draw [teal] (2,0)--(1,0);
	\draw [black] (2,0)--(3,0);
	\draw [teal] (-2,0)--(-1,0);
	\draw [orange] (-2,0)--(-3,0);
	\draw [teal] (1/2,{sqrt(3)/2})--(1, {sqrt(3)});
	\draw [orange] (3/2,{sqrt(27)/2})--(1, {sqrt(3)});
	\draw [teal] (-1/2,{-sqrt(3)/2})--(-1, {-sqrt(3)});
	\draw [black] (-3/2,{-sqrt(27)/2})--(-1, {-sqrt(3)});

	\draw [black] (1/2,1/3) coordinate (K) node {$K$};
	\draw [black] (0,1/2) coordinate (uK) node {$uK$};
	\draw [black] (1,1/2) coordinate (tK) node {$tK$};
	\draw [black] (1/2,-1/3) coordinate (tK) node {$sK$};
	\draw [black] (0,-2/3) coordinate (usK) node {$suK$};
	\draw [black] (-1/2,1/5) coordinate (tK) node {$usK$};
	\draw [black] (-1/2,-1/5) coordinate (tK) node {$usuK$};
	\draw [black] (-1,2/3) coordinate (tK) node {$ustK$};
	\draw [black] (0,-21/20) coordinate (tK) node {$sutK$};
	\draw [black] (0,21/20) coordinate (tK) node {$utK$};
	\draw [black] (1,21/20) coordinate (tK) node {$tuK$};
	\draw [black] (1/2,3/2) coordinate (tK) node {$tutK$};
	\draw [black] (3/2,1/5) coordinate (K) node {$tsK$};
	\draw [black] (3/2,-1/5) coordinate (tK) node {$tstK$};
	\draw [black] (1,-2/3) coordinate (utK) node {$stK$};
 \draw [orange] (3,1/6) coordinate (l1) node {$H_s$};
 \draw [black] (3/2,{sqrt(675)/12}) coordinate (l1) node {$H_u$};

	\end{tikzpicture} 
    
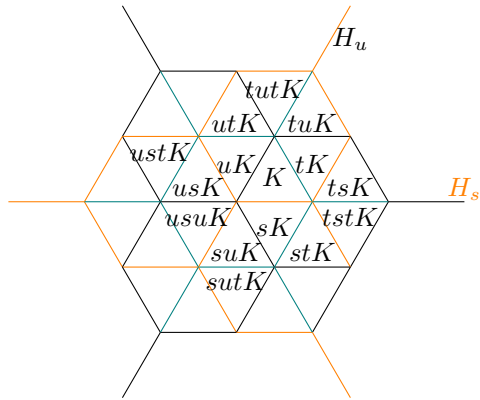
\captionof{figure}{Part of the Coxeter complex of $\widetilde{A}_2$.}
    \label{figure: A_2}
	 \end{center}
Note that in this case, the Davis complex of $\widetilde{A}_2$ is the same as its Coxeter complex. Applying Proposition \ref{prop;W-satisfies-NestC-equi-to-WRA}, we know 
that $\widetilde{A}_2$ does not satisfy the nested condition relative to any generator. This fact can also be observed directly from the picture of the Coxeter complex of $\widetilde{A}_2$. 
Without loss of generality, we only consider the generator $s$.
It is immediate to see from Figure \ref{figure: A_2} that $\{u,t\}$ does not satisfy the nested condition relative to $s$ since both $u\cdot H_s=H_{usu}$ (the line where the orange edge of $uK$ lies in) and $t\cdot H_s = H_{tst}$ (the line where the orange edge of $tK$ lies in) intersect with $H_s$. 
Now we show that $m$ defined in \eqref{eq: def of symbol on W} for the generator $s$ does not satisfy \eqref{eq: Cotlar} relative to any proper subgroup. By Proposition \ref{prop;equi-char-Cotlar-nested}, it suffices to show that there is no proper subgroup containing $(\widetilde{A}_2 \setminus \Gamma_s) \cup {\rm Stab}_{ H_s^+}$, where $\Gamma_s$ is the maximal set in $\widetilde{A}_2$ which satisfies the nested condition \eqref{nested-condition} relative to $s$. 
It is easy to see that $sut, tu, u$ and $t$ are in $\widetilde{A}_2 \setminus \Gamma_s$. Therefore, the subgroup generated by elements in $(\widetilde{A}_2 \setminus \Gamma_s) \cup {\rm Stab}_{ H_s^+}$ is $\widetilde{A}_2$.

Although $\widetilde{A}_2$ does not satisfy the nested condition, it admits a torsion-free subgroup of finite index which does. 
 It is known that $\widetilde{A}_2$ can be viewed as the semi-direct product $\widetilde{A}_2 = \mathbb{Z}^2\rtimes A_2$ (see for instance \cite{jushenko2018decompositions}). The generators of $\mathbb{Z}^2$ can be chosen as $\alpha = usts$ and $\beta = sust$. 
 The subgroup $\mathbb{Z}^2$ is of finite index and torsion-free, and it is easy to see that 
    $s\mathbb{Z}^2s = \mathbb{Z}^2$ and $\mathbb{Z}^2\cap {\rm Stab}_{H_s^+}=\langle \alpha \beta \rangle$. In this case, the multiplier $m_{\mathbb{Z}^2}^s$ on $\mathbb{Z}^2$ defined in \eqref{eq: multiplier on W0} (with $W_0=\mathbb{Z}^2$) coincides with the classical $v$-directional Hilbert transform $\mathbb{H}_v$ on $\mathbb{Z}^2$ with $v=(1,1)$. Then by extending this Hilbert transform to $\widetilde{A}_2$,  through \eqref{eq: def of general symbol} and applying 
    \ref{theoremB}, we obtain that $\mathbb{H}_v\rtimes \rm{id}_{A_2}$ is bounded on $L_p(\mathcal{L}\widetilde{A}_2)$ for any $1<p<\infty$. 
\end{ex}

As mentioned earlier, the group $\widetilde{A}_2$ has Property (F$\mathbb{R}$), so does any Coxeter group such that ${\rm m}_{st}\neq \infty$ for all $s,t\in S$. 
So they cannot act on real trees without global fixed points. Then the multiplier defined in \eqref{eq: def of general symbol} is beyond the scope of \cite{gonzalez2022non-commutative}. Note that in the case of $\widetilde{A}_2$, 
 the boundedness of $\mathbb{H}_v\rtimes \rm{id}_{A_2}$ can also be deduced from \cite[Theorem A]{ParKei16} where the authors studied the twisted Hilbert transforms on semi-direct products of $\mathbb{R}^n$ (or $\mathbb{Z}^n$) and a discrete group.
However, one can easily construct non-affine Coxeter groups which admit semi-direct products decompositions (see \cite{Gal05, BonDyer10} for the details), and the multiplier defined in \eqref{eq: def of general symbol} on these groups comes from the semi-direct product extension of a multiplier on a parabolic subgroup
as in the case of $\widetilde{A}_2$. However the parabolic subgroup is no longer isomorphic to $\mathbb{Z}^n$. Therefore, boundedness of the Hilbert transforms on these Coxeter groups defined by  \eqref{eq: def of general symbol}  can not be dealt with using \cite[Theorem A]{ParKei16}. 
We leave this to the interested readers to check.

\section{Hilbert transforms on groups acting on buildings}\label{section;building}
Buildings were introduced by Tits in 1950s aiming at understanding complex semisimple Lie groups via the geometric viewpoint. There are several equivalent definitions for buildings and one well-known approach is to view them as simplicial complexes with a family of subcomplexes satisfying some axioms. In the modern building theory, there is also an important combinatorial approach which is purely logical and is to view buildings as $W$-metric spaces where $W$ is a Coxeter group. In this paper we will use the second definition. 

On the one hand, some buildings
 can be viewed as higher dimensional trees. Inspired by the results on groups acting on $\mathbb{R}$-trees in \cite{gonzalez2022non-commutative}, we will 
consider Hilbert transforms on groups acting on buildings. A motivation for us to consider groups acting on buildings is the fact that ${\rm GL}(n,\mathbb{Z})$ (or ${\rm SL}(n,\mathbb{Z})$) ($n\geq 3$) are canonical examples of groups which have Kazhdan’s Property (T) therefore have Property (F$\mathbb{R}$) (any actions on $\mathbb{R}$ is trivial) as well, but these groups admit non-trivial actions on affine buildings of type $\widetilde{A}_n$ (see \cite[Section 6.9]{abramenko2008buildings} or \cite[Section 9.2]{ronan2009lectures}). The main tool \eqref{eq: Cotlar} that we use to prove the $L_p$-boundedness ($1<p<\infty$)  would require the buildings where the groups act to satisfy a certain nested condition (similarly to the Coxeter group case in the previous section). However, the affine  buildings of type $\widetilde{A}_n$ does not satisfy this condition. Along this line of research, it is worth mentioning a recent work by Parcet, de la Salle and Tablate  \cite{PardelaSalTablate24} where they have shown that there will be no $L_p$-bounded  idempotent Fourier multipliers  on ${\rm SL}(n,\mathbb{R})$ for any $n\geq 2$. 
However, it is an independent question to ask whether there will be Hilbert transforms on ${\rm SL}(n,\mathbb{Z})$ as a lattice of ${\rm SL}(n,\mathbb{R})$. The answer is positive for the case $n=2$ (see \cite{gonzalez2022non-commutative}), but it is still open whether there are $L_p$-bounded Hilbert transforms (subject to some algebraic relations) on ${\rm SL}(n,\mathbb{Z})$ for $n\ge 3$.

On the other hand, buildings are also closely related to Coxeter groups, and we will see later that every Coxeter complex can be viewed as a (thin) building. Therefore,  the results in this section are also generalizations of that in the previous subsection on Coxeter groups satisfying \eqref{nested-condition} relative to a certain generator.

\subsection{Preliminaries on buildings}\label{subsec: buildings}
We now give the definition and some key properties of buildings; the definitions mentioned in this subsection can be found in \cite[Chapter 5]{abramenko2008buildings}. 

\begin{defn}\label{def: Buildings}
	A \textit{building of type $(W,S)$}, where $(W,S)$ is a Coxeter system, is a pair $(\Delta, \delta)$ consisting of a non-empty set $\Delta$, whose elements are called \textit{chambers}, and a map $\delta: \Delta\times \Delta\to W$, which is called the \textit{Weyl distance function} satisfying the following three conditions: for $C, D\in \Delta$,
\begin{enumerate}
    \item[(B1)] $\delta(C, D) = 1$ if and only if $C = D$.
    \item[(B2)] If $\delta(C,D) = w$ and $E\in\Delta$ with $\delta(E,C) = s\in S$, then $\delta(E,D) = sw$ or $w$. In particular,  if $l(sw)> l(w)$, then $\delta(E,D) = sw$.
    \item[(B3)]If $\delta(C,D) = w$, then for every $s\in S$ there exists a chamber $E\in \Delta$ such that $\delta(E,C) = s$ and $\delta(E,D) = sw$.
\end{enumerate}	
	
\end{defn}

The above conditions are vague analogues of metric spaces, so sometimes we call buildings $W$-metric spaces. By conditions (B1) and (B3) above, it is easy to see that for $s\in S$,
$$\delta(C,D) =s \Longleftrightarrow \delta(D,C) = s, \;\; \text{ for } C, D\in \Delta.$$

\begin{defn}
	Let $(\Delta, \delta)$ be a building of type $(W,S)$ and $C,D\in \Delta$. If $\delta(C,D) = s\in S$, then we say that $C$ and $D$ are $s$-\textit{adjacent}. If $\delta(C,D)\in \{1,s\}$ for $s\in S$, then we say that $C$ and $D$ are $s$-\textit{equivalent}. 
\end{defn}

In fact, every Coxeter complex can be viewed as a building.
\begin{ex}\label{ex: Coxeter as a building}
	Let $(W,S)$ be a Coxeter system. Define a map $\delta_W: W\times W \to W$ by $\delta_W(g,h) := g^{-1}h$. Then it is easy to check conditions (B1)-(B3) and every $s$-equivalence class contains exactly two elements in $W$. This implies that $(W,\delta_W)$ is a building of type $(W,S)$. We call this type of  buildings the \textit{standard thin buildings}. 
    Moreover, for any three chambers $C$, $D$ and $E$, we always have $\delta_W(C,D)=\delta_W(C,E)\delta_W(E,D)$. Note that this property fails to be true in general buildings.
\end{ex}

Note that the function defined in the following way,  
$$d(C,D) := l(\delta(C,D))$$
where $C,D\in \Delta$, gives a metric on $(\Delta, \delta)$ between chambers (see for instance \cite[Corollary 5.17]{abramenko2008buildings}).  We also present a useful property for the Weyl distance function in the following, which will be used frequently in the subsequent computations.
\begin{prop}\cite[Corollary 5.17]{abramenko2008buildings}\label{prop;delta-equal-inverse}
	Let $C$ and $D$ be two chambers in $(\Delta, \delta)$. Then $\delta(C,D) = \delta(D,C)^{-1}$.
\end{prop}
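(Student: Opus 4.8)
The plan is to recast the statement in terms of galleries and to use the structural fact that reversing a minimal gallery again yields a minimal gallery. Here a \emph{gallery} from $C$ to $D$ is a finite sequence of chambers $C = C_0, C_1, \dots, C_n = D$ with $\delta(C_{i-1}, C_i) \in S$ for each $i$, and it is \emph{minimal} if its length $n$ equals $d(C,D) = l(\delta(C,D))$. I will use repeatedly the symmetry $\delta(C,D) = s \iff \delta(D,C) = s$ for $s \in S$, recorded in the text just after Definition~\ref{def: Buildings}.

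First I would establish two elementary consequences of the axioms. (1) \emph{For any gallery $C_0, \dots, C_m$ one has $l(\delta(C_0, C_m)) \le m$}: by induction on $m$, writing $s = \delta(C_0, C_1) \in S$ and applying (B2) to get $\delta(C_0, C_m) \in \{\, s\, \delta(C_1, C_m),\ \delta(C_1, C_m)\,\}$, so that $l(\delta(C_0,C_m)) \le l(\delta(C_1,C_m)) + 1 \le m$ by the inductive hypothesis for the sub-gallery $C_1, \dots, C_m$. (2) \emph{Minimal galleries exist between any two chambers}: by induction on $n = l(\delta(C,D))$, choosing $s$ to be the first letter of a reduced expression of $w := \delta(C,D)$ (so $l(sw) = n-1$), using (B3) to obtain a chamber $E$ with $\delta(E,C) = s$ and $\delta(E,D) = sw$, and prepending $C$ to a minimal gallery from $E$ to $D$ (which exists by induction since $l(\delta(E,D)) = n-1$), noting $\delta(C,E) = s$. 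From (1) and (2) one obtains the triangle inequality for $d$ (concatenate minimal galleries, then apply (1)) and hence the symmetry $l(\delta(D,C)) = l(\delta(C,D))$: reversing a minimal gallery $C \to D$ gives a gallery $D \to C$ of length $l(\delta(C,D))$, so $l(\delta(D,C)) \le l(\delta(C,D))$ by (1), and the opposite inequality follows by swapping $C$ and $D$. In particular, the reverse of a minimal gallery is minimal.

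Next I would show that $\delta$ is multiplicative along a minimal gallery: if $C = C_0, \dots, C_n = D$ is minimal and $s_i := \delta(C_{i-1}, C_i)$, then $\delta(C_0, C_n) = s_1 s_2 \cdots s_n$. Put $v_k := \delta(C_k, C_n)$. The triangle inequality with (1) forces $d(C_k, C_n) = n - k$ for every $k$ (i.e.\ each sub-gallery of a minimal gallery is minimal), so $l(v_{k-1}) = n-k+1 > n-k = l(v_k)$. Applying (B2) to pass from $v_k$ to $v_{k-1}$ gives $v_{k-1} \in \{\, s_k v_k,\ v_k\,\}$, and this length inequality forces $v_{k-1} = s_k v_k$. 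Iterating down from $v_n = \delta(C_n, C_n) = 1$ yields $v_0 = s_1 s_2 \cdots s_n$.

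Finally, I would apply this multiplicativity both to a minimal gallery $\gamma\colon C = C_0, \dots, C_n = D$ and to its reverse $\gamma^{\mathrm{op}}\colon D = C_n, \dots, C_0 = C$ (which is minimal by the second paragraph, and whose consecutive Weyl distances are $\delta(C_{n-i+1}, C_{n-i}) = \delta(C_{n-i}, C_{n-i+1}) = s_{n-i+1}$ by the symmetry of single-generator distances). This yields $\delta(C,D) = s_1 s_2 \cdots s_n$ and $\delta(D,C) = s_n s_{n-1} \cdots s_1$, and since each $s_i$ is an involution in $W$ the latter equals $(s_1 s_2 \cdots s_n)^{-1}$, i.e.\ $\delta(D,C) = \delta(C,D)^{-1}$. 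The step I expect to require the most care is the symmetry of $d$ in the second paragraph: since the Weyl distance is $W$-valued and not symmetric by fiat, the fact that reversing a minimal gallery again produces a minimal gallery is not automatic but must be bootstrapped from (1) and (2); everything else is routine induction and length bookkeeping with the axioms (B1), (B2), (B3).
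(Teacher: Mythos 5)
Your argument is correct and complete: establishing that galleries bound the length of $\delta$, that minimal galleries exist and reverse to minimal galleries, that $\delta$ is multiplicative along a minimal gallery via (B2) and the length bookkeeping, and then reading off $\delta(D,C)=s_n\cdots s_1=(s_1\cdots s_n)^{-1}$ is exactly the standard route. The paper itself offers no proof of this proposition --- it is quoted directly from \cite[Corollary 5.17]{abramenko2008buildings} --- and your write-up is essentially the argument given in that reference, so there is nothing further to reconcile.
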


Next, we will introduce another class of important objects in buildings, which are called apartments. 

\begin{defn}
	Let $(\Delta, \delta)$ and $(\Delta', \delta')$ be two buildings of type $(W,S)$. 
 An \textit{isometry} from $\Delta'$ to $\Delta$ is a map
    $\phi: \Delta'\to \Delta$  such that 
	$$\delta(\phi(C),\phi(D)) = \delta'(C,D),$$
	for any $C, D\in \Delta'$. 
    When $(\Delta', \delta')= (W,\delta_W)$, a subset $\Sigma \subseteq \Delta$ is called an \textit{apartment} if it is an isometric image $\phi(W)$ of $W$ in $\Delta$.  
\end{defn} 

\begin{rem}\label{rem: existence of apartment}
 By \cite[Theorem 5.73]{abramenko2008buildings}, any isometry from a subset of $W$ to $\Delta$ extends to an  isometry from $W$ to $\Delta$. Therefore, any subset of $\Delta$ that is isometric to a subset of $W$ is contained in an apartment. Consequently, for any two chambers $C$, $D\in \Delta$, since $\{C,D\}$ is isometric to $\{1, w\}$ where $w=\delta(C,D)$, there exists an apartment of $\Delta$ which contains both $C$ and $D$.   
\end{rem}

\begin{rem}\label{rem: unique s-adjacency}
Let $\Sigma $ be an apartment in $\Delta$, i.e. $\Sigma=\phi(W)$ for an isometry $\phi$ from $W$ to $\Delta$.  By the definition of isometries, it is easy to see that $\phi$ preserves adjacency. Then it follows from 
Example \ref{ex: Coxeter as a building} that for a chamber $C\in \Sigma$ and a generator $s\in S$, there is a unique chamber in $\Sigma$ which is  $s$-adjacent to $C$. We denote it by $s_\Sigma C$.
\end{rem}

The following proposition is contained in \cite[Section 5.5.2]{abramenko2008buildings} implicitly, however the definition for apartments used there is different  from (but equivalent to) ours. 
For the sake of completeness, we will give a self-contained proof. 
\begin{prop}\label{prop; surj-isometry-apartment-Complex}
	Let $(\Delta, \delta)$ be a building of type $(W,S)$ and $\Sigma $ be an apartment in $\Delta$, i.e. $\Sigma=\phi(W)$ for an isometry $\phi: W \rightarrow \Delta$.
     Fix $C'\in \Sigma$. Define $\psi_{C'}: \Sigma\to W$ by $\psi_{C'}(C) := \delta(C', C)$. Then $\psi_{C'}$ is  a surjective isometry, i.e. $$\psi_{C'}(\Sigma) = W\quad\text{and}\quad\delta_W(\psi_{C'}(C), \psi_{C'}(D)) = \delta(C,D).$$
	Moreover, for any $C,D,E\in \Sigma$, one has $\delta(C,E)\delta(E,D) = \delta(C,D)$. In particular, when $C' = \phi(1)$, we have $\psi_{C'} = \phi^{-1}$. 
\end{prop}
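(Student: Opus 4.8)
The plan is to transport everything back to the standard thin building $(W,\delta_W)$ via the given isometry $\phi$, where the desired identities are elementary (Example \ref{ex: Coxeter as a building}), and then translate the conclusions back to $\Sigma$. First I would record the key reduction: since $\phi\colon W\to\Delta$ is an isometry, we may write $C' = \phi(w_0)$ for a unique $w_0\in W$, and every chamber of $\Sigma$ is $\phi(w)$ for a unique $w\in W$. Then for $C=\phi(w)$ we have, by the isometry property and Proposition \ref{prop;delta-equal-inverse},
\[
\psi_{C'}(C) = \delta(C',C) = \delta(\phi(w_0),\phi(w)) = \delta_W(w_0,w) = w_0^{-1}w .
\]
In particular $\psi_{C'}\circ\phi$ is the left-translation map $w\mapsto w_0^{-1}w$ on $W$, which is a bijection of $W$ onto itself; hence $\psi_{C'}$ maps $\Sigma=\phi(W)$ bijectively onto $W$, giving the surjectivity claim $\psi_{C'}(\Sigma)=W$.

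Next I would verify that $\psi_{C'}$ is an isometry from $(\Sigma,\delta)$ to $(W,\delta_W)$. Take $C=\phi(v)$, $D=\phi(w)$ in $\Sigma$. On the one hand $\delta(C,D)=\delta_W(v,w)=v^{-1}w$; on the other hand, using the formula just derived,
\[
\delta_W\bigl(\psi_{C'}(C),\psi_{C'}(D)\bigr) = \delta_W\bigl(w_0^{-1}v,\; w_0^{-1}w\bigr) = (w_0^{-1}v)^{-1}(w_0^{-1}w) = v^{-1}w,
\]
so the two agree. Composing: for $C,D,E\in\Sigma$ with $C=\phi(u)$, $E=\phi(v)$, $D=\phi(w)$, the cocycle identity in $(W,\delta_W)$ noted in Example \ref{ex: Coxeter as a building} gives $\delta(C,E)\delta(E,D)=\delta_W(u,v)\delta_W(v,w)=(u^{-1}v)(v^{-1}w)=u^{-1}w=\delta_W(u,w)=\delta(C,D)$, which is the asserted multiplicativity on $\Sigma$. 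Finally, when $C'=\phi(1)$ we have $w_0=1$, so $\psi_{C'}(\phi(w))=w$ for all $w$, i.e. $\psi_{C'}=\phi^{-1}$.

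The only genuinely non-formal input is that $\phi$, being an isometry onto $\Sigma$ with $\psi_{C'}\circ\phi$ bijective, actually forces $\psi_{C'}|_\Sigma$ to be a bijection; this is immediate once one knows $\phi$ is injective, which follows from (B1) since $\delta(\phi(v),\phi(w))=\delta_W(v,w)=1$ implies $v=w$. Thus there is no real obstacle here — the proposition is essentially a bookkeeping exercise built on Example \ref{ex: Coxeter as a building}, Proposition \ref{prop;delta-equal-inverse}, and the definition of an isometry — and I expect the write-up to be short. The one point to state carefully is the passage from ``$\{C',C\}$ isometric to a pair in $W$'' to genuinely landing inside the fixed apartment $\Sigma$; but here $\Sigma$ is given at the outset and $C',C\in\Sigma$, so no appeal to Remark \ref{rem: existence of apartment} is needed.
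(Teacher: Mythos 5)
Your proof is correct and takes essentially the same route as the paper's: write $C'=\phi(w_0)$, compute $\psi_{C'}(\phi(w))=w_0^{-1}w$ using that $\phi$ is an isometry, and read off surjectivity, the isometry property, the cocycle identity, and the case $C'=\phi(1)$ from elementary computations in $(W,\delta_W)$. The only cosmetic difference is that you obtain $\delta(C,E)\delta(E,D)=\delta(C,D)$ by pulling all three chambers back to $W$ via $\phi$, whereas the paper deduces it from the isometry property of $\psi_{C'}$ together with $\delta(C',C)^{-1}=\delta(C,C')$ and then relabels $C'$ as $E$; both amount to the same one-line calculation.
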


\begin{proof}
Suppose that $C'=\phi(w')$ for some $w'\in W$. To show surjectivity, note that for any
$w\in W$, we have $\psi_{C'} (\phi (w'w))=\delta (\phi(w'), \phi (w'w))=w$ since $\phi$ is an isometry. 
 Let $C, D\in \Sigma$. Suppose that $C=\phi(w_1)$ and $D = \phi(w_2)$ where $w_1, w_2\in W$. Then by the definition, we have 
	$$\psi_{C'}(C) = \delta(C', C) = \delta(\phi(w'),\phi(w_1)) = w'^{-1}w_1.$$
	Similarly, we have $\psi_{C'}(D) = w'^{-1}w_2$. Therefore,
	$$\delta_W(\psi_{C'}(C),\psi_{C'}(D)) =(w'^{-1}w_1)^{-1}w'^{-1}w_2 = w_1^{-1}w_2 = \delta_W(w_1,w_2) = \delta(C,D),$$
    which means $\psi_{C'}$ is an isometry.
	On the other hand, by Proposition \ref{prop;delta-equal-inverse}, we have 
	$$\psi_{C'}(C)^{-1}\psi_{C'}(D) = \delta(C',C)^{-1}\delta(C',D) = \delta(C,C')\delta(C',D).$$
	Therefore, we have $\delta(C,C')\delta(C',D) = \delta(C,D)$. By replacing $C'$ with $E$, we get the second statement. Finally, if $C'=\phi(1)$ and for any $C = \phi(w)$, $\psi_{C'}(C) =\delta(\phi(1),\phi(w)) = w$. Hence,
	$$\phi(\psi_{C'}(C)) = C\; \text{and}\; \psi_{C'}(\phi(w)) = w,$$
    which implies $\psi _{C'}= \phi^{-1}$.

\end{proof}

\begin{rem}\label{rem: change phi}
The map $\psi$ in the above proposition is defined on the building $\Delta$ and takes values in $W$, it depends on the apartment $\Sigma$ which contains the chamber $C$ and another chosen chamber $C'$ in the apartment. This type of maps is known as \textit{retractions}. It is 
a surjective isometry from $\Sigma=\phi(W)$ to $W$ which maps $C'$ to $1$ for any given $C'\in \Sigma$. Therefore, by taking $\psi^{-1}$, we get another surjective isometry from $W$ to $\Sigma=\phi(W)$  mapping $1$ to any given chamber in $\Sigma$. This implies that when we fix an apartment, there is the freedom to swap the isometry from $W$ to itself to ensure the image of $1$ in $W$ is equal to any chosen chamber in $\Sigma$.  
\end{rem}

By the definition of apartments, we see that every apartment in a building of type $(W,S)$ is isometric to $W$. 
Consequently, apartments are also mutually isometric. We refer readers to  \cite[Corollary 5.68]{abramenko2008buildings} for the proof of the following corollary. 
\begin{cor}
 Let $\Sigma_1$ and $\Sigma_2$ be two apartments in $\Delta$. 
There exists a surjective isometry $\phi$ from $\Sigma_1$ to $\Sigma_2$ such that 
	$\phi(C) = C$,
	for any $C\in \Sigma_1\cap \Sigma_2$. More precisely, if there is a chamber $C\in \Sigma_1\cap \Sigma_2$, then $\phi$ can be defined as $\phi_2\circ \phi_1^{-1}$ where $\Sigma_1 = \phi_1(W)$, $\Sigma_2 = \phi_2(W)$ and $\phi_1(1) = \phi_2(1) = C$.
 Moreover, $\phi$ sends $s$-adjacent chambers to $s$-adjacent chambers for any $s\in S$. 
   
\end{cor}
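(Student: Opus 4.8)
The plan is to obtain $\phi$ as nothing more than a composition of the two parametrizing isometries, after normalizing them so that $1\in W$ is sent to a common chamber; everything then follows from Proposition~\ref{prop; surj-isometry-apartment-Complex} together with the re-basing freedom recorded in Remark~\ref{rem: change phi}.

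First I would dispose of the case $\Sigma_1\cap\Sigma_2=\emptyset$, where the requirement ``$\phi(C)=C$ for all $C\in\Sigma_1\cap\Sigma_2$'' is vacuous. Each $\phi_i:W\to\Sigma_i$ is surjective by the definition of an apartment and injective by (B1) (if $\phi_i(a)=\phi_i(b)$ then $\delta_W(a,b)=\delta(\phi_i(a),\phi_i(b))=1$, so $a=b$), hence $\phi_1^{-1}$ is a well-defined distance-preserving bijection from $(\Sigma_1,\delta)$ to $(W,\delta_W)$, and $\phi:=\phi_2\circ\phi_1^{-1}:\Sigma_1\to\Sigma_2$ is a surjective isometry of the required kind.

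For the main case, fix a chamber $C\in\Sigma_1\cap\Sigma_2$. By Proposition~\ref{prop; surj-isometry-apartment-Complex}, the retraction $\psi_C=\delta(C,\cdot)$ restricted to $\Sigma_i$ is a surjective isometry onto $W$ sending $C$ to $1$; its inverse is therefore a surjective isometry $W\to\Sigma_i$ sending $1$ to $C$, and I may replace $\phi_i$ by this inverse (this is exactly Remark~\ref{rem: change phi}). After this normalization $\phi_1(1)=\phi_2(1)=C$, and I put $\phi:=\phi_2\circ\phi_1^{-1}:\Sigma_1\to\Sigma_2$. As a composition of surjective isometries, $\phi$ is a surjective isometry for $\delta$, i.e. $\delta(\phi(D),\phi(D'))=\delta(D,D')$ for all $D,D'\in\Sigma_1$; in particular $\delta(D,D')=s$ forces $\delta(\phi(D),\phi(D'))=s$, which is the ``moreover'' clause, and $\phi(C)=\phi_2(\phi_1^{-1}(C))=\phi_2(1)=C$.

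The only point that needs an argument is that this single $\phi$ fixes \emph{every} chamber of $\Sigma_1\cap\Sigma_2$, not merely the base chamber $C$. Let $D\in\Sigma_1\cap\Sigma_2$ and write $D=\phi_1(w_1)=\phi_2(w_2)$ with $w_1,w_2\in W$. Since each $\phi_i$ is an isometry with $\phi_i(1)=C$, we get $w_1=\delta(\phi_1(1),\phi_1(w_1))=\delta(C,D)=\delta(\phi_2(1),\phi_2(w_2))=w_2$, whence $\phi(D)=\phi_2(w_1)=\phi_2(w_2)=D$. In other words, a chamber of an apartment is determined inside that apartment by its Weyl distance from the fixed base chamber, which is precisely the content of Proposition~\ref{prop; surj-isometry-apartment-Complex}. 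I do not anticipate a genuine obstacle here: the only thing to be careful about is that the re-based maps $\phi_i$ remain legitimate apartment parametrizations, and this is guaranteed by Remark~\ref{rem: change phi}.
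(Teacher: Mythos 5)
Your proof is correct and follows precisely the construction indicated in the statement itself (the paper gives no proof of its own, deferring to \cite[Corollary 5.68]{abramenko2008buildings}): normalize both parametrizations at a common chamber using the retraction of Proposition~\ref{prop; surj-isometry-apartment-Complex} as allowed by Remark~\ref{rem: change phi}, compose, and use the fact that a chamber of an apartment is determined by its Weyl distance to the base chamber. The one genuinely nontrivial point --- that the single map $\phi$ fixes \emph{every} chamber of $\Sigma_1\cap\Sigma_2$ and not merely the chosen base chamber $C$ --- is exactly the step you isolate and verify via $w_1=\delta(C,D)=w_2$, so nothing is missing.
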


    
Since there is a bijection between $W$ and  chambers in the Coxeter complex $\Sigma(W,S)$ which sends $w$ to $wK$ ($K$ is the fundamental chamber of $\Sigma(W,S)$), we can define an $W$-metric on $\Sigma(W,S)$ by 
$$\delta_{\Sigma(W,S)} (w_1K, w_2K)=\delta_{W} (w_1, w_2)=w_1^{-1}w_2.$$ Therefore, 
we get that apartments are all isometric to the Coxeter complex $\Sigma(W,S)$.  Then we can also denote the apartments by $\phi(\Sigma(W,S))$, where $\phi$ is an isometry.
Now we give the definitions for walls and roots in buildings.

\begin{defn}
	Let $(\Delta, \delta)$ be a building of type $(W,S)$. A subset of $\Delta$ is called a \textit{positive root} (resp. \textit{wall}) if it is isometric to a positive root (resp. wall) in $\Sigma(W,S)$. 
    For any chamber $C\in \Delta$, if $C$ is isomorphic to $wK$, its \textit{$s$-face} is defined as the isometric image of $K_s$ in the chamber $wK$. 
\end{defn}
Similar to the roots in  Coxeter complexes, roots in buildings also have  expressions with respect to the metric between chambers. 

\begin{lem}\label{lem;root in building}
	Let $(\Delta, \delta)$ be a building of type $(W,S)$.
	For any root $H\in \Delta$, there is a reflection $r = wsw^{-1}\in W$ with $w\in W$, $s\in S$ and an isometry $\phi$ such that $H$ is in the apartment $\Sigma = \phi(\Sigma(W,S))$ and $H = \phi(H_r^+)$ or $\phi(H_r^-)$. Suppose that $C = \phi(wK)$ and $s_{\Sigma}C\in \Sigma$ is the chamber $s$-adjacent to $C$.
    \begin{enumerate}
        \item [(i)]  If $H$ is the positive root in $\Sigma = \phi(\Sigma(W,S))$ divided by the wall $\phi(H_r)$, then $H = \phi(H_r^{+})$ and when
    $l(w)<l(ws)$,
	we have
	\begin{equation*}\label{equation;root in building}
		\phi(H_r^{+}):= H(C, s_{\Sigma}C) := \{D\in \Sigma: d(D,C)< d(D,s_{\Sigma}C)\};
	\end{equation*}
    when $l(ws)<l(w)$,
	we have
	\begin{equation*}
		\phi(H_r^{+}):= H(s_{\Sigma}C, C) := \{D\in \Sigma: d(D,C)> d(D,s_{\Sigma}C)\}.
	\end{equation*}

    \item[(ii)] If $ H$ is the negative root in $\Sigma = \phi(\Sigma(W,S))$ divided by the wall $\phi(H_r)$, then $H = \phi(H_r^{-})= \{D\in \Sigma: D\notin {\phi(H_r^{+})}\}$.
    \end{enumerate}

\end{lem}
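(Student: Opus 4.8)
The plan is to prove this as a transfer-of-structure statement: by definition a root of $\Delta$ is an isometric image of a root of $\Sigma(W,S)$, so the lemma simply asks us to push the metric description of the roots $H_r^{\pm}$ recorded in Remark \ref{rem: expression of half spaces} through an isometry $\phi$.

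First I would unwind the definitions. Given a root $H$ of $\Delta$, there is, by definition, an isometry $\phi$ from $\Sigma(W,S)$ to $\Delta$ (identifying $W$ with the chambers of $\Sigma(W,S)$ via $g\mapsto gK$) and a root $\alpha$ of $\Sigma(W,S)$ with $H=\phi(\alpha)$. Every wall of $\Sigma(W,S)$ is labelled by a reflection in $\Gamma=\{wsw^{-1}:w\in W,\ s\in S\}$, so I may write $\alpha=H_r^{+}$ or $\alpha=H_r^{-}$ with $r=wsw^{-1}$; the independence of the description in Remark \ref{rem: expression of half spaces} from the chosen representative $(w,s)$ of $r$ means no well-definedness issue arises. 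Put $\Sigma:=\phi(\Sigma(W,S))$, an apartment containing $H$; then $\phi(H_r)$ is a wall of $\Sigma$ dividing it into $\phi(H_r^{+})$ and $\phi(H_r^{-})$, one of which is $H$. The alternative $H=\phi(H_r^{+})$ is case (i) and $H=\phi(H_r^{-})$ is case (ii).

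Next I would record the two facts about $\phi$ that carry the argument. Since $\phi$ preserves the Weyl distance and $d=l\circ\delta$ on chambers, $\phi$ preserves the metric $d$, so with $C:=\phi(wK)$ one has $d(D,C)=d(gK,wK)$ whenever $D=\phi(gK)\in\Sigma$. Moreover $\phi$ sends $s$-adjacent chambers to $s$-adjacent chambers, and $wsK$ is the unique chamber of $\Sigma(W,S)$ that is $s$-adjacent to $wK$, so $\phi(wsK)$ is the unique chamber of $\Sigma$ that is $s$-adjacent to $C$, i.e.\ $\phi(wsK)=s_\Sigma C$ in the notation of Remark \ref{rem: unique s-adjacency}; consequently $d(D,s_\Sigma C)=d(gK,wsK)$ for $D=\phi(gK)$. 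Note also that $\phi$ is a bijection of $\Sigma(W,S)$ onto $\Sigma$ (injectivity is (B1)).

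Finally I would conclude. For (i), apply Remark \ref{rem: expression of half spaces} to $r=wsw^{-1}$: if $l(w)<l(ws)$ then $H_r^{+}=\{gK:d(gK,wK)<d(gK,wsK)\}$, hence for $D=\phi(gK)\in\Sigma$, using that $\phi$ is a bijection onto $\Sigma$ together with the two identities above, $D\in\phi(H_r^{+})\iff d(D,C)<d(D,s_\Sigma C)$, which is exactly $\phi(H_r^{+})=H(C,s_\Sigma C)$; the case $l(ws)<l(w)$ is identical with the inequality reversed. For (ii), Remark \ref{rem: expression of half spaces} also gives $H_r^{-}=\{gK\in\Sigma(W,S):gK\notin H_r^{+}\}$, and applying the bijection $\phi$ yields $\phi(H_r^{-})=\{D\in\Sigma:D\notin\phi(H_r^{+})\}$. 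There is no real obstacle here beyond bookkeeping; the only point needing care is the identification $\phi(wsK)=s_\Sigma C$, which rests on preservation of $s$-adjacency and the uniqueness of the $s$-adjacent chamber inside an apartment.
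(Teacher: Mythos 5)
Your proposal is correct and follows essentially the same route as the paper's proof: both transfer the metric description of $H_r^{\pm}$ from Remark \ref{rem: expression of half spaces} through the isometry $\phi$, and both reduce the key step to the identification $\phi(wsK)=s_{\Sigma}C$ via preservation of $s$-adjacency and the uniqueness of the $s$-adjacent chamber in an apartment, with (ii) following from $\Sigma(W,S)=H_r^+\cup H_r^-$. The only difference is that you also spell out the existence of $r$ and $\phi$ from the definition of a root in a building, which the paper leaves implicit.
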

\begin{proof}
	(i) 
   Recall that in the Coxeter complex, when  $l(w)<l(ws)$, by Remark \ref{rem: expression of half spaces} the positive root for the wall $H_r$ has the form $$H_r^{+} = \{gK\in \Sigma(W,S): d(gK,wK)<d(gK,wsK)\}.$$
   By the fact that $\phi$ is an isometry, we get 
  $$ \phi (H_r^{+}) =  \{D\in \Sigma: d(D,C)<d(D,\phi(wsK))\}.$$ Therefore, 
it is sufficient to show $\phi(wsK) = s_{\Sigma}C$.
	Again since $\phi$ is an isometry, we have 
	$$\delta(C,\phi(wsK))= \delta_W(wK,wsK)=s=\delta(C,s_{\Sigma}C).$$
	Since both $\phi(wsK)$ and $s_{\Sigma}C$ are in $\Sigma$ and $s$-adjacent to $C$, the uniqueness of $s$-adjacence in $\Sigma(W,S)$ implies that $\phi(wsK) = s_{\Sigma}C$. Similarly, one can show $\phi (H_r^{+})=\{D\in \Sigma: d(D,C)> d(D,s_{\Sigma}C)\}$ for the case $l(w)>l(ws)$.

    (ii) This follows directly from (i) and the fact that $\Sigma(W,S)=H_r^+ \cup H_r^{-}$.
\end{proof}


There are rich theories for groups acting on buildings.
 There are two typical group actions usually considered in the literature, the strongly transitive action and the Weyl transitive action  \cite[Chapter 6]{abramenko2008buildings}.
However in this paper, 
we only require the group action to be isometric.

\begin{defn}
	Let $(\Delta, \delta)$ be a building of type $(W,S)$ and $G$ be a locally compact group. We say that $G$ acts on $(\Delta, \delta)$ by \textit{isometries} if 
 for any $g\in G$ and any $C,D\in \Delta$,
	$$\delta(gC,gD) = \delta(C,D).$$
\end{defn}

We refer readers  to \cite[Chapter 6]{abramenko2008buildings} for more details on groups acting on buildings. 
\subsection{Buildings satisfying the nested condition}
 The nested condition was a key property for us to prove the Cotlar identity for Coxeter groups in the previous section.  We will now generalize this property to  buildings. 
Let $(\Delta, \delta)$ be a building of type $(W,S)$ where $W$ satisfies \eqref{nested-condition} relative to a certain generator. In this case, we say $(\Delta, \delta)$ is a \textit{building satisfying the nested condition} relative to the same generator. 

\medskip 

 We show in the following that the inclusion  of roots in buildings with the nested condition satisfies a certain ``transitivity''. This is the key proposition for us to show that the multiplier defined later in \eqref{eq: Def of symbol for actions on buildings} satisfies the Cotlar identity. 

\begin{prop}\label{pro;building-transitivity}
    Let $(\Delta, \delta)$ be a building of type $(W,S)$ where $W$ satisfies the nested condition relative to $u\in S$.
    Let $C,D$ and $E$ be three chambers in $\Delta$. Suppose that $\Sigma = \phi(\Sigma(W,S))$ and $\Sigma_1 = \phi_1(\Sigma(W,S))$ are two apartments containing $C,D$ and $D,E$ respectively with $\phi(K) = D=\phi_1(K)$, $\phi(wK) = C$ and $\phi_1(w_1K) = E$ for some $w$, $w_1\in W$. Suppose that $\Sigma_2 = \phi_2(\Sigma(W,S))$ is an apartment containing $C,E$ with $\phi_2(K) = C$ and $\phi_2(w_2K) = E$ for some $w_2\in W$. Then the following statements hold:
    \begin{enumerate}
        \item [(i)] If there exist $\dagger$ and $\dagger '$ in $ \{+,-\}$
        such that 
        $\phi(w\cdot H_{u}^\dagger)\subsetneq \phi(H_{u}^+)$ and $\phi_1(H_{u}^+)\subsetneq \phi_1(w_1\cdot H_{u}^{\dagger'})$, then we have 
        $$\phi_2(H_{u}^\dagger)\subsetneq \phi_2(w_2\cdot H_{u}^{\dagger'}).$$
        \item[(ii)]  If there exist $\dagger$ and $\dagger '$ in $ \{+,-\}$
        such that  $\phi(w\cdot H_{u}^\dagger)\subsetneq \phi(H_{u}^-)$ and $\phi_1(H_{u}^-)\subsetneq \phi_1(w_1\cdot H_{u}^{\dagger '})$, then we have
        $$\phi_2(H_{u}^\dagger)\subsetneq \phi_2(w_2\cdot H_{u}^{\dagger '}).$$
 \item [(iii)] If $\phi(w\cdot H_{u}^+)= \phi(H_{u}^-)$ and $\phi_1(H_{u}^-)\subsetneq \phi_1(w_1\cdot H_{u}^\pm)$, then  we have 
        $$\phi_2(H_{u}^+)\subsetneq\phi_2(w_2\cdot H_{u}^\pm).$$

    \item [(iv)] If $\phi(w\cdot H_{u}^\pm)\subsetneq \phi(H_{u}^+)$ and $\phi_1( H_{u}^+)= \phi_1(w_1\cdot H_{u}^-)$, then we have
    $$\phi_2(H_{u}^\pm)\subsetneq\phi_2(w_2\cdot H_{u}^-).$$
    \end{enumerate}
    
\end{prop}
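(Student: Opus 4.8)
The plan is to pull every root-inclusion appearing in the statement back to the model Coxeter complex $\Sigma(W,S)$, thereby reducing the whole proposition to a combinatorial assertion about the three Weyl distances $w=\delta(D,C)$, $w_1=\delta(D,E)$ and $w_2=\delta(C,E)$. An apartment is by definition an isometric image of $\Sigma(W,S)$, and such an isometry is injective by (B1); hence each of $\phi,\phi_1,\phi_2$ is a bijection onto its image, so $\phi(A)\subsetneq\phi(B)\iff A\subsetneq B$ for all sets of chambers $A,B$ (and likewise for $\phi_1,\phi_2$). Moreover $\phi(K)=D$ and $\phi(wK)=C$ give $\delta(D,C)=\delta(\phi(K),\phi(wK))=\delta_{\Sigma(W,S)}(K,wK)=w$, and similarly $w_1=\delta(D,E)$, $w_2=\delta(C,E)$. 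So, for example in part (i), the two hypotheses become $w\cdot H_u^{\dagger}\subsetneq H_u^{+}$ and $H_u^{+}\subsetneq w_1\cdot H_u^{\dagger'}$, read as inclusions of roots inside $\Sigma(W,S)$, and the conclusion to be proved becomes $H_u^{\dagger}\subsetneq w_2\cdot H_u^{\dagger'}$; parts (ii)--(iv) reduce in exactly the same way, with $H_u^{+}$ replaced by $H_u^{-}$, or by an equality, in the appropriate slots.

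Next I would rewrite these inclusions of roots in $\Sigma(W,S)$ as conditions on the reduced-word shapes of $w$, $w_1$ and $w_2$. Since $W$ satisfies the nested condition relative to $u$, Proposition~\ref{prop;W-satisfies-NestC-equi-to-WRA} gives $\mathrm{m}_{uv}\in\{2,\infty\}$ for all $v\in S\setminus\{u\}$, and Proposition~\ref{prop; equi-char-Nest-algebraic-forms} identifies each of the six relations of the form $g\cdot H_u^{\pm}=H_u^{\pm}$ or $g\cdot H_u^{\pm}\subsetneq H_u^{\pm}$ with a precise algebraic description of $g$ in terms of $W_{T_u}$, $N_u$ and $u$ (e.g. $g\cdot H_u^{+}\subsetneq H_u^{+}\iff g\in(N_u\setminus W_{T_u})u$, while $H_u^{+}\subsetneq g\cdot H_u^{+}\iff g\cdot H_u^{-}\subsetneq H_u^{-}\iff g\in u(N_u\setminus W_{T_u})$). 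Applying this dictionary, the hypotheses of each part fix whether $w$ ends with $u$, whether $w_1$ begins with $u$, and whether the ``core'' part of each lies in $W_{T_u}$; and the conclusion to be established is that $w_2$ has the matching shape.

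The one genuinely building-theoretic ingredient is a constraint tying $w_2=\delta(C,E)$ to $\delta(C,D)=w^{-1}$ and $\delta(D,E)=w_1$. I would extract it by iterating axiom (B2) along a reduced gallery $D=E_0,E_1,\dots,E_k=E$ (so $\delta(E_{i-1},E_i)=s_i$, $\delta(D,E_i)=s_1\cdots s_i$, with $s_1\cdots s_k$ a reduced word for $w_1$): this gives $\delta(C,E_i)\in\{\delta(C,E_{i-1})s_i,\ \delta(C,E_{i-1})\}$, with the value forced to be $\delta(C,E_{i-1})s_i$ whenever $l(\delta(C,E_{i-1})s_i)>l(\delta(C,E_{i-1}))$. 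In particular $w_2=w^{-1}w_1$ whenever $l(w^{-1}w_1)=l(w)+l(w_1)$, and in general $w_2$ is obtained from the (possibly non-reduced) product $w^{-1}w_1$ only by contracting a subword located at the junction of the two factors. The decisive point is then that, under the algebraic conditions found in the previous step, the junction letter is a $u$ (coming from the last letter of $w^{-1}$ and/or the first letter of $w_1$) which the restriction $\mathrm{m}_{uv}\in\{2,\infty\}$ prevents from being absorbed into the contractible subword; hence the first step of the iteration is length-increasing and $w_2$ inherits exactly the shape demanded by the conclusion, which, translated back through $\phi_2$, is precisely $\phi_2(H_u^{\dagger})\subsetneq\phi_2(w_2\cdot H_u^{\dagger'})$ (respectively its analogue in (ii)--(iv)). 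I expect this cancellation analysis --- isolating a short lemma to the effect that the relevant $u$'s are ``transparent'', i.e. cannot disappear when $\delta(C,D)\cdot\delta(D,E)$ is contracted into $\delta(C,E)$ --- to be the main obstacle; with it in hand, the four parts are then disposed of by the same sign-by-sign bookkeeping.
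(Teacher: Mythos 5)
Your proposal follows essentially the same route as the paper's proof: reduce via the isometries $\phi,\phi_1,\phi_2$ to inclusions of roots in $\Sigma(W,S)$, translate those inclusions into reduced-word shapes of $w$, $w_1$, $w_2$ through Proposition \ref{prop; equi-char-Nest-algebraic-forms}, and then pin down $w_2=\delta(C,E)$ by iterating (B2) along a minimal gallery, using $\mathrm{m}_{uv}\in\{2,\infty\}$ to show the decisive $u$'s cannot be absorbed. The only caveat is that your deferred ``transparency'' lemma is where all the work lives --- it is not enough that the first (B2) step is length-increasing; one must track through \emph{every} step (as the paper does by shuffling the letters commuting with $u$ to one end of the reduced word and inducting up to the first letter free with $u$) that any letter which can be absorbed lies in $W_{T_u}$, and hence never removes the leading or trailing $u$ nor pushes the core of $w_2$ into $W_{T_u}$.
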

\begin{proof}
    (i) Let $T_u=\{s\in S: \mathrm{m}_{su} = 2\}$ and $N_u \subseteq W$ be the set of elements whose reduced expressions neither start by $u$ nor end by $u$.
    Suppose that $\phi(w\cdot H_{u}^+)\subsetneq \phi(H_{u}^+)$ and $\phi_1(H_{u}^+)\subsetneq \phi_1(w_1\cdot H_{u}^+)$. Since $\phi$ and $\phi_1$ are surjective isometries, the two inclusions are equivalent to the following 
    $$w\cdot H_{u}^+\subsetneq H_{u}^+ \subsetneq w_1\cdot H_{u}^+.$$
    It follows from Proposition \ref{prop; equi-char-Nest-algebraic-forms} that the inclusions above imply that $w = hu$ and $w_1 = uh_1$ for some $h,h_1\in N_u\setminus W_{T_u}$. Let $h = s_1 \ldots s_k$ be a reduced expression with $k\geq 1$. Note that $$\delta(C,D)=\delta(\phi(wK),\phi(K))=w^{-1} = us_k\ldots s_1$$ and $$\delta(D,E)=\delta(\phi_1(K), \phi_1(w_1k))=w_1.$$
   We claim that $\delta(C,E)=w_2\in u( N_u\setminus W_{T_u})$. 
   If that is the case, applying Proposition \ref{prop; equi-char-Nest-algebraic-forms} again, we have 
    $$w_2\cdot H_u^-\subsetneq H_u^- \Longleftrightarrow  H_u^+\subsetneq w_2\cdot H_u^+.$$
    Since $\phi_2$ is an isometry, it implies that  $\phi_2(H_u^+)\subsetneq \phi_2(w_2\cdot H_u^+)$. Now we prove the claim.
For the given reduced expression of $h$ above, we first shuffle the letters according to the following rule: whenever some $s_j$ with $1\leq j\leq k$ commutes with $u$, we use the $M$-operations to move $s_j$ to the position as closest as possible to the end of the word. Then we get a new reduced expression of $h$:
$$h=s'_k\ldots s'_1.$$  
Since $h\notin W_{T_u}$, there exists a smallest $i_0\in \mathbb{N}$ with $1\leq i_0 \leq k$ such that $\mathrm{m}_{s'_{i_0}u}\neq 2$ and $\mathrm{m}_{s'_{i}u}= 2$ if $1\leq i<i_0$. 
By the assumption and Proposition \ref{prop;W-satisfies-NestC-equi-to-WRA}, for any $u\ne s\in S$, $\mathrm{m}_{su}\in\{2,\infty\}$, and then we have that $\mathrm{m}_{s'_{i_0}u}= \infty$, in other words, $s'_{i_0}$ and $u$ are free. 

If $i_0=1$, then we have $s_1'$ and $u$ are free. In this case, we have $l(s'_1 w_1)>l(w_1)$ since $w_1$ admits a reduced expression starting with $u$. Moreover, in the case $i_0=1$, if we consider $s_2'$, by the rule we shuffle the original reduced expression of $h$, we know that either $s_2'$ and $u$ are free, or $s_2'$ does not commute with $s_1'$. In both cases, we have $l(s_2's'_1 w_1)>l(s_1'w_1)$. Similarly, by induction we will see that $l(s'_j\ldots s'_1 w_1)>l(s'_{j-1}\ldots s_1'w_1)$ for any $2\leq j\leq k$, and $l(us'_k\ldots s'_1 w_1)>l(s'_{k}\ldots s_1'w_1)$.
Note that if we set $C_1=\phi (s_1'K)\in \Sigma$, we have $\delta (C_1, D)=s_1'$. Since $\delta (D,E)=w_1$ and by (B2) in Definition \ref{def: Buildings}, we get that $\delta(C_1, E)=s_1'w_1=\delta (C_1, D)\delta (D,E)$. 
If we set $C_j=\phi (s_j'\ldots s_1'K)\in \Sigma$, then $\delta (C_j, E)=s'_j\ldots s'_1 w_1=\delta(C_j, C_{j-1})\delta (C_{j-1}, E)$ for any $2\leq j\leq k$. Moreover, 
$$\delta (C, E)=\delta(C, C_{k})\delta (C_{k}, E)=us'_k\ldots s'_1w_1=us'_k\ldots s'_1 uh_1,$$
which is in $u( N_u\setminus W_{T_u})$ 
since $h_1\in N_u\setminus W_{T_u}$ and $s_1'$ together with $u$ is free. 

If $i_0\geq 2$, then we have $s_1'$ commutes with $u$. In this case, we have either $l(s'_1 w_1)>l(w_1)$ or $l(s'_1 w_1)<l(w_1)$ depending on the expression of $h_1$, which implies that $\delta(C_1, E)=s'_1w_1$ or $w_1$. However, since $s_1'$ commutes with $u$ and $w_1$ admits a reduced expression starting with $u$, both $s'_1w_1$ and $w_1$ admit a reduced expression starting with $u$. Similarly, we get that there exist a sequence of natural numbers  $1\le \ell_1\le \ldots \le \ell_j<i_0$ such that 
$$\delta(C_{i_0-1}, E)= s'_{\ell_j}\ldots s'_{\ell_1}w_1$$
and it admits a reduced expression starting with $u$. Note that since $h\in N_u$, any $s'_{\ell_i}$ with $1\leq i<i_0$ above can not be equal to $u$. Moreover, since we also have $h_1\in N_u\setminus W_{T_u}$,  $s'_{\ell_j}\ldots s'_{\ell_1}w_1$ does not admit reduced expressions ending with $u$. Then by a similar argument as in the case of $i_0=1$, we conclude that $\delta (C, E)$ is in $u( N_u\setminus W_{T_u})$. 

    Now consider the case $\phi(w\cdot H_{u}^-)\subsetneq \phi(H_{u}^+)$ and $\phi_1(H_{u}^+)\subsetneq \phi_1(w_1\cdot H_{u}^-)$ which are equivalent to 
    $$w\cdot H_{u}^-\subsetneq H_{u}^+ \subsetneq w_1\cdot H_{u}^-.$$
By Proposition \ref{prop; equi-char-Nest-algebraic-forms}, there exist $h,h_1\in N_u\setminus W_{T_u}$ such that $w$ and $w_1$ admit the reduced expressions $w = h$ and $w_1 = uh_1u$.  It suffices to see whether the starting and ending generators are $u$  in reduced expressions of $w_2$. By the similar argument as above  we get that $w_2\in ( N_u\setminus W_{T_u})u$. 
Then applying Proposition \ref{prop; equi-char-Nest-algebraic-forms} again, we have 
    $$w_2\cdot H_u^+\subsetneq H_u^+ \Longleftrightarrow H_u^-\subsetneq w_2\cdot H_u^-,$$
    and then $\phi_2(H_u^-)\subsetneq \phi_2(w_2\cdot H_u^-)$.

    For the case $\phi(w\cdot H_{u}^+)\subsetneq \phi(H_{u}^+)$ and $\phi_1(H_{u}^+)\subsetneq \phi_1(w_1\cdot H_{u}^-)$, we have 
    $$w\cdot H_{u}^+\subsetneq H_{u}^+ \subsetneq w_1\cdot H_{u}^-.$$ 
    By Proposition \ref{prop; equi-char-Nest-algebraic-forms}, we have $w = hu$ and $w_1 = uh_1u$, where $h,h_1\in N_u\setminus W_{T_u}$. Similarly, we get that $w_2\in u( N_u\setminus W_{T_u})u$. 
    $$w_2\cdot H_u^+\subsetneq H_u^- \Longleftrightarrow H_u^+\subsetneq w_2\cdot H_u^-,$$
    and then $\phi_2(H_u^+)\subsetneq \phi_2(w_2\cdot H_u^-)$.

For the last case $\phi(w\cdot H_{u}^-)\subsetneq \phi(H_{u}^+)$ and $\phi_1(H_{u}^+)\subsetneq \phi_1(w_1\cdot H_{u}^+)$, we have 
$$w\cdot H_{u}^-\subsetneq H_{u}^+ \subsetneq w_1\cdot H_{u}^+.$$ 
By Proposition \ref{prop; equi-char-Nest-algebraic-forms}, we have $w = h$ and $w_1 = uh_1$, where $h,h_1\in N_u\setminus W_{T_u}$. Similarly, we get that $w_2\in N_u\setminus W_{T_u}$. Then it implies that 
$$w_2\cdot H_u^-\subsetneq H_u^+ \Longleftrightarrow H_u^-\subsetneq w_2\cdot H_u^+,$$
    and then $\phi_2(H_u^-)\subsetneq \phi_2(w_2\cdot H_u^+)$.

    (ii) 
    The argument for part (ii) is similar to that in part (i), so we omit the proof here. 

    (iii) Note that in this case, $w = uh$ and $w_1$ is either $h_1u$ or $h_2$ for $h\in W_{T_u}$ and $h_1,h_2\in N_u\setminus W_{T_u}$. Then a similar argument will give that  $\delta(C,E)$ is in $u( N_u\setminus W_{T_u})u$ or $u( N_u\setminus W_{T_u})$.  
    This implies the assertion by applying  Proposition \ref{prop; equi-char-Nest-algebraic-forms}.

    (iv) In this case, $w$ is either $h_1u$ or $h_2$ and $w_1 = uh$ for $h\in W_{T_u}$ and $h_1,h_2\in N_u\setminus W_{T_u}$. Then part (iii) implies that 
    $$H_u^+\subsetneq w_2^{-1}\cdot H_u^{\pm}\Longleftrightarrow w_2\cdot H_u^+\subsetneq H_u^{\pm} \Longleftrightarrow H_u^{\pm}\subsetneq w_2\cdot H_u^-$$
    and then we get the assertion.
\end{proof}

\begin{rem}
    When we restrict ourselves to considering the standard thin building of type $(W,S)$ which satisfies the nested condition, the above proposition will be much easier to prove. The main difficulty to treat Proposition \ref{pro;building-transitivity} is due to the fact that  in buildings if $\delta(C,D) = w$, $\delta(D,E) = w_1$ and $\delta(C,E) = w_2$, we do not always have $w_2 = ww_1$. 
\end{rem}

\subsection{Hilbert transforms on groups acting on buildings satisfying the nested conditon}
In this subsection, we will first give the definition of the Hilbert transform on groups which admit actions on buildings  as a Fourier multiplier. Then we will introduce our main result in this subsection showing that when the building satisfies the nested condition, the symbol of the defined Fourier multiplier satisfies \eqref{eq: Cotlar}. Throughout this subsection,  we will let $(\Delta,\delta)$ be a building of type $(W,S)$ where $W$ satisfies \eqref{nested-condition} relative to $u\in S$.

Fix a chamber $C_0\in \Delta$. Define a map $\widetilde m$ on $\Delta$ as the following: For any chamber $D\in \Delta$,

\begin{equation}\label{eq: symbol on buildings}
 \widetilde m(D) := \left\{\begin{array}{l}
		1, \quad \text{if} \; D\in {\phi(H_u^+)}\\
		-1, \quad \text{if} \;  D\in {\phi(H_u^-)} 
	\end{array} \right. 
\end{equation}
where $\Sigma =\phi(\Sigma(W,S))$ is any apartment containing $D$ and $C_0$ (existence of such an apartment follows from Remark \ref{rem: existence of apartment}), and $\phi$ is chosen to satisfy that $\phi(K) = C_0$, which is possible due to Remark \ref{rem: change phi}. 

Note that there might be more than one apartment containing $D$ and $C_0$, so it remains to show that $\widetilde m$ above is well-defined. 
We recall that for the chosen $\phi$, $\phi(H_u^+)=\{D\in \Sigma: d(D,C_0)< d(D,u_{\Sigma}C_0)\}$ and $\phi(H_u^-)=\{D\in \Sigma: d(D,C_0)>d(D,u_{\Sigma}C_0)\}$ by Lemma \ref{lem;root in building}. Since $D,C_0, u_{\Sigma}C_0$ are in the same apartment, by Proposition \ref{prop; surj-isometry-apartment-Complex}, we have $\delta (D, u_{\Sigma}C_0)=\delta (D, C_0)\delta (C_0, u_{\Sigma}C_0)=\delta (D, C_0)u$. For any $D\in \Delta$, there exists $w\in W$ such that $\delta(C_0,D) = w$. 
We have $$D\in \phi(H_u^+) \Longleftrightarrow l(w)<l(uw),$$
since $d(D,C_0)=l(\delta(D,C_0))=l(w^{-1})<d(D,u_{\Sigma}C_0)=l(\delta(D,u_{\Sigma}C_0))=l(w^{-1}u)$. Similarly, we have 
$$D\in \phi(H_u^-) \Longleftrightarrow l(w)>l(uw).$$
Therefore, suppose that $\Sigma_1 = \phi_1(\Sigma(W,S))$ and $\Sigma_2=\phi_2(\Sigma(W,S))$ are two apartments containing both $D$ and $C_0$ with $\phi_1(K) = C_0 = \phi_2(K)$. Then we have $$D\in \phi_1(H_u^+)\iff l(w)<l(uw)\iff D\in\phi_2(H_u^+),$$
and $$D\in \phi_1(H_u^-)\iff l(w)>l(uw)\iff D\in\phi_2(H_u^-),$$
which imply that $\widetilde{m}$ is independent of the choice of apartments.

By the above discussion, we see that the function defined in \eqref{eq: symbol on buildings} is closely related to the multiplier $m_W^u$ defined on $W$ in \eqref{eq: def of symbol on W} and the retraction defined in Proposition \ref{prop; surj-isometry-apartment-Complex}. For any $D\in \Delta$ with $\delta(C_0,D) = w\in W$, applying \eqref{eq;algebric-def-symbol-w RA-Cg}, we have
$$
\widetilde{m} (D)= m_W^u (\psi_{C_0}(D)) = m_W^u(w).
$$

Now we consider a locally compact group $G$ that acts on $(\Delta, \delta)$ isometrically. The function defined on the building in \eqref{eq: symbol on buildings} induces a function  $m_G^u$ on $G$ by
\begin{equation}\label{eq: Def of symbol for actions on buildings}
    m_G^u(g) := \widetilde m(gC_0) = \left\{\begin{array}{l}
		1, \quad \text{if} \; gC_0\in \phi(H_u^+)\\
		-1, \quad \text{if} \;  gC_0\in \phi(H_u^-) 
	\end{array} \right.
\end{equation}
where $\phi(\Sigma(W,S))=\Sigma$ is any apartment containing $C_0$ and $gC_0$ such that $\phi(K) = C_0$.
For any $g\in G$ with $\delta(C_0,gC_0) = w\in W$, the function above also admits an algebraic form: 
\begin{equation}\label{eq;Def of algebric form of m on buildings}
    m_G^u(g) = \left\{\begin{array}{l}
		1, \quad \text{if } \;  l(w)<l(uw)\\
		-1, \quad \text{if } \;   l(w)>l(uw). 
	\end{array} \right.
\end{equation}
Now we show the proof of \ref{theoremC} in the following where the function $m$ defined above on $G$ satisfies the Cotlar identity.
\medskip


\begin{proof}[Proof of \ref{theoremC}]
 (i) That $G_0$ is a subgroup of $G$ is a consequence of the fact that $W_{T_u}$ is a parabolic subgroup of $W$.
It is easy to see $1\in G_0$. Suppose $g\in G_0$ with $\delta(C_0, gC_0) = w_0\in W_{T_u}$. It follows from 
 Proposition \ref{prop;delta-equal-inverse} and the fact that $G$ acts isometrically that
 $\delta(C_0,g^{-1}C_0) = \delta(g^{-1}C_0,C_0)^{-1}=\delta(C_0,gC_0)^{-1} =w_0^{-1}\in W_{T_u}$ which implies  $g^{-1}\in G_0$. Moreover, suppose $ h\in G_0$ with  $\delta(C_0,hC_0) = w_0'\in W_{T_u}$. Let $w_0=s_1 \ldots s_k$ be a reduced expression of $w_0$ with $s_1,\ldots,s_k\in T_u$. 
 It follows  from 
 (B2) of Definition \ref{def: Buildings} 
 that there is some $j\in\{1,\ldots,k\}$ and a sequence of natural numbers $1\le \ell_1\le \ldots\le \ell_j\le k$ such that $\delta(g^{-1}C_0,hC_0) = \delta(C_0,ghC_0) = s_{\ell_1}\ldots s_{\ell_j}w_0'$. Since $w_0', s_i\in W_{T_u}$ for $i= \ell_1,\ldots,\ell_k$, we get $gh\in G_0$.

 Next we show the multiplier $m_G^u$ satisfies the Cotlar identity.	
Suppose that $m_G^u(g)\ne m_G^u(h)$ with $m_G^u(g) = 1$ and $m_G^u(h) = -1$. 
Then there are apartments $\Sigma=\phi(\Sigma(W,S))$ and $\Sigma_1=\phi_1(\Sigma(W,S))$ such that $gC_0, C_0\in \Sigma$ and $C_0, hC_0\in \Sigma_1$ with $\phi(K) = C_0 = \phi_1(K)$. Suppose that $\phi(wK) = gC_0$ and $\phi_1(w_1K) = hC_0$ for some $w,w_1\in W$. Since $gC_0\in \phi(H_u^+)$ and $hC_0\in \phi_1(H_u^-)$, we have $wK\in H_u^+$ and $w_1K\in H_u^-$. Since we are assuming $g\notin G_0$, this implies $\delta (C_0, gC_0)= \delta (\phi(K), \phi(wK))=\delta_W(K, wK)=w\notin W_{T_u}$. Applying  Proposition \ref{prop;nested-condition}, for $w$, we have one of the following cases holds:
$$w\cdot H_u^+\subsetneq H_u^+\quad \text{or}\quad w\cdot H_u^-\subsetneq H_u^+,$$
 and for $w_1$, we have one of the following cases holds:
 $$w_1\cdot H_u^+ = H_u^-,\quad w_1\cdot H_u^+\subsetneq H_u^- \quad \text{or}\quad w_1\cdot H_u^-\subsetneq H_u^-.$$
For each case the argument is similar and the main tools we use are Proposition \ref{prop;nested-condition} and Proposition \ref{pro;building-transitivity}. In the following, we give the proof for the  case
$$w\cdot H_u^+\subsetneq H_u^+\quad \text{and}\quad w_1\cdot H_u^+\subsetneq H_u^-,$$
that is
\begin{equation}\label{eq;Main-theorem-building}
    w\cdot H_u^+\subsetneq H_u^+ \subsetneq w_1\cdot H_u^-.
\end{equation}
It follows from $H^{+}_u\subsetneq w^{-1}\cdot H_u^+$ and Proposition \ref{prop;nested-condition} that $w^{-1}K\in H_u^-$. Therefore, $\phi(w^{-1}K)\in \phi (H_u^-)$. Moreover, since $\delta(C_0,g^{-1}C_0) = w^{-1}$, $\delta(\phi(K), \phi(w^{-1}K))=\delta(C_0, \phi(w^{-1}K))=w^{-1}$ 
and $C_0, g^{-1}C_0, \phi(w^{-1}K)$ are  in the apartment $\Sigma$, we get $g^{-1}C_0= \phi(w^{-1}K)\in \phi (H_u^-)$, i.e. $m_G^u(g^{-1}) = -1$. Let $w_2 = \delta(gC_0,hC_0)=\delta(C_0,g^{-1}hC_0)$ and $\Sigma_2 = \phi_2(\Sigma(W,S))$ be the apartment containing $gC_0$ and $hC_0$ with $\phi_2(K) = gC_0$. By applying Proposition \ref{pro;building-transitivity} (i) to \eqref{eq;Main-theorem-building}, we get 
$$\phi_2(H_u^+)\subsetneq \phi_2(w_2\cdot H_u^-).$$
Thus $H_u^+\subsetneq w_2\cdot H_u^-$ and by Proposition \ref{prop;nested-condition} again, we have $w_2 K \in H_u^-$ and then $m_G^u(g^{-1}h) = m_G^u(g^{-1}) = -1$.
Therefore, we get the Cotlar identity.
The case when $m_G^u(g) = -1$ and $m_G^u(h) = 1$ can be treated in a similar fashion. 

(ii)  Let $g\in G_0$ with $\delta(C_0, gC_0) = w_0=s_1 \ldots s_k\in W_{T_u}$ as in the first paragraph of the proof of (i) and let $h\in G$ with $\delta(C_0, hC_0)=w_1$ as in the second paragraph of that in (i).
Since $\delta(g^{-1}C_0,hC_0) = \delta(C_0,ghC_0) = s_{\ell_1}\ldots s_{\ell_j}w_1$ and $s_{\ell_i}\in T_u$ for any $1\leq i\leq j$, we have
    $$m_G^u(h) = 1\, (\text{i.e. }l(w_1)<l(uw_1) )\iff m_G^u(gh) = 1\, (\text{i.e. } l(s_{\ell_1}\ldots s_{\ell_{j}}w_1)< l(s_{\ell_1}\ldots s_{\ell_{j}}uw_1))$$
    and
     $$m_G^u(h) = -1 \,(\text{i.e. } l(w_1)>l(uw_1) )\iff m_G^u(gh) = -1\, (\text{i.e. }l(s_{\ell_1}\ldots s_{\ell_{j}}w_1)> l(s_{\ell_1}\ldots s_{\ell_{j}}uw_1)).$$
   Therefore, $m_G^u(h) = m_G^u(gh)$. By applying \ref{prop; Lp-bounded-introduction}, we get that $T_{m_G^u}$ is $L_p$-bounded for any $1< p< \infty$.
\end{proof}

\begin{rem}
If we consider $(\Delta, \delta) = (W, \delta_W)$  where $W$ is a Coxeter group satisfying \eqref{nested-condition} relative to $u$ in \ref{theoremC}, then the result recovers \ref{theoremA}.

\end{rem}

\medskip
When considering groups acting on buildings,  we have the following result connecting
the existence of global fixed chambers with the form of the Fourier multiplier. We will say the multiplier defined in \eqref{eq: Def of symbol for actions on buildings} is trivial if it is
constant for any $g\in G\setminus G_0$.

\begin{prop} 
    Let $(\Delta, \delta)$, $C_0$, $G$, $G_0$ and $u$ be as in \ref{theoremC}.
    If $G$ has a global fixed chamber $C'_0$ (that is, for any $g\in G$, $gC'_0 = C'_0$) then the multiplier $m_G^u$ defined in \eqref{eq: Def of symbol for actions on buildings} is trivial on $G$.
\end{prop}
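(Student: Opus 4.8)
The plan is to show that $m_G^u$ takes a constant value on $G\setminus G_0$, namely $m_W^u(\delta(C_0,C'_0))$. The starting point is to exploit the fixed chamber: since $gC'_0=C'_0$ for every $g\in G$ and $G$ preserves the Weyl distance, $\delta(C'_0,gC_0)=\delta(gC'_0,gC_0)=\delta(C'_0,C_0)$ for all $g$; write $v:=\delta(C'_0,C_0)$, so $\delta(C_0,C'_0)=v^{-1}$ by Proposition \ref{prop;delta-equal-inverse}. I would first dispose of the degenerate case $v\in W_{T_u}$: then $v^{-1}\in W_{T_u}$, so a minimal gallery from $C_0$ to $C'_0$ has all its panels of type in $T_u$ (a reduced word of an element of the parabolic $W_{T_u}$ uses only letters of $T_u$), and applying (B2) of Definition \ref{def: Buildings} repeatedly along this gallery to the chamber $gC_0$ shows that $\delta(C_0,gC_0)$ is obtained from $\delta(C'_0,gC_0)=v\in W_{T_u}$ by successive left multiplications by elements of $T_u$, hence $\delta(C_0,gC_0)\in W_{T_u}$, i.e. $g\in G_0$. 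So in this case $G=G_0$ and there is nothing to prove; henceforth assume $v\notin W_{T_u}$, which also makes the target value $m_W^u(v^{-1})$ genuinely $\pm1$.

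Next I would use that, as recorded in the proof of \ref{theoremC} (the algebraic form \eqref{eq;Def of algebric form of m on buildings}), $m_G^u(g)=\widetilde m(gC_0)=m_W^u(\delta(C_0,gC_0))$, so $m_G^u(g)$ depends only on $\delta(C_0,gC_0)$. Fix an apartment $\Sigma'$ containing $C_0$ and $C'_0$ (Remark \ref{rem: existence of apartment}) and the isometry $\phi'\colon\Sigma(W,S)\to\Sigma'$ with $\phi'(K)=C_0$ (Remark \ref{rem: change phi}), so that $\phi'(v^{-1}K)=C'_0$; retracting $\Delta$ onto $\Sigma'$ centered at $C_0$ (see \cite{abramenko2008buildings}), which preserves $\delta(C_0,-)$ and is distance non-increasing, and using the cocycle identity inside apartments (Proposition \ref{prop; surj-isometry-apartment-Complex}), one reduces the whole question to the following statement in $\Sigma'\cong\Sigma(W,S)$: if $w:=\delta(C_0,gC_0)\notin W_{T_u}$ then, since $l(vw)=d(C'_0,\rho(gC_0))\le d(C'_0,gC_0)=l(v)$, the chamber $wK$ lies in the same root of the wall $H_u$ as $v^{-1}K$, i.e. $m_W^u(w)=m_W^u(v^{-1})$. (Equivalently, working directly in $\Delta$: $m_W^u(\delta(C_0,X))=+1$ exactly when $C_0$ is the gate of the $u$-panel of $C_0$ towards $X$, so the claim is that this gate is $C_0$ for $gC_0$ precisely when it is for $C'_0$, whenever $gC_0\neq C_0$.)

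The main obstacle is exactly this reduced statement, and it is genuinely where the nested condition must be used: it cannot be extracted from distance/wall counting alone, since the position of $wK$ and of $w\cdot H_u^{\pm}$ relative to $H_u$ is governed, through Proposition \ref{prop; equi-char-Nest-algebraic-forms}, by which generators are joined to $u$ by an $\infty$-edge. I would run the case analysis of the six algebraic forms in Proposition \ref{prop; equi-char-Nest-algebraic-forms} for $v$ and for $w$, and use that $g\notin G_0$ forces $w\neq 1$ together with the constraint $l(vw)\le l(v)$ to rule out the configuration in which $wK$ and $v^{-1}K$ lie on opposite sides of $H_u$; the bookkeeping here parallels the proofs of Proposition \ref{pro;building-transitivity} and Proposition \ref{prop;W-satisfies-NestC-equi-to-WRA}. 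Granting this, every $g\in G\setminus G_0$ satisfies $m_G^u(g)=m_W^u(\delta(C_0,C'_0))$, so $m_G^u$ is trivial on $G$. The point to keep in mind throughout is that $C_0$, $gC_0$ and $C'_0$ do not lie in a common apartment once $gC_0\neq C_0$ (the Weyl distances would force $\delta(C_0,gC_0)=v^{-1}v=1$), which is precisely why the "same side" information visible for $C'_0$ in $\Sigma'$ has to be transported across apartments via the nested condition rather than read off directly.
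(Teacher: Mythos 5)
Your opening moves are sound and coincide with the paper's: the global fixed chamber gives $\delta(C'_0,gC_0)=\delta(gC'_0,gC_0)=\delta(C'_0,C_0)=:v$ for every $g$ (via Proposition \ref{prop;delta-equal-inverse} and isometry of the action), and your disposal of the degenerate case $v\in W_{T_u}$, by pushing $\delta(\,\cdot\,,gC_0)$ along a gallery of type in $T_u$ with (B2), correctly yields $G=G_0$. The gap is precisely in the step you flag as ``the main obstacle'': you discard the exact identity $\delta(C'_0,gC_0)=v$ in $W$ and retain only its length consequence $l(vw)\le l(v)$, where $w=\delta(C_0,gC_0)$. The reduced combinatorial claim --- that $l(vw)\le l(v)$ together with $v,w\notin W_{T_u}$ forces $m_W^u(w)=m_W^u(v^{-1})$ --- is false, so no case analysis over the six forms of Proposition \ref{prop; equi-char-Nest-algebraic-forms} run on these hypotheses alone can close the argument. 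Concretely, take $W=\langle u,s,t\rangle$ right-angled with $\mathrm{m}_{ut}=2$ and $\mathrm{m}_{us}=\mathrm{m}_{st}=\infty$, so that $W_{T_u}=\{1,t\}$, and set $v=st$, $w=ut$. Then $v,w\notin W_{T_u}$ and $vw=st\cdot tu=su$ has length $2=l(v)$, yet $w=ut=tu$ admits a reduced expression starting with $u$ while $v^{-1}=ts$ does not, so $m_W^u(w)=-1\ne 1=m_W^u(v^{-1})$.

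What rescues the statement --- and what the paper's proof actually uses --- is the equality $\delta(C'_0,gC_0)=v$ as an element of $W$, not merely an inequality of lengths. Tracking $\delta(C'_0,\,\cdot\,)$ along a minimal gallery from $C_0$ to $gC_0$ via (B2) of Definition \ref{def: Buildings}, exactly as in the proof of Proposition \ref{pro;building-transitivity}, one finds that if $\widetilde m(C'_0)\ne \widetilde m(gC_0)$ then $\delta(C'_0,gC_0)=v\,u\,s_{\ell_1}\cdots s_{\ell_j}$, where the letter $u$ is forced at the first step (no reduced expression of $v$ ends in $u$, so $l(vu)>l(v)$ and (B2) leaves no choice) and, by the nested condition, cannot be cancelled by the remaining letters; hence $\delta(C'_0,gC_0)\ne v$, a contradiction. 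In the example above this is visible: a gallery of type $(u,t)$ from $C_0$ forces $\delta(C'_0,D_1)=stu$ and then $\delta(C'_0,gC_0)\in\{su,stu\}$, neither equal to $v=st$, so such a $g$ simply cannot occur when $C'_0$ is globally fixed --- which is exactly the information your length-only reduction throws away. To complete your proof you would need to replace that reduction by this gallery computation (or some other use of the exact Weyl distance $\delta(C'_0,gC_0)=v$).
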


\begin{proof}
We simply denote $m_G^u$ by $m$.
    Firstly, suppose that $G$ has a global fixed chamber $C'_0$. It is obvious that $m$ is trivial when $C_0 = C'_0$, or $C_0\ne C'_0$ but $G_0 =G$. So it suffices to consider the case $C_0\ne C'_0$ and $G_0 \ne G$. Suppose there exists $g\in G\setminus G_0$ such  that $\widetilde m(C'_0) \ne \widetilde m(gC_0)$. There are two apartments $\Sigma = \phi(\Sigma(W,S))$ and $\Sigma_1 = \phi_1(\Sigma(W,S))$ containing $C'_0, C_0$ and $C_0,gC_0$ respectively such that $\phi(K) = C_0 = \phi_1(K)$, $\phi(wK) = C'_0$ and $\phi_1(w_1K) = gC_0$. Then we have
    \begin{equation}\label{equation; glbal-fixed-chamber-the-first}
       \delta(C'_0,C_0) = w^{-1}\quad \text{and} \quad\delta(C_0,gC_0) = w_1. 
    \end{equation}
   Let $N_u \subseteq W$ be the set of elements whose reduced expressions neither start by $u$ nor end by $u$ and $T_u = \{s\in S: \mathrm{m}_{su} = 2\}$.  
    First, we consider the case $\widetilde m(C'_0) = 1$ and $\widetilde m(gC_0) = -1$. On the one hand, since $C'_0$ is a global fixed chamber, it follows from the first equation in \eqref{equation; glbal-fixed-chamber-the-first} that 
    \begin{equation}\label{equation; glbal-fixed-chamber-the-second}
    \delta(C'_0,gC_0)=\delta(g^{-1}C'_0,C_0) = \delta(C'_0,C_0) = w^{-1}.
    \end{equation}
On the other hand, $\widetilde m(C'_0) = 1$ and $\widetilde m(gC_0) = -1$ imply that $C_0'\in \phi(H_u^+)$ and $gC_0\in \phi_1(H_u^-)$ and then 
$w\in H_u^+$ and $w_1\in H_u^-$. It follows from Proposition \ref{prop;nested-condition} and Proposition \ref{prop; equi-char-Nest-algebraic-forms} that reduced expressions of $w^{-1}$ are one of the following forms:
$$w^{-1} = h,\quad w^{-1}=uh_1\quad \text{or}\quad w^{-1} \in W_{T_u};$$
and reduced expressions of $w_1$ are one of the following forms:
$$w_1 = u,\quad w_1 = uh_2\quad \text{or}\quad w_1 =uh_3u,$$
where $h,h_i\in N_u\setminus W_{T_u}$ for $i=1,2,3$. We only consider the case  $w_1 = uh_2$ with a reduced expression $s_1\ldots s_k$ of $h_2$, since the argument for the other two cases are similar.
Since reduced expressions of $w^{-1}$ do not end by $u$, we have $l(w^{-1}u)>l(w^{-1})$.
Let $\phi_1(uK) = C_1$. We have $\delta(C'_0,C_1) = w^{-1}u=\delta(C'_0,C_0)\delta(C_0, C_1)$. Then by a similar argument as in Proposition \ref{pro;building-transitivity} (i) and by induction,  we get $\delta(C'_0,gC_0) = w^{-1}us_{\ell_1}\ldots s_{\ell_j}$ for some $j\in\{1,\ldots,k\}$ and $1\le \ell_1\le \ldots\le \ell_j\le k$. This implies that 
$$\delta(C'_0,gC_0) = w^{-1}us_{\ell_1}\ldots s_{\ell_j}\ne w^{-1}$$
    which contradicts \eqref{equation; glbal-fixed-chamber-the-second}.
The argument for the case where $\widetilde m(C'_0) = -1$ and $\widetilde m(gC_0) = 1$ is similar and then we have $\widetilde m(C'_0) = \widetilde m(gC_0)$ for any $g\in G\setminus G_0$, i.e. $m(g)$ is constant.



\end{proof}

\subsection{A finer model for Hilbert transforms}
The multiplier defined in \eqref{eq: Def of symbol for actions on buildings} only takes two values $1$ and $-1$ since the model we used in \eqref{eq: Def of symbol for actions on buildings} only distinguishes the positive and negative roots in a certain apartment. 
In the following, we show that when a building with the nested condition satisfies some extra assumptions, we will have a finer model to define our multiplier. In particular, this new model will recover the multiplier defined on groups which admit non-trivial actions on simplicial trees  in \cite{gonzalez2022non-commutative}.

Let $(\Delta, \delta)$ be a building of type $(W,S)$. Assume that there is a wall $H_0\in \Delta$ such that when removing $H_0$ from the building, the complement can be divided into $n$ connected components for $n\in \mathbb{N}_{\ge2}\cup \infty$, i.e.  
\begin{equation}\label{eq: assump on buildings}
 \Delta\setminus H_0=\bigsqcup_{i=0}^{n-1}\Delta_i.   
\end{equation} 
Before studying the groups that admit actions on these buildings, we will first discuss a few properties of the buildings that satisfy the assumption above in the following.  The proposition below will also play important roles in the proof of the main theorem of this subsection. 

\begin{prop}\label{prop: properties of our special building}
 Let $(\Delta, \delta)$ be a building satisfying \eqref{eq: assump on buildings}. Fix a chamber $C_0$ whose $u$-face lies in the wall $H_0$. Then we have 
 \begin{enumerate}
 \item[(i)] For any two chambers $C$ and $D$ in different connected components of $\Delta\setminus H_0$, any apartment containing $C,D$ must contain the wall $H_0$. 
 \item[(ii)] For every connected component of $\Delta\setminus H_0$, there is a unique chamber $u$-equivalent to $C_0$ (the $u$-face of the unique chamber lies in $H_0$).
 \item[(iii)] The wall $H_0$ is the only wall in any apartment containing the $u$-face of $C_0$.
 \end{enumerate}
\end{prop}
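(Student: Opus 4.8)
The plan is to reduce all three assertions to one \emph{locality} property of the wall $H_0$ and then establish that property by an apartment-intersection argument. The property I would isolate is: \textbf{(K)} \emph{if a panel $p$ of $\Delta$ lies in $H_0$, then every apartment $\Sigma$ containing $p$ contains the whole wall $H_0$, and $H_0$ is the unique wall of $\Sigma$ through $p$.} Granting (K), part (iii) is immediate: the $u$-face $p_0$ of $C_0$ lies in $H_0$, so every apartment through $p_0$ contains $H_0$, and Remark \ref{rem; unique-wall}, transported to that apartment, says $H_0$ is its only wall through $p_0$. Part (i) is nearly as quick: if $\Sigma$ is an apartment with $C,D\in\Sigma$, then a minimal gallery from $C$ to $D$ stays inside $\Sigma$ by convexity of apartments; since $C$ and $D$ lie in different components of $\Delta\setminus H_0$, this gallery uses some panel $p$ of $H_0$, which then lies in $\Sigma$, so $H_0\subseteq\Sigma$ by (K).

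For part (ii), fix $C_0$ with $u$-face $p_0\subseteq H_0$. The chambers $u$-equivalent to $C_0$ are exactly the chambers having $p_0$ as a face, and deleting $H_0$ deletes $p_0$; so (ii) reduces to showing \emph{(a)} two distinct chambers with face $p_0$ lie in distinct components of $\Delta\setminus H_0$, and \emph{(b)} every component contains such a chamber. For (b): pick a component $\Delta_i$, a chamber $D\in\Delta_i$, any chamber $E$ with face $p_0$, and (Remark \ref{rem: existence of apartment}) an apartment $\Sigma\ni D,E$; then $p_0\subseteq E\subseteq\Sigma$, so by (K) the wall of $\Sigma$ through $p_0$ is $H_0$ and $H_0\subseteq\Sigma$. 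Consequently $\Sigma\setminus H_0$ has exactly two components, the two roots of $\Sigma$ bounded by $H_0$, which contain respectively the two chambers of $\Sigma$ having face $p_0$; as $D$ lies in one of these roots it is joined inside $\Sigma\setminus H_0$ to one of those two chambers, which therefore lies in $\Delta_i$. Statement (a) is where the standing hypothesis \eqref{eq: assump on buildings} — that $H_0$ genuinely disconnects $\Delta$ — is used in an essential rather than formal way: I would prove it by retracting a hypothetical $H_0$-avoiding gallery between two distinct chambers through $p_0$ onto an apartment $\Sigma_0\supseteq H_0$, centred at a chamber of $\Sigma_0$ adjacent to $H_0$, and deriving a contradiction from the fact that its image is a gallery in $\Sigma_0$ joining two chambers separated by the wall $H_0$ yet crossing $H_0$ an even number of times.

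It remains to prove (K), which I expect to be the main obstacle. One takes an apartment $\Sigma_0$ with $H_0\subseteq\Sigma_0$ (such exists since by definition $H_0=\phi_0(H_r)$, and $\Sigma_0=\phi_0(\Sigma(W,S))$ works), and, given an apartment $\Sigma$ with $p\subseteq\Sigma$, a chamber $E\in\Sigma$ with face $p$; using \cite[Theorem 5.73]{abramenko2008buildings} to extend the isometric subset $H_0\cup\{E\}$ one may take $\Sigma_0$ to contain $E$ as well, so $E\in\Sigma\cap\Sigma_0$. By \cite[Corollary 5.68]{abramenko2008buildings} there is a surjective isometry $\eta\colon\Sigma\to\Sigma_0$ fixing $\Sigma\cap\Sigma_0$ pointwise; in particular $\eta$ fixes $p$, so it carries the wall $M$ of $\Sigma$ through $p$ onto the wall $H_0$ of $\Sigma_0$ through $p$. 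The delicate point — and, I expect, the real difficulty, since in a general thick building a wall is \emph{not} determined by a single one of its panels — is to improve $\eta(M)=H_0$ to $M=H_0$, i.e.\ to $H_0\subseteq\Sigma$: I would do this by showing that one of the two roots of $\Sigma$ bounded by $M$ already lies in $\Sigma\cap\Sigma_0$ and hence is fixed by $\eta$, which forces $M$ to be fixed by $\eta$ as well, whence $M=\eta(M)=H_0$; the containment of that root in $\Sigma_0$ is exactly the point at which \eqref{eq: assump on buildings} — and the retraction onto $\Sigma_0$, which is the identity on the $\Delta\setminus H_0$-component of $E$ — must be brought in. The uniqueness clause in (K) is then Remark \ref{rem; unique-wall} inside $\Sigma$, and all remaining steps are routine bookkeeping with minimal galleries, the metric $d=l\circ\delta$, and the description of roots in Lemma \ref{lem;root in building}.
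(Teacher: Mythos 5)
Your architecture is genuinely different from the paper's: you funnel everything through a single locality property (K) for \emph{every} panel of $H_0$, whereas the paper takes (i) as essentially immediate from \eqref{eq: assump on buildings} and then proves (ii) and (iii) by contradiction, each time using \cite[Proposition 3.3]{abramenko2010intersections} to extend a root of the putative ``wrong'' wall $H'\neq H_0$ by a chamber of the $p_0$-residue lying in a \emph{different} connected component, thereby manufacturing an apartment that meets two components of $\Delta\setminus H_0$ yet does not contain $H_0$ --- contradicting (i). That mechanism is exactly what replaces your property (K), and it only ever needs to be run at the single panel $p_0$.

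The problem is that (K) carries all the weight in your write-up and its proof has a genuine gap at the decisive step. You correctly observe that the isometry $\eta\colon\Sigma\to\Sigma_0$ fixing $\Sigma\cap\Sigma_0$ gives $\eta(M)=H_0$, and that the real issue is upgrading this to $M=H_0$ as subsets of $\Delta$; but your proposed upgrade --- ``one of the two roots of $\Sigma$ bounded by $M$ already lies in $\Sigma\cap\Sigma_0$'' --- is not proved and is false in general. Already for a thick building of type $D_\infty$ (a thick tree), two apartments sharing a chamber $E$ adjacent to the wall are two bi-infinite geodesics through the edge $E$, and neither half-line of $\Sigma$ at $x_0$ need be contained in $\Sigma_0$; likewise the retraction onto $\Sigma_0$ centred near $H_0$ is certainly not the identity on an entire connected component of $\Delta\setminus H_0$ unless that component sits inside $\Sigma_0$, which fails in any thick example. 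So the one step you flag as ``the real difficulty'' is exactly the step your strategy cannot deliver. (A secondary issue: (K) quantifies over all panels of $H_0$, which would additionally require an analogue of (ii) at every such panel --- i.e.\ that each panel's residue meets at least two components --- and that is not established either.) To repair the argument you should abandon the $\Sigma$-versus-$\Sigma_0$ comparison and run the paper's contradiction instead: if the wall $M$ of $\Sigma$ through $p_0$ differed from $H_0$, pick a root $\alpha$ of $\Sigma$ bounded by $M$ and a chamber of the $p_0$-residue in a component not meeting $\alpha$, extend $\alpha$ together with that chamber to an apartment by \cite[Proposition 3.3]{abramenko2010intersections}, and contradict part (i) via Remark \ref{rem; unique-wall}. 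Your existence argument for (ii) (two roots of an apartment containing $H_0$, each gallery-connected) is sound once (iii) is in place, and in fact fills in a point the paper leaves implicit.
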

    
\begin{proof}
    (i) This is obvious from the assumption \eqref{eq: assump on buildings} on $(\Delta, \delta)$.
 \medskip
 
    (ii) Suppose that there are two distinct chambers $u$-equivalent to $C_0$ in  the connected component $\Delta_i$ for some $i\in \{0,1,,\ldots, n-1\}$, and we denote them by $u_{(i)}C_0$ and $u_{(i)}'C_0$. Since these two chambers are $u$-adjacent, there exists an apartment containing both of them and their common $u$-face lies in a wall $H\ne H_0$. Now consider a root $\alpha$ with wall $H$ in this apartment. Without loss of generality, we assume $\alpha$ contains $u_{(i)}C_0$. Take another chamber in the connected component $\Delta_{j}$ with $i\ne j$ that is $u$-equivalent to $C_0$, and denote it by $u_{(j)}C_0$. It follows from \cite[Proposition 3.3]{abramenko2010intersections} that there is an apartment $\Sigma_{i,j}$ containing $\alpha\cup u_{(j)}C_0$. By the uniqueness of the wall which contains a fixed $u$-face in apartments (see Remark \ref{rem; unique-wall}), we have that $\Sigma_{i,j}$ does not contain $H_0$, which contradicts part (i). 

\medskip
    (iii) Let $\mathcal{F}$ be the set of all chambers which share the same $u$-face with $C_0$. It follows from part (ii) that $\mathcal{F}$ has exactly $n$ chambers. Then any apartment containing the $u$-face of $C_0$ must contain a chamber in $\mathcal{F}$. 
    Suppose that there is an apartment containing $C\in \mathcal{F}$ such that the $u$-face of $C$ (the same as the $u$-face of $C_0$) lies in a wall $H'\ne H_0$. Take the root $\alpha'$ that contains $C$ and the wall $H'$. Consider a connected component $\Delta_{i_0}$ such that $C\notin \Delta_{i_0}$, and take $u_{(i_0)}C_0$ to be the chamber in $\mathcal{F}\cap \Delta_{i_0}$.
    Applying \cite[Proposition 3.3]{abramenko2010intersections} again, there is an apartment $\Sigma_{i_0}'$ containing $\alpha'\cup u_{(i_0)}C_0$. By the uniqueness of the wall which contains a fixed $u$-face in apartments again, we have that $\Sigma_{i_0}'$ does not contain $H_0$, which contradicts part (i). 
\end{proof}

From now on, we will assume that the building $(\Delta, \delta)$ of type $(W,S)$  satisfies the nested condition, that is, $W$ satisfies \eqref{nested-condition} relative to a certain generator, say $u\in S$, and it also satisfies \eqref{eq: assump on buildings}. 
Let $G$ be a group acting on $(\Delta, \delta)$ isometrically. Fix a chamber $C_0$ whose $u$-face lies in the wall $H_0$. Then  we define a multiplier $m$ on $G$ as follows. 

\begin{defn} \label{defn; symbol-tree}
Let $G$, $(\Delta, \delta)$ and $C_0$ be defined as above, 
and let $G_0 := \{g\in G: \delta(gC_0, C_0)\in W_T\}$, where $T:= \{s\in S: su=us\}$. 
Let $\widetilde{m}:\Delta \rightarrow \mathbb{C}$ be a bounded function such that 
\begin{enumerate}
    \item[(i)] $\widetilde{m}$ restricted to $\Delta \setminus H_0$, is constant on each connected component. 
    \item[(ii)] $\widetilde{m}|_{\Delta_i} = \widetilde{m}|_{\Delta_j}$ for $i,j\in \{0,1,\ldots,n-1\}$, whenever  there is $g\in G_0$ such that $g\cdot \Delta_i = \Delta_j$. 
\end{enumerate}
Then we define the  multiplier $m$ on $G$ as 
$$
m(g)=\widetilde{m}(gC_0), \; \forall g\in G.
$$
\end{defn}
Note that $G_0$ is a subgroup of $G$ by a similar argument as in the proof of \ref{theoremC} (i). In addition, $G_0$ is the stabilizer of the wall $H_0$, i.e. $G_0 = {\rm Stab}_{ H_0}$. To see this, note that $W_T = {\rm Stab}_{ H_u}$ which can be deduced from the assumption that $W$ satisfies the nested condition relative to $u$ and Proposition \ref{prop; equi-char-Nest-algebraic-forms}. For any $g\in G$, there is an apartment $\Sigma_g = \phi_g(\Sigma(W,S))$ containing $gC_0$, $C_0$ such that $\phi_g(K) = C_0$. It follows from Proposition \ref{prop: properties of our special building} (iii) that $H_0\in \Sigma_g$ and then $H_0 = \phi_g(H_u)$. Suppose that $\phi_g(wK) = gC_0$ for some $w\in W$. Then $g\in G_0$ if and only if $w\in W_T = {\rm Stab}_{ H_u}$ if and only if $\phi_g(w\cdot H_u) = H_0$. Since the $u$-face of $gC_0$ lies in both $\phi_g(w\cdot H_u) = H_0$ and $g\cdot H_0$, Proposition \ref{prop: properties of our special building} (iii) implies that $g\cdot H_0 = H_0$ for $g\in G_0$.

Now we show the multiplier defined above also admits an algebraic form in terms of the distance between chambers in the building. 
By  Proposition \ref{prop: properties of our special building} (ii), in each component of $\Delta \setminus H_0$ there is a unique chamber $u$-equivalent to $C_0$, 
we denote these chambers by $u_{(i)}C_0$ for $i\in \{0, 1, \ldots, n-1\}$  and identify $C_0$ with $u_{(0)}C_0$. 

Let $D\in \Delta$. If $D$ and $C_0$ are in the same connected component, by Proposition \ref{prop: properties of our special building} (iii) we see that there exists an apartment $\Sigma$ containing $D$, $C_0$ and $H_0$. 
Now we show that $D$ and $C_0$ have to be in the same root with wall $H_0$ in this apartment. Suppose that they are in different roots with wall $H_0$. Consider the chamber $u_{\Sigma}C_0$ which is $u$-adjacent to $C_0$ and that lies in the same root as $D$. 
Then we obtain two distinct chambers $C_0$ and $u_{\Sigma}C_0$ in the same component, which contradicts Proposition \ref{prop: properties of our special building} (ii). 
Therefore $D$ and $C_0$ are in the same root with the wall $H_0$ in $\Sigma$. 
 Choose $\phi$ such that $\Sigma = \phi(\Sigma(W,S))$ and $\phi(K) = C_0$, then the root $\alpha=\phi(H_u^+)$ contains $D, C_0$ and $H_0$ (recall that the root $H_u^+$ in the Coxeter complex contains $K$). There exists an apartment $\Sigma'_i = \phi_i'(\Sigma(W,S))$ with $\phi_i'(K) = C_0$ containing $\alpha \cup \{u_{(i)}C_0\}$ by \cite[Proposition 3.3]{abramenko2010intersections}. Therefore, we have 
$$d(D,C_0)< d(D, u_{(i)}C_0), \; \forall \; 1\leq i\leq n-1.$$
On the other hand, if $D$ is in the same connected component  as some $u_{(i)}C_0$ with $1\leq i\leq n-1$. By a similar argument as above, there exists a root $\alpha_i = \phi_{i}(H_u^+)$ with $\phi_i(K) = u_{(i)}C_0$ such that it contains $D$ and $u_{(i)}C_0$, and then
$$d(D, u_{(i)}C_0)< d(D, C_0).$$

By the discussion above,  the multiplier $m$ in Definition \ref{defn; symbol-tree} can be reformulated as the following. For any $i\in \{0, 1,\ldots, n-1\}$, denote the value of $\widetilde{m}$ on $\Delta_i$ by $\mathbf{C}_i\in \mathbb{C}$, and let $u_{(i)}C_0$ be the chambers chosen as above. For any $g\in G$,
        \begin{equation*}
		m(g) = \left\{\begin{array}{l}
			\mathbf{C}_0, \quad \text{if}\; d(gC_0, C_0) < d(gC_0, u_{(1)}C_0)\; \\
			\mathbf{C}_i, \quad \text{if} \; d(gC_0, C_0)> d(gC_0, u_{(i)}C_0) \;\;\text{ for } 1\leq i\leq n-1.
		\end{array} \right.
		\end{equation*}


Before showing the multiplier $m$ in Definition \ref{defn; symbol-tree} satisfies the Cotlar identity, we will first show a lemma which tells us that in Definition \ref{defn; symbol-tree} the fixed chamber $C_0$ can be replaced by any chamber that is $u$-adjacent to $C_0$ without changing the value of $m(g)$ for any $g\notin G_0$. 
\begin{lem}\label{lem: same cc}
Let $G$, $G_0$, $(\Delta, \delta)$, $C_0$ and $H_0$ be defined as above. For any $j\in \{0,1,\ldots, n-1\}$, let $u_{(j)}C_0$ be the chamber $u$-adjacent to $C_0$ which lies in the $j$-th connected component $\Delta_j$ of $\Delta\setminus H_0$. Then 
$gu_{(j)}C_0$ and $gC_0$ are always in the same connected component  of $\Delta\setminus H_0$ for any $g\notin G_0$. 
\end{lem}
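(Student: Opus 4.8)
The plan is to reduce the statement to a claim about the Weyl distances $\delta(gC_0, C_0)$ and $\delta(gC_0, gu_{(j)}C_0)$, and then to use the characterization of connected components of $\Delta \setminus H_0$ in terms of these distances that was extracted right before the lemma. First I would recall that, since $G$ acts isometrically, $\delta(gC_0, gu_{(j)}C_0) = \delta(C_0, u_{(j)}C_0) = u$, so $gu_{(j)}C_0$ is $u$-adjacent to $gC_0$, and both chambers are $u$-equivalent to $gC_0$. Fix $g \notin G_0$. Since $g \notin G_0 = {\rm Stab}_{H_0}$, we have $g \cdot H_0 \neq H_0$. I would then locate which component of $\Delta \setminus H_0$ contains $gC_0$: by Proposition \ref{prop: properties of our special building}(ii), the $u$-face of $gC_0$ is the $u$-face of exactly one chamber $u_{(i_0)}C_0$ among the $n$ chambers sharing the $u$-face with $C_0$ (for suitable $i_0$, or $C_0$ itself when $i_0 = 0$), and the characterization before the lemma says that $gC_0$ lies in $\Delta_{i_0}$ precisely when $d(gC_0, u_{(i_0)}C_0) < d(gC_0, u_{(i)}C_0)$ for all $i \neq i_0$ — equivalently, by the algebraic reformulation, in terms of whether $\delta(C_0, gC_0)$ starts with $u$ or not.

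The key geometric point I would isolate is this: $gC_0$ and $gu_{(j)}C_0$ are two $u$-adjacent chambers, so there is a unique wall $H$ separating them, namely the wall containing their common $u$-face; and they lie in the same component of $\Delta \setminus H_0$ if and only if $H \neq H_0$, i.e. if and only if their common $u$-face does not lie in $H_0$. So the lemma is equivalent to: \emph{the $u$-face of $gC_0$ does not lie in $H_0$, whenever $g \notin G_0$.} But $g C_0$ has the same $u$-face as $g u_{(0)}C_0 = gC_0$, and the $u$-face of $gC_0$ is $g$ applied to the $u$-face of $C_0$, which lies in $g \cdot H_0$. Hence I would argue: the $u$-face of $gC_0$ lies in $g \cdot H_0$; if it also lay in $H_0$, then by Proposition \ref{prop: properties of our special building}(iii) (the wall through a given $u$-face inside an apartment is unique, applied inside an apartment containing both $gC_0$ and $C_0$, which exists by Remark \ref{rem: existence of apartment} and contains $H_0$ by part (iii)), we would get $g \cdot H_0 = H_0$, contradicting $g \notin G_0 = {\rm Stab}_{H_0}$. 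Therefore the $u$-face of $gC_0$ does not lie in $H_0$, the separating wall $H$ of $gC_0$ and $gu_{(j)}C_0$ is not $H_0$, and the two chambers lie in the same component of $\Delta \setminus H_0$.

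The main obstacle — and the step I would spend the most care on — is justifying that ``two $u$-adjacent chambers lie in the same component of $\Delta \setminus H_0$ iff their common $u$-face is not in $H_0$''. One direction is immediate: if the $u$-face lies in $H_0$ then removing $H_0$ separates the two chambers. For the converse I would fix an apartment $\Sigma$ containing both $gC_0$ and $gu_{(j)}C_0$ (they are at Weyl distance $u$, so such $\Sigma$ exists), which by Proposition \ref{prop: properties of our special building}(iii) also contains $H_0$; inside $\Sigma \cong \Sigma(W,S)$ the walls are honest separating hyperplanes, so if the $u$-face is not in $H_0$ then both chambers lie on the same side of $H_0$ in $\Sigma$, and a path in $\Sigma$ avoiding $H_0$ connects them, witnessing that they are in the same component of $\Delta \setminus H_0$. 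A minor bookkeeping point to handle is the edge case $j = 0$, where $u_{(0)}C_0 = C_0$ and the statement is vacuous; and I should note that the index $j$ in the statement labels the component of $\Delta \setminus H_0$ containing $u_{(j)}C_0$, but this labelling plays no role in the proof beyond ensuring $u_{(j)}C_0$ is well-defined via Proposition \ref{prop: properties of our special building}(ii). With these pieces in place the argument is short; essentially everything reduces to the uniqueness-of-walls statements in Remark \ref{rem; unique-wall} and Proposition \ref{prop: properties of our special building}(iii), together with the fact that $G_0 = {\rm Stab}_{H_0}$ established just above the lemma.
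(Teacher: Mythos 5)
Your overall strategy --- reduce the lemma to showing that the common $u$-face of $gC_0$ and $gu_{(j)}C_0$ does not lie in $H_0$, and derive this from $g\cdot H_0\ne H_0$ --- is a more direct route than the paper's, which passes through an auxiliary chamber $u_{(i)}C_0$ lying in the same component as $gC_0$ and uses \cite[Proposition 3.3]{abramenko2010intersections} to build an apartment containing a whole root of $H_0$ together with an extra chamber. But your pivotal step has a genuine gap. You argue: the $u$-face of $gC_0$ lies in $g\cdot H_0$; if it also lay in $H_0$, then uniqueness of the wall through a panel, applied inside an apartment $\Sigma$ containing $gC_0$ and $C_0$, would force $g\cdot H_0=H_0$. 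The uniqueness statement (Remark \ref{rem; unique-wall}) compares walls \emph{of a single apartment}. Your $\Sigma$ contains $H_0$ by Proposition \ref{prop: properties of our special building}(iii), but nothing you have said places $g\cdot H_0$ inside $\Sigma$. In a thick building a panel is contained in many distinct walls (different apartments through the panel generally yield different walls), so ``the $u$-face of $gC_0$ lies in both $H_0$ and $g\cdot H_0$'' does not by itself give $H_0=g\cdot H_0$. This is exactly the difficulty the paper's proof is organized around: it manufactures, via the root-extension result, one apartment that demonstrably contains \emph{both} walls before invoking uniqueness.

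The gap is repairable with one observation you did not make: since $g$ acts by isometries, the pair $(gC_0,\, g\cdot H_0)$ satisfies the same hypotheses as $(C_0, H_0)$ --- the complement $\Delta\setminus g\cdot H_0 = g\cdot(\Delta\setminus H_0)$ still has $n$ connected components and the $u$-face of $gC_0$ lies in $g\cdot H_0$ --- so Proposition \ref{prop: properties of our special building}(iii) applies to it and shows that $g\cdot H_0$ is the wall, in \emph{any} apartment containing the $u$-face of $gC_0$, through that face; in particular $g\cdot H_0\subseteq\Sigma$, and only then does the within-apartment comparison with $H_0$ become legitimate. A secondary slip: you assert that an apartment containing $gC_0$ and $gu_{(j)}C_0$ contains $H_0$ ``by part (iii)''; part (iii) only covers apartments containing the $u$-face of $C_0$, not that of $gC_0$. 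Fortunately the direction of your equivalence that the lemma actually needs ($u$-face not in $H_0$ implies same component) does not require $H_0$ to lie in that apartment, since the two chambers are then joined through the interior of their common panel inside $\Delta\setminus H_0$.
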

\begin{proof}

For a given $g$, there exists some $i\in \{0,1,\ldots, n-1\}$ such that $gC_0$ and $u_{(i)}C_0$ are in the same component. By the discussion before this lemma, there is a root $\beta$ containing $gC_0$ and $u_{(i)}C_0$ with wall $H_0$. Since $u_{(j)}C_0$ and $u_{(i)}C_0$ are $u$-adjacent and their common $u$-face lies in $H_0$, by \cite[Proposition 3.3]{abramenko2010intersections}, there is an apartment $\Sigma_1$ containing $\beta\cup u_{(j)}C_0$.  Consider $g^{-1}$ acting on $\Sigma_1$, then the apartment $g^{-1}\cdot \Sigma_1$ contains $C_0$, $g^{-1}u_{(i)}C_0$, $g^{-1}u_{(j)}C_0$ and the wall $g^{-1}\cdot H_0$. 
Since the group action keeps the $u$-adjacency and 
maps walls to walls, 
$g^{-1}u_{(i)}C_0$ and $g^{-1}u_{(j)}C_0$ are also $u$-adjacent and their common $u$-face lies in $g^{-1}\cdot H_0$. By Proposition \ref{prop: properties of our special building} (iii), the $u$-face of $C_0$ only lies in the wall $H_0$, thus $H_0$ is also in the apartment $g^{-1}\cdot \Sigma_1$. Since $g\notin G_0={\rm Stab}_{H_0}$, we have $g^{-1}\cdot H_0\ne H_0$. Applying the uniqueness of the wall containing a fixed $u$-face in apartments, we have that the common $u$-face of $g^{-1}u_{(i)}C_0$ and $g^{-1}u_{(j)}C_0$ does not lie in $H_0$, then these two chambers must lie in the same connected component of $\Delta\setminus H_0$. 
If $i=0$, we get that $g^{-1}u_{(j)}C_0$ and $g^{-1}C_0$ are in the same component. 
If $i\ne 0$, then $C_0\notin \beta$. In this case, we consider the apartment $\Sigma_2$ containing $\beta\cup C_0$. 
A similar argument as for $\Sigma_1$ shows that $g^{-1}u_{(i)}C_0$ and $g^{-1}C_0$ are in the same component, which implies that $g^{-1}u_{(j)}C_0$ and $g^{-1}C_0$ are in the same component.    
\end{proof}

For any $g\in G$ with $\delta(gC_0,C_0) = w$,  from the discussion below Definition \ref{defn; symbol-tree}, we have that $g\in G_0\Longleftrightarrow w\in W_T\Longleftrightarrow w\cdot H_u^+\in \{H_u^+, H_u^-\}$. Therefore, it follows from (B2) that $\delta(gC_0,u_{(i)}C_0)\cdot H_u^+ \in \{H_u^+, H_u^-\}$ for any $0\le i\le n-1$.
\begin{thm}\label{thm; generating-trees}
	Let $(\Delta, \delta)$ be a building of type $(W,S)$ where $W$ satisfies \eqref{nested-condition} relative to $u$. Let $C_0$, $G$, $G_0$ and $m$ be the same as above. 
    Then the following statements hold:
    \begin{enumerate}
        \item[(i)]  $m$ satisfies \eqref{eq: Cotlar}, i.e.
	$$(m(g)-m(h))(m(g^{-1}h)-m(g^{-1})) = 0, \text{ for any }g\in G\setminus G_0 \text{ and } h\in G.$$

    \item[(ii)] The multiplier $m$ is left $G_0$-invariant, and then  $T_m$ gives an $L_p$-bounded Fourier multiplier on $L_p(\mathcal{L} G)$ for $1< p< \infty$.
    \end{enumerate}
\end{thm}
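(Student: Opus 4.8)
The plan is to mimic the structure of the proof of \ref{theoremC}, replacing the two-valued multiplier with the finer multiplier $m$ from Definition \ref{defn; symbol-tree} and the two-root dichotomy in a fixed apartment with an $n$-fold ``which connected component of $\Delta\setminus H_0$'' dichotomy. First I would fix $g\in G\setminus G_0$ and $h\in G$, and assume $m(g)\neq m(h)$, so by Definition \ref{defn; symbol-tree}(i) the chambers $gC_0$ and $hC_0$ lie in different connected components, say $gC_0\in g\cdot\Delta_0$ and ... more precisely: there are indices recording which components $gC_0$ and $hC_0$ sit in. The goal is to show $m(g^{-1}h)=m(g^{-1})$, i.e. that $g^{-1}hC_0$ and $g^{-1}C_0$ lie in the same connected component of $\Delta\setminus H_0$ (or that both $g$-translated data force equality of $\widetilde m$ values via Definition \ref{defn; symbol-tree}(ii)).

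The key geometric input is Lemma \ref{lem: same cc} together with Proposition \ref{prop: properties of our special building}. I would first apply $g^{-1}$ to the configuration: since $g\notin G_0={\rm Stab}_{H_0}$, we have $g^{-1}\cdot H_0\neq H_0$, and the wall $g^{-1}\cdot H_0$ meets the apartment through $C_0$ and $g^{-1}C_0$ only in the way controlled by Proposition \ref{prop: properties of our special building}(iii) — the $u$-face of $C_0$ lies in $H_0$ alone. The point is that applying $g^{-1}$ pushes $H_0$ off of $C_0$, so relative to the wall $H_0$ the chambers $g^{-1}C_0$, $g^{-1}hC_0$ and all the $g^{-1}u_{(i)}C_0$ end up on one side. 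Concretely, I would translate the ``nested'' inclusions: $gC_0$ and $hC_0$ being in distinct components means, in terms of the apartment through $gC_0,hC_0$ and $H_0$ (which exists by Proposition \ref{prop: properties of our special building}(i) since they are in different components), that $\delta(gC_0,hC_0)=w_2$ with $w_2\notin W_T$ and $w_2\cdot H_u^{\pm}\subsetneq H_u^{\mp}$ or similar; then applying $g^{-1}$ and using the isometry of the action, $\delta(C_0,g^{-1}hC_0)=w_2$ still, while $\delta(C_0,g^{-1}C_0)=w^{-1}$ where $w=\delta(C_0,gC_0)\notin W_T$. Via Proposition \ref{prop;nested-condition} and Proposition \ref{prop; equi-char-Nest-algebraic-forms} (transported to $\Delta$ through retractions as in the proof of \ref{theoremC}), $w^{-1}\cdot H_u^+\subsetneq H_u^-$ forces $g^{-1}C_0$ into the component $\Delta_{i_0}$ for a specific $i_0$; the same analysis applied to $w_2$ together with the ``transitivity'' of Proposition \ref{pro;building-transitivity} forces $g^{-1}hC_0$ into the \emph{same} component $\Delta_{i_0}$. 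Hence $\widetilde m(g^{-1}C_0)=\widetilde m(g^{-1}hC_0)$, i.e. $m(g^{-1})=m(g^{-1}h)$. The symmetric case $m(g)=-1$-type and $m(h)=+1$-type (more precisely, the roles of the components swapped) is handled identically, so \eqref{eq: Cotlar} follows.

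For part (ii), I would argue exactly as in the proof of \ref{theoremC}(ii): take $g\in G_0$ with $\delta(C_0,gC_0)=w_0\in W_T$ and $h\in G$. By (B2) applied along a reduced expression of $w_0$ (all letters in $T$, hence commuting with $u$), $\delta(C_0,ghC_0)$ differs from $\delta(C_0,hC_0)=w_1$ only by left-multiplication by a subword of $w_0$, and since every such letter commutes with $u$, this subword does not change the ``side of $H_u$'' data, hence does not change which connected component the chamber lies in. More conceptually, $G_0={\rm Stab}_{H_0}$, so $g\in G_0$ permutes the connected components $\Delta_i$, and by Definition \ref{defn; symbol-tree}(ii) the function $\widetilde m$ is constant on $G_0$-orbits of components; since $ghC_0$ lies in $g\cdot(\text{component of }hC_0)$, we get $\widetilde m(ghC_0)=\widetilde m(hC_0)$, i.e. $m(gh)=m(h)$. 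Then \ref{prop; Lp-bounded-introduction} (Theorem CI) applied with this $G_0$ yields the $L_p$-boundedness of $T_m$ for $1<p<\infty$.

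The main obstacle I anticipate is the bookkeeping in the first part: translating the ``different connected components'' hypothesis into a precise statement about Weyl distances and roots inside a well-chosen apartment, and then running Proposition \ref{pro;building-transitivity} to land $g^{-1}hC_0$ in exactly the same component as $g^{-1}C_0$ rather than merely ``on the same side of $H_0$ as seen from some apartment.'' The subtlety is that connected components of $\Delta\setminus H_0$ are a global notion while roots are apartment-local, so I must use Proposition \ref{prop: properties of our special building}(i)–(iii) (uniqueness of the wall through the $u$-face of $C_0$, unique $u$-equivalent chamber per component) together with \cite[Proposition 3.3]{abramenko2010intersections} to pass between local root inclusions and global component membership — this is precisely the work already done in Lemma \ref{lem: same cc}, which I expect to invoke as a black box, but verifying that its hypothesis $g\notin G_0$ is available throughout and that it applies to the $g^{-1}$-translated configuration is where care is needed.
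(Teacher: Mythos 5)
The proposal for part (i) has a genuine gap at its central step. You claim that the root inclusion $w^{-1}\cdot H_u^+\subsetneq H_u^-$ (obtained from $\delta(C_0,g^{-1}C_0)=w^{-1}$ via Propositions \ref{prop;nested-condition} and \ref{prop; equi-char-Nest-algebraic-forms}) ``forces $g^{-1}C_0$ into the component $\Delta_{i_0}$ for a specific $i_0$'', and that the same analysis plus Proposition \ref{pro;building-transitivity} puts $g^{-1}hC_0$ into the same $\Delta_{i_0}$. This cannot work: Weyl distances from $C_0$ and root inclusions inside apartments through $C_0$ only see the two roots bounded by $H_0$, so they can only distinguish ``in the component of $C_0$'' from ``not in the component of $C_0$''; they cannot single out one of the $n-1$ components on the far side of $H_0$. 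Concretely, all the chambers $u_{(1)}C_0,\dots,u_{(n-1)}C_0$ satisfy $\delta(C_0,u_{(i)}C_0)=u$ yet lie in pairwise distinct components, so two chambers whose Weyl distances from $C_0$ both ``start with $u$'' need not share a component, and Proposition \ref{pro;building-transitivity} (a statement about root inclusions inside apartments) cannot repair this. You correctly flag this local-versus-global issue as ``the main obstacle'' and point at Lemma \ref{lem: same cc}, but the argument as written never actually deploys it in a way that closes the gap.

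The missing idea --- and the route the paper takes --- is to use the auxiliary chamber $u_{(j)}C_0$ (the unique chamber of the component $\Delta_j$ containing $hC_0$ that is $u$-adjacent to $C_0$, by Proposition \ref{prop: properties of our special building}(ii)) as an intermediary. One takes an apartment $\Sigma$ through $gC_0$ and $hC_0$ (which contains $H_0$, hence $u_{(j)}C_0$), anchors the isometry at $\phi(K)=u_{(j)}C_0$ rather than at $C_0$, and uses the nested condition only to conclude that $hC_0$ and $u_{(j)}C_0$ lie in a common root of $\Sigma$ bounded by the wall $g\cdot H_0$. Applying $g^{-1}$ then places $g^{-1}hC_0$ and $g^{-1}u_{(j)}C_0$ in a common root bounded by $H_0$, hence in the same connected component; Lemma \ref{lem: same cc} (applied to $g^{-1}\notin G_0$) identifies that component with the one containing $g^{-1}C_0$. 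Your part (ii) is essentially right, though the assertion that each $g\in G_0$ maps a whole component $\Delta_i$ onto a single component $\Delta_j$ (which is what licenses the use of Definition \ref{defn; symbol-tree}(ii)) is itself a claim the paper proves by a separating-wall contradiction argument rather than something that follows directly from (B2) or from the commuting-letters computation.
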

\begin{proof}
	(i)  Suppose that $m(g) \ne m(h)$. 
    Assume that $m(g) = \mathbf{C}_{i}$ and $m(h) = \mathbf{C}_j$ with  $i,j\in \{0,1,\ldots,n-1\}$ and $i\ne j$. There is an apartment $\Sigma = \phi(\Sigma(W,S))\in \Delta$ such that $gC_0,hC_0\in \Sigma$. Since $gC_0$ and $hC_0$ are in different connected components, the apartment $\Sigma$ must contain the wall $H_0$ by Proposition \ref{prop: properties of our special building} (i). Since the chamber $u_{(j)}C_0$ is the only chamber in $\Delta_j$, whose $u$-face lies in $H_0$, we get $u_{(j)}C_0$ is also in $\Sigma$. Moreover, $hC_0$ and $u_{(j)}C_0$ are in the same root with wall $H_0$ in $\Sigma$ by the discussion before Lemma \ref{lem: same cc}. Choose $\phi$ such that $\phi(K) = u_{(j)}C_0$. Then $H_0 = \phi(H_u)$, $hC_0$ and $u_{(j)}C_0$ are in the root $\phi(H_u^+)$, and $gC_0$ is in $\phi(H_u^-)$.
   Suppose that $\phi(wK) = gC_0$ for some $w\in W$, then $wK\in H_u^-$. Since $W$ satisfies \eqref{nested-condition} relative to $u$, Proposition \ref{prop;nested-condition}, the assumption $g\notin G_0$ ($w\notin W_T$) and the above argument before this theorem ensure that
    $$H_u^+\subsetneq w\cdot H_u^+\quad \text{or}\quad H_u^+\subsetneq w\cdot H_u^-.$$
    Then we have the following inclusion relation between the roots:
    \begin{equation*}\label{relation;proof-trees}
        \phi(H_u^+)\subsetneq \phi(w\cdot H_u^+)\quad \text{or}\quad \phi(H_u^+)\subsetneq \phi(w\cdot H_u^-),
    \end{equation*}
    which implies that $hC_0$ and $u_{(j)}C_0$ are in the same root with wall $\phi(w\cdot H_u)=g\cdot H_0$, and note that the $u$-face of $gC_0$ is in this wall.
Consider the action of $g^{-1}$ on $\Sigma$. 
Since the group action is an isometry, 
we have that $g^{-1}hC_0$ and $g^{-1}u_{(j)}C_0$ are in the same root with the wall $H_0$. Thus $g^{-1}hC_0$ and $g^{-1}u_{(j)}C_0$ are in the same connected component of  $\Delta\setminus H_0$. Since by Lemma \ref{lem: same cc}, $g^{-1}u_{(j)}C_0$ and $g^{-1}C_0$ are  in the same connected component for any $g\notin G_0$,
 we deduce that $g^{-1}C_0$ and $g^{-1}hC_0$ are in the same connected component of $\Delta\setminus H_0$, which implies $m(g^{-1}) = m(g^{-1}h)$.

\medskip

    (ii) We first claim that the second assumption of $\widetilde{m}$ in Definition \ref{defn; symbol-tree} implies that $\widetilde{m}$ is invariant under the action of $G_0$ on $\Delta \setminus H_0$ i.e. if there is $g\in G_0$ and a chamber $D\in \Delta_i$ such that $gD\in \Delta_j$ for $i,j\in \{0,\ldots,n-1\}$, then for any chamber $E\in\Delta_i$, $gE\in \Delta_j$. Then it follows directly from the claim that $m$ is left $G_0$-invariant, i.e. for any $g\in G_0$ and $h\in G$, one has $m(gh) = m(h)$. Now we prove the claim. Assume on the contrary that there are $D,E\in \Delta_i$ and $g\in G_0$ such that $gD\in \Delta_j$ but $gE\notin \Delta_j$. Let $\Sigma'$ be an apartment containing $D$ and $E$ and let $\mathcal{U} = \{H\in \Sigma': H \text{ is a wall separating } D \text{ and } E\}$. Since $D$ and $E$ are in the same component $\Delta_i$, we know from the discussion before Lemma \ref{lem: same cc} that $H_0\notin \mathcal{U}$.
Consider $g$ acting on $\Sigma'$, then $g\cdot \Sigma'$ contains chambers $gD$ and $gE$. Since $gE$ and $gD$ are in different components, by Proposition \ref{prop: properties of our special building}  (i) the apartment $g\cdot \Sigma'$ must contain $H_0$ which separates $D$ and $E$. This implies that there is a $H\in \mathcal{U}$ such that $g\cdot H = H_0$. Since $g\in G_0$ ($g$ is in the stabilizer of $H_0$), we have $H_0= H\in \mathcal{U}$, which is a 
contradiction. By applying \ref{prop; Lp-bounded-introduction}, we get that $T_m$ is $L_p$-bounded for any $1< p< \infty$.
\end{proof}

 Now let us consider the case when $S =\{s,t\}$ and $W = D_{\infty} := \langle S:  s^2 = t^2 = 1\rangle$ is the infinite dihedral group which is a right-angled Coxeter group. Any building $(\Delta,\delta)$ of type $(D_{\infty}, S)$ is a tree without valence-1 vertices \cite[Example 6.5]{thomas2018geometric}, and it satisfies assumption \eqref{eq: assump on buildings}. 
 In this case, it is easy to see that the Coxeter complex of $D_{\infty}$ is a bi-infinite tree as follows:
	\begin{center}
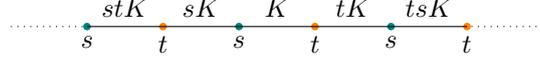

		
		\begin{tikzpicture}
			\coordinate[label = below: $s$] (s) at (0,0);
			\coordinate[label = below: $t$] (t) at (1,0);
			\coordinate[label = below: $t$] (s1) at (-1,0);
			\coordinate[label = below: $s$] (s2) at (-2,0);
			\coordinate[label = below: ] (s3) at (-3,0);
			\coordinate[label = below: $s$] (t1) at (2,0);
			\coordinate[label = below: $t$] (t2) at (3,0);
			\coordinate[label = below: ] (t3) at (4,0);

			\node at (s)[circle, fill = teal, inner sep=1pt]{};
			\node at (t)[circle, fill = orange, inner sep=1pt]{};
			\node at (s1)[circle, fill = orange, inner sep=1pt]{};
			\node at (s2)[circle, fill = teal, inner sep=1pt]{};
			\node at (t1)[circle, fill = teal, inner sep=1pt]{};
			\node at (t2)[circle, fill = orange, inner sep=1pt]{};

			\draw [dotted] (s3)--(s2) node[above, midway]{};
			\draw (s1)--(s2) node[above, midway]{$stK$};
			\draw (s1)--(s) node[above, midway]{$sK$};
			\draw (s)--(t) node[above, midway]{$K$};
			\draw (t1)--(t) node[above, midway]{$tK$};
			\draw (t1)--(t2) node[above, midway]{$tsK$};
			\draw [dotted] (t3)--(t2) node[above, midway]{};
		\end{tikzpicture} 
         \captionof{figure}{The Coxeter complex of  $D_{\infty}$.}
	\end{center}
The fundamental chamber $K$ is an edge with end points labeled by $s$ and $t$. It is clear to see from the Coxeter complex of $D_{\infty}$ that, every chamber in $\Delta$ is an edge and every vertex in chambers is a wall as well as a face. 
 Let $x_0$ be a vertex of valence-$n$ for $n\in \mathbb{N}_{\ge2}\cup \infty$. 
Let $G$ be the group acting on $(\Delta, \delta)$ isometrically. 
  When cutting the building by the wall (vertex) $x_0$, 
  we get $n$ connected components. The subgroup $G_0$ in this case is $\rm{Stab}_{x_0}$. The chamber $C_0$ can be chosen to be any edge with $x_0$ being a vertex of it, and $gC_0$ will always lie in the same connected component as $gx_0$ for any $g\in G\setminus G_0$. 
  Then the multiplier defined in Definition \ref{defn; symbol-tree} recovers the one given in \cite{gonzalez2022non-commutative} through model \eqref{eq: Multiplier on trees} for any groups acting on simplicial trees.


	
	


\subsection{Examples of groups acting on right-angled buildings}\label{subsection3.5}
Buildings which are associated with right-angled Coxeter groups are called right-angled buildings. Therefore, they naturally satisfy the nested condition, and then \ref{theoremC} applies.
In this subsection, we will give some examples of groups which admit non-trivial actions on right-angled  buildings  and discuss the concrete forms of Hilbert transforms on these groups.

 There is a large class of groups called \textit{graph products of groups}, which act on their associated right-angled buildings in a nice way. 


\begin{defn}\label{def;graph product of groups}
	Let $\Gamma = (S, E(\Gamma))$ be a finite simplicial graph with vertex set $S$ and edge set $E(\Gamma)$. Let every vertex $s\in S$ be associated with a non-trivial group $G_s$. The \textit{graph product} of the family $\{G_s: s\in S\}$ is the group $G_{\Gamma} := (\underset{s\in S}{\ast}G_s)/N$, where $N$ is the normal subgroup generated by the set of commutators $\{g_tg_sg_t^{-1}g_s^{-1}: g_t\in G_t, g_s\in G_s\; \text{and}\; \{t,s\}\in E(\Gamma)\}$. 
\end{defn}
Note that two subgroups  $G_s$ and $G_t$
are commutative in $G_{\Gamma}$ if and only if the corresponding vertices $s$ and $t$ are adjacent in $\Gamma$. Graph products of groups are closely related to Coxeter groups. 
We recall that Coxeter groups can be defined by their \textit{presenting graphs}, which are finite simplicial graphs $\Gamma = (S, E(\Gamma))$  with vertex set $S$ and edge set $E(\Gamma)$ such that every edge $\{s,t\}\in E(\Gamma)$ is labeled by an integer $\mathrm{m}_{st}\ge 2$.  When $\mathrm{m}_{st} = 2$, we usually omit the label. 
 Then the associated Coxeter group $W_{\Gamma}$ is defined by 
$$W_{\Gamma} := \langle s\in S: s^2 = 1,\;  (st)^{\mathrm{m}_{st}} = 1\; \text{if}\; \{s,t\}\in E(\Gamma) \rangle.$$
Graph products of groups were introduced by Green \cite{green1990graph} in her PhD thesis. She showed that for any graph product of groups $G_{\Gamma}$, if $1\ne g\in G_{\Gamma}$, $g$ admits a \textit{reduced expression} $g = g_{i_1}\ldots g_{i_k}$, where $1\ne g_{i_j}\in G_{s_{i_j}}$ for $j\in \{1,\ldots ,k\}$. Moreover, $s_{i_1}\ldots s_{i_k}$ is a reduced expression for an element $w\in W_{\Gamma}$. Note that in this case since $\Gamma$ is not labeled,  $\mathrm{m}_{st}$ is either $2$ or $\infty$, then  $W_{\Gamma}$ is a right-angled Coxeter group. 

 Recall that a \textit{full subgraph} of a simplicial graph $\Gamma = (S,E(\Gamma))$ is a subgraph $\Gamma'=(S',E(\Gamma'))$ 
 inheriting all the edges $\{s,t\}$ in $E(\Gamma)$ whenever $s,t\in S'$. In this case, $G_{\Gamma'}$ will be a subgroup of $G_{\Gamma}$.  
As explained in \cite[Lemma 3.20]{green1990graph}, graph products of groups are free products, direct products or amalgamated free products of groups.
We summarize this lemma in the following form.

\begin{lem}(\cite[Lemma 3.20]{green1990graph})
    Let $\Gamma = (S, E(\Gamma))$ be a  finite simplicial graph and $G_{\Gamma}$ be the graph product of the family $\{G_s: s\in S\}$. Then $G_{\Gamma}$ can be expressed as the following amalgamated free product:
    \begin{equation}\label{eq;graph-products-amalgmated prodcuts}
    G_{\Gamma} = (G_s\times G_{\Gamma_1})\ast_{G_{\Gamma_1}}G_{\Gamma_2},
    \end{equation}
    where  $s$ can be any vertex in $S$, $\Gamma_1$ is the full subgraph generated by the vertices adjacent to $s$, and $\Gamma_2$ is the full subgraph generated by $S\setminus \{s\}$.
\end{lem}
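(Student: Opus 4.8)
The plan is to establish the isomorphism through the universal properties of graph products and of amalgamated free products. Write $P := (G_s\times G_{\Gamma_1})\ast_{G_{\Gamma_1}}G_{\Gamma_2}$ for the right-hand side. Two preliminary observations make this expression meaningful. First, since $\Gamma_1$ is the full subgraph of $\Gamma$ on the set of vertices adjacent to $s$, it is in particular a full subgraph of $\Gamma_2$; by Green's normal form theorem (recalled just above), the inclusion of a full subgraph induces an injection of the corresponding graph products, so $G_{\Gamma_1}$ is canonically a subgroup of $G_{\Gamma_2}$, and it is of course the second factor of $G_s\times G_{\Gamma_1}$. Thus the amalgamated product $P$ is well-posed. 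Second, inside $G_{\Gamma}$ the subgroups $G_s$ and $G_{\Gamma_1}$ commute elementwise (every vertex of $\Gamma_1$ is adjacent to $s$) and meet only in the identity (again by the normal form theorem), so together they generate a copy of $G_s\times G_{\Gamma_1}$ in $G_\Gamma$.

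Next I would build a homomorphism $\phi\colon P\to G_\Gamma$. The inclusions $G_s\times G_{\Gamma_1}\hookrightarrow G_\Gamma$ and $G_{\Gamma_2}\hookrightarrow G_\Gamma$ restrict to one and the same map on $G_{\Gamma_1}$, namely the canonical inclusion $G_{\Gamma_1}\hookrightarrow G_\Gamma$; by the universal property of the amalgamated free product they glue to a homomorphism $\phi\colon P\to G_\Gamma$, which is surjective because its image contains every $G_t$, $t\in S$, and these generate $G_\Gamma$.

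In the opposite direction I would build $\psi\colon G_\Gamma\to P$ using the presentation $G_\Gamma=(\ast_{t\in S}G_t)/N$. Define $\psi$ on generators by sending $G_s$ into $P$ via $G_s\hookrightarrow G_s\times G_{\Gamma_1}\hookrightarrow P$, and sending $G_t$ for $t\neq s$ via $G_t\hookrightarrow G_{\Gamma_2}\hookrightarrow P$. It remains to verify that these maps respect the defining relations of $G_\Gamma$, i.e. that whenever $\{t,t'\}\in E(\Gamma)$ the images of $G_t$ and $G_{t'}$ commute in $P$. If $t,t'\neq s$, then $\{t,t'\}\in E(\Gamma_2)$ since $\Gamma_2$ is full, so $G_t$ and $G_{t'}$ already commute in $G_{\Gamma_2}\subseteq P$. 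If $t=s$, then $\{s,t'\}\in E(\Gamma)$ forces $t'$ to be a vertex of $\Gamma_1$, hence $G_{t'}\subseteq G_{\Gamma_1}$, and in $P$ the subgroups $G_s$ and $G_{\Gamma_1}$ commute since both sit inside the direct factor $G_s\times G_{\Gamma_1}$. Therefore $\psi$ is well-defined, and it kills $N$ by construction.

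Finally I would check that $\phi$ and $\psi$ are mutually inverse, which is immediate by evaluating both composites on the generating subgroups $G_t$, $t\in S$: on each $G_t$ both $\phi\circ\psi$ and $\psi\circ\phi$ restrict to the identity, so they are the identity on $G_\Gamma$ and on $P$ respectively. This yields $P\cong G_\Gamma$. The only genuinely substantial input is the injectivity statement for graph products of full subgraphs, together with the triviality of $G_s\cap G_{\Gamma_1}$ in $G_\Gamma$; this is exactly where Green's reduced-word/normal-form theorem is used, and it is the point on which the well-posedness of the right-hand side and hence the whole argument rests. Everything else is a formal diagram chase with the two universal properties.
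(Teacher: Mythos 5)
The paper gives no proof of this lemma at all: it is quoted verbatim as \cite[Lemma 3.20]{green1990graph} and used as a black box. Your universal-property argument is correct and is essentially the standard proof (and in substance the one in Green's thesis, which compares presentations): the only nontrivial inputs are exactly the ones you isolate, namely that full subgraphs induce injections of graph products and that $G_s$ and $G_{\Gamma_1}$ commute elementwise with trivial intersection, both of which follow from Green's normal form theorem recalled just above the lemma; the rest is the formal gluing of the two inclusions and the verification of the commutation relators, which you carry out correctly.
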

Note that when $s$ is an isolated vertex (there are no edges between $s$ and any other vertex in $S$), the form of $G_{\Gamma}$ in \eqref{eq;graph-products-amalgmated prodcuts} is the free product of two subgroups. 
When $\{s,t\}\in E(\Gamma)$ for any $t\in S$, the form in \eqref{eq;graph-products-amalgmated prodcuts} is the direct product of two subgroups.
We will illustrate this by giving  two canonical examples of graph products of groups in the following.

\begin{ex}
	(i) Consider $G_s = \mathbb{Z}_2$ for all $s\in S$. Then $G_{\Gamma} = W_{\Gamma}$ is the right-angled Coxeter group with presenting graph $\Gamma$.
	
	(ii) Consider $G_s = \mathbb{Z}$ for all $s\in S$. Then $G_{\Gamma} = A_{\Gamma}$ is the right-angled Artin group with presenting graph $\Gamma$:
	 $$A_\Gamma = \langle s\in S: (st)^2 = 1 \; \text{if}\; \{s,t\}\in E(\Gamma)\rangle.$$ 
    In particular, if for any $s,t\in S$, $\{s,t\}\in E(\Gamma)$, then the corresponding right-angled Artin group $A_\Gamma$ is a direct product of $\mathbb{Z}$. If $E(\Gamma)=\emptyset$, then $A_\Gamma$ is a free product of $\mathbb{Z}$.
\end{ex}

Let $\Gamma$ be any finite simplicial graph with vertex set $S = \{s_i: i\in I\}$, $(W_{\Gamma},S)$ be the right-angled Coxeter system with presenting graph $\Gamma$ and $G_{\Gamma}$ be any graph product of groups over $\Gamma$. As mentioned in \cite[Section 7.5]{thomas2018geometric}, 
one can construct a right-angled building $(G_{\Gamma}, \delta)$ of type $(W_{\Gamma}, S)$ with  $\delta: G_{\Gamma}\times G_{\Gamma} \to W_{\Gamma}$ defined by 
\begin{equation}\label{eq: Def for the distance on graph products}
   \delta(g, g') := s_{i_1}\ldots s_{i_k}, 
\end{equation}
where $g, g'\in G_{\Gamma}$ and $g^{-1}g'$ admits a reduced expression $g_{i_1}\ldots g_{i_k}$. One can check that this is a well-defined building by applying \cite[Theorem 3.9]{green1990graph}.
Consider the action of $G_{\Gamma}$  on the building $(G_{\Gamma}, \delta)$ by left multiplications, it is easy to see that this action is simply transitive on the set of chambers and preserves the distance $\delta$.


Fix a chamber $C_0:= 1\in G_{\Gamma}$. Let $u\in S$ be a generator (since $W_\Gamma$ is right-angled, $u$ can be chosen to be any generator in $S$) and $m$ be the multiplier on $G_{\Gamma}$ defined as in \eqref{eq: Def of symbol for actions on buildings} (whose algebraic form is \eqref{eq;Def of algebric form of m on buildings}). Then for any $1\ne h\in G_{\Gamma}$ with a reduced expression $h_{i_1}\dots h_{i_n}$, by the definition of $\delta$ on $G_{\Gamma}$ as above, we have $\delta(C_0,hC_0) = s_{i_1}\dots s_{i_n}$. Denote $s_{i_1}\dots s_{i_n}$
by $w$, then 
\begin{equation}\label{eq: symbol on G Gamma}
 m(h) =  \left\{\begin{array}{l}
		1, \quad \text{if} \; l(w)<l(uw)\\
		-1, \quad \text{if} \;  l(w)>l(uw),
	\end{array} \right.   
\end{equation}
or equivalently, 
\begin{equation}\label{eq: algebraic symbol on G Gamma}
 m(h) =  \left\{\begin{array}{l}
		1, \quad \text{if there is no reduced expression of $h$ starting with a letter in $G_u$} \\
		-1, \quad \text{if  there is a reduced expression of $h$ starting with a letter   in $G_u$}.
	\end{array} \right.   
\end{equation}

Now we will compare the multiplier $m$ defined on $G_{\Gamma}$ above  with that in \cite{gonzalez2022non-commutative} by \eqref{eq: Multiplier on trees} when viewing $G_{\Gamma}$ as an amalgamated free product through \eqref{eq;graph-products-amalgmated prodcuts}.
Recall that for any amalgamated free product $\ast_{j,A} G_j$, choose a right coset representative $E_j$ of $G_j$ modulo $A$, then
any element $g\in \ast_{j,A} G_j$ has a unique normal form
$$g = ag_{1}\dots g_{k},$$
where $a\in A$, and any $g_{\ell}$ for $1\leq \ell\leq k$ is in some $E_j$ such that $ g_{\ell},g_{\ell+1}$ are not in the same factor $E_j$. The multiplier $m'$ defined on the amalgamated free product $\ast_{j,A} G_j$ in \cite{gonzalez2022non-commutative} 
is as follows. Let $\mathbf{C}_j\in \mathbb{C}$. For any $g\in (\ast_{j,A} G_j)\setminus A$ with a unique normal form $ag_{1}\dots g_{k}$, 
$$m'(g) := \mathbf{C}_j,$$
if $g_1\in G_j\setminus A$; for any $g\in A$,  $m'(g) := \mathbf{C}_{j_0}$ where $j_0$ can be any index in the product. 


\begin{prop}
    Let $(G_{\Gamma},\delta)$, $C_0$, $m$ and $m'$ be the same as above. Then two multipliers $m$ and $m'$ on $G_\Gamma$ coincide.
\end{prop}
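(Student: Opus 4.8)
The plan is to realize the building multiplier $m$ as a special instance of the amalgamated free product multiplier $m'$ of \cite{gonzalez2022non-commutative}, using the decomposition \eqref{eq;graph-products-amalgmated prodcuts} with the chosen vertex equal to $u$. Write $G_1:=G_u\times G_{\Gamma_1}$, $G_2:=G_{\Gamma_2}$ and $A:=G_{\Gamma_1}$, so that $G_\Gamma=G_1\ast_A G_2$ with $A\le G_2$ the subgroup generated by the vertices of $\Gamma_1$, i.e.\ the vertices adjacent to $u$. First I would pin down the free constants in $m'$ by setting $\mathbf{C}_1=-1$ (the index of the factor that contains $G_u$) and $\mathbf{C}_2=1$, and, for $g\in A$, resolving the ambiguity in the definition of $m'$ by taking $j_0=2$; with these choices $m'$ becomes a $\{1,-1\}$-valued symbol, which is the only way it could possibly agree with $m$, since by \eqref{eq: algebraic symbol on G Gamma} the symbol $m$ takes values $\pm1$.

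The heart of the proof is a translation between the two normal forms. For $h\in G_\Gamma$ with amalgamated normal form $h=a\,g_1\cdots g_k$, where $a\in A$ and the $g_\ell$ alternate between $G_1\setminus A$ and $G_2\setminus A$, I \emph{claim}: $h$ admits a reduced expression (in the sense of Green's theory of graph products, \cite{green1990graph}) starting with a letter of $G_u$ if and only if $k\ge 1$ and $g_1\in G_1\setminus A$. Granting the claim, the proposition follows by a short case check. If $h\in A$ then, since $\Gamma_1$ is a full subgraph not containing $u$ and $G_{\Gamma_1}\le G_\Gamma$ inherits its reduced expressions, no reduced expression of $h$ involves a letter of $G_u$, whence $m(h)=1=\mathbf{C}_2=m'(h)$. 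If $h\notin A$ and $g_1\in G_1\setminus A$, then $m'(h)=\mathbf{C}_1=-1$ and the claim gives $m(h)=-1$. If $h\notin A$ and $g_1\in G_2\setminus A$, then $m'(h)=\mathbf{C}_2=1$ and the contrapositive of the claim gives $m(h)=1$. So $m=m'$.

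To prove the claim I would use two structural facts available from \cite{green1990graph}: (a) in a reduced expression one may freely transpose two adjacent syllables lying in commuting vertex groups, and inside $G_\Gamma$ the subgroup $G_u$ commutes with exactly $G_{\Gamma_1}=A$; and (b) groups of full subgraphs embed in $G_\Gamma$ with inherited reduced expressions, so that $G_1$ really is the direct product $G_u\times G_{\Gamma_1}$ inside $G_\Gamma$ and $A=\{1\}\times G_{\Gamma_1}$. For the direction ``$\Leftarrow$'': if $g_1\in G_1\setminus A$, write $g_1=\gamma_u\gamma$ with $1\ne\gamma_u\in G_u$ and $\gamma\in G_{\Gamma_1}$; since $\gamma_u$ commutes with $a,\gamma\in A$ one gets $h=\gamma_u\cdot\big((a\gamma)g_2\cdots g_k\big)$, and using (b) one checks that deleting the leading $\gamma_u$ strictly decreases the length, so appending a reduced expression of $(a\gamma)g_2\cdots g_k$ after $\gamma_u$ produces a reduced expression of $h$ starting with a letter of $G_u$. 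For ``$\Rightarrow$'': if some reduced expression of $h$ begins with $1\ne x\in G_u$, then shuffling $x$ to the right past the $A$-prefix (by (a)) shows that either $h\in A$ (excluded) or the leading non-$A$ syllable of $h$ lies in $G_1$ — for if it lay in $G_2\setminus A$ it would, by (b), contain a letter from a vertex not adjacent to $u$, which cannot be transposed past $x$, contradicting that $x$ sits at the left of a reduced expression.

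The main obstacle I anticipate is purely combinatorial: making the shuffling argument in the claim watertight, that is, showing the $G_u$-content of $h$ can be collected at the left end of a reduced expression precisely when every preceding syllable lies in $A=G_{\Gamma_1}$, and checking the length bookkeeping so that the amalgamated normal form and Green's reduced expressions interact as expected. Once the claim is established the remainder is the three-line case distinction above with the constants $\mathbf{C}_1=-1$, $\mathbf{C}_2=1$.
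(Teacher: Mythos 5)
Your proposal is correct and follows essentially the same route as the paper: both use the decomposition $(G_u\times G_{\Gamma_1})\ast_{G_{\Gamma_1}}G_{\Gamma_2}$, normalize the constants to $\mathbf{C}_1=-1$, $\mathbf{C}_2=1$, and reduce everything to the key combinatorial fact that the leading non-$A$ syllable of the amalgamated normal form lies in the $G_u$-factor exactly when some reduced expression of the element (in Green's sense) begins with a letter of $G_u$, proved via the commutation of $G_u$ with $G_{\Gamma_1}$ and the freeness of $G_u$ from the remaining vertex groups. Your write-up is a bit more explicit about the claim and the length bookkeeping, but the underlying argument is the one in the paper.
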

\begin{proof}
    Let $s$ be a vertex of $\Gamma$. Let 
     $T_s:= \{t\in S: \mathrm{m}_{ts} =2\}$. Consider the decomposition \eqref{eq;graph-products-amalgmated prodcuts} of $G_\Gamma$, it is easy to see that $G_{\Gamma_1} = G_0 :=\{g\in G_{\Gamma}: \delta(C_0,gC_0)\in W_{T_s}\}$. 
     Fix a right coset representative $E_2$ of $G_{\Gamma_2}$ modulo $G_0$. For any $g\in G_{\Gamma}$, 
    consider its unique normal form in $(G_s\times G_0)\ast_{G_0}G_{\Gamma_2}$ as an element in the amalgamated free product, we have 
     $g = ag_{1}\dots g_{k}$, where $a\in G_0$ and $g_{\ell}$ is either in $G_s$ or $ E_2$ with $1\le \ell\le k$. Then $m'$ can be rewritten as   
     $$m'(g) =  \left\{\begin{array}{l}
		1, \quad \text{if} \; g_{1}\in E_2\\
		-1, \quad \text{if} \;  g_{1}\in G_s,
	\end{array} \right.$$
    and $m'(g)=1$ if $g\in G_0$. 
    Since $W_\Gamma$ is right-angled, $\mathrm{m}_{st} \in\{2, \infty\}$ for any $s\ne t\in S$, then $G_{\Gamma_1}=G_0$ is the centralizer of $G_s$ in $G_\Gamma$, and $G_{\Gamma_2}=G_{\Gamma_1} \cup G_2$, where $G_2$ consists of any element which is free with all the elements in $G_s$. 
It is straightforward to see that elements in $E_2$ and that in $G_s$ are free. Now we are ready to compare $m$ and $m'$ on $G_\Gamma$. 
   Let $g\in G_{\Gamma}$. Suppose that $m(g)=-1$. By \eqref{eq: algebraic symbol on G Gamma}, we know that there is a reduced expression $g_{i_1}\dots g_{i_n}$ of $g$ such that $g_{i_1}\in G_s$. Note that for any $t\neq s$ in $S$, we have $G_t\subseteq G_{\Gamma_2}$. Therefore, the previous reduced expression of $g$ automatically gives rise to a  normal form of $g$ as an element in the amalgamated free product $(G_s\times G_0)\ast_{G_0}G_{\Gamma_2}$, and this normal form starts with a letter in $G_s$. By the uniqueness of normal forms in amalgamated free products, we conclude that ${m}'(g)=-1$. On the other hand, suppose that $m'(g)=-1$, i.e.
   $g$ admits a unique normal form in $(G_s\times G_0)\ast_{G_0}G_{\Gamma_2}$ with the form $ag_{1}\dots g_{k}$ and $g_1\in G_s$. By writing each $g_\ell$ ($1<\ell\leq k$) into a reduced expression in $G_\Gamma$, we obtain a reduced expression (not unique) of $g$ in $G_\Gamma$. Since elements of $E_2$ and that in $G_s$ are free, we can not use the relators in $G_\Gamma$ to replace the starting letter $g_1$ by a letter not in $G_s$. Hence, $m(g)=-1$. 
   Therefore, we conclude that $m$ and $m'$ coincide on $G_\Gamma$.
\end{proof}

\begin{rem}
    When strengthening the group action from isometries to strong transitivities (see \cite[Definition 6.1]{abramenko2008buildings}) on buildings, the group also admits a non-trivial action on an infinite tree. To be more precise, it was shown in \cite[Proposition 2.1]{dymara2002cohomology} that for any closed subgroup $G$ of finite covolume in ${\rm Aut}(\Delta)$, if the action of $G$ on $\Delta$ is strongly transitive, then it admits actions on an infinite tree without fixed points.
In this case, the problem of defining Hilbert transforms on these groups 
reduces to the case covered in \cite{gonzalez2022non-commutative}. Therefore, to find examples of groups which admit actions on buildings satisfying the nested condition but does not admit non-trivial actions on trees, one can try to produce examples that do not act strongly transitively on buildings.
\end{rem}

\section*{Acknowledgements}
The authors gratefully acknowledge the valuable discussions with Graham Niblo about Coxeter groups. Thanks are also due to Max Carter for pointing out the result in \cite{dymara2002cohomology} mentioned in the last remark.

\bibliographystyle{plain}
\bibliography{nonc-Cotlar}
\end{document}